\newcommand{\CC}{\mathbb{C}}
\newcommand{\RR}{\mathbb{R}}
\newcommand{\NN}{\mathbb{N}}
\newcommand{\ZZ}{\mathbb{Z}}
\newcommand{\EE}{\mathbb{E}}
\newcommand{\PP}{\mathbb{P}}
\newcommand{\dd}{\,\mathrm{d}}
\newcommand{\iid}{\overset{iid}{\sim}}
\newtheorem{thm}{Theorem}
\newtheorem{Cor}[thm]{Corollary}
\newtheorem{prop}[thm]{Proposition}
\newtheorem{eg}[thm]{Example}
\newtheorem{Lem}[thm]{Lemma}
\newtheorem{rem}[thm]{Remark}
\newcommand{\bds}{\begin{displaystyle}}
\newcommand{\eds}{\end{displaystyle}}
\newcommand{\bpm}{\begin{pmatrix}}
\newcommand{\epm}{\end{pmatrix}}
\newcommand{\bvm}{\begin{vmatrix}}
\newcommand{\evm}{\end{vmatrix}}
\newcommand{\chaptersubheading}[1]{%
  {\parindent0pt\vspace*{-25pt}%
  \linespread{1.1}\large\scshape#1%
  \par\nobreak\vspace*{35pt}}
  \@afterheading%
}
\newif\ifhideproofs
\def\cblu{\color{blue}}
\numberwithin{equation}{section}
\numberwithin{thm}{section}
\title{Symmetry: a General Structure in Nonparametric Regression}
\author[1]{Louis G. Christie}
\author[1]{John A. D. Aston} 
\affil[1]{Statistical Laboratory, University of Cambridge}
\begin{document}

\maketitle

\begin{abstract}
In this paper we present the framework of symmetry in nonparametric regression. This generalises the framework of covariate sparsity, where the regression function $f : [0,1]^d \rightarrow \RR$ depends only on at most $s < d$ of the covariates, which is a special case of translation symmetry with linear orbits. In general this extends to other types of functions that capture lower dimensional behavior even when these structures are non-linear. We show both that known symmetries of regression functions can be exploited to give similarly faster rates, and that unknown symmetries with Lipschitz actions can be estimated sufficiently quickly to obtain the same rates. This is done by explicit constructions of partial symmetrisation operators that are then applied to usual estimators, and with a two step $M$-estimator of the maximal symmetry of the regression function. We also demonstrate the finite sample performance of these estimators on synthetic data. 
\end{abstract}

\section{Introduction}

In nonparametric regression, the dimension of the covariate space has a large impact on the ability to estimate the regression function. For example, it is well known that the minimax rate over the class $\mathcal{F}(L, \beta) \subseteq L^2( [0,1]^d )$ of $(L, \beta)$-H\"{o}lder functions in  (defined in section \ref{ssec:stat_problem}) is:
\begin{equation}    
    \label{eq:curse_of_dim}
    \inf_{\hat{f}} \sup_{f \in \mathcal{F}( L, \beta )} \EE( \| \hat{f} - f \|_2^2 ) \geq C n^{-\frac{2 \beta }{2\beta + d} } 
\end{equation}
for some positive constant $C$, where the infimum is taken over all estimators $\hat{f}$ using data $\{ (X_i, f(X_i) + \epsilon_i ) \}_{i =1 }^n$ with $\epsilon \iid N(0, \sigma^2)$ independent of $X_i \iid U( [0,1]^d )$. This rate is achieved for many estimators in many problem contexts, for example local polynomial regressors \citep{stone1982optimal}, projection estimators \citep{rice1984bandwidth}, or well designed neural networks (up to log factors) \citep{schmidt2020nonparametric}. The rate means that number of samples to bring the expected error below $\varepsilon > 0$ is exponential in the dimension; a problem known as the \textit{curse of dimensionality}. \\
\\
To overcome this we usually either make assumptions on the structure of $f$, e.g. that it depends only on $s < d$ of the covariates (i.e., $f$ is \textit{$s$-sparse}), or on the distribution of the data, e.g., that $\mu$ is concentrated around a $k < d$ dimensional sub-manifold of the covariate space. When true these assumptions allow for much faster estimation, as is done by the RODEO in \cite{lafferty2008rodeo}. However, the sparsity and manifold hypotheses are very restrictive. For example, the regression function can depend on a non-linear function of the covariates, such as the height of a ripple depending on $\| X \|_2$ and thus all $d$ of the covariates. When these assumptions do not hold they either introduce asymptotic bias or revert to the baseline rate governed by \ref{eq:curse_of_dim}. Thus there is significant benefit in generalising these structural assumptions to achieve faster rates in more problem contexts. \\
\\
In this paper we present a more general framework: \textit{symmetry}. Symmetries are ubiquitous across statistics in fields ranging from time series obeying seasonality to rotation invariant protein volume estimation \citep{jiang2017atomic}. In regression, symmetry is taking hold across literature in neural networks \citep{kondor2018generalization, bronstein2021geometric}, and generalises several structures that are well studied such as covariate sparsity and multi-index models (see  examples \ref{eg:cov_sparsity} and \ref{eg:multi_ind_models} below). Lie groups were initially conceived to study the properties of differential equations \citep{hawkins2012emergence}; now in statistics we have an opportunity to use these mathematical tools to better understand problems in regression and beyond. \\
\\
The key idea is that if $G$ is a set of differentiable bijections $\phi: \mathcal{X} \rightarrow \mathcal{X}$ of the $d$-dimensional covariate space $\mathcal{X}$,  and the regression function $f : \mathcal{X} \rightarrow \RR$ is $G$-invariant, i.e., it obeys the invariance rule $f(x) = f( \phi(  x) )$ for all $\phi \in G$ and $x \in \mathcal{X}$, then we can use estimates of each $f( \phi(x) )$ for $\phi \in G$ to construct a lower variance estimator of $f(x)$ without introducing bias. We call the set $\{ \phi(x) : \phi \in G \}$ the \textbf{orbit} of $x$ under $G$, written $[x]_G$,  and when this set is smooth and has positive dimension\footnote{the principle orbit theorem ensures that almost all orbits $[x]_G$ have the same dimension, see section \ref{ssec:group_theory}.} $d_G$ we might hope that the new estimate has a worst case risk decay with $d$ replaced by $d - d_G$. \\  
\\
To construct an estimator with the rate corresponding to the lower dimensional space, we apply a \textit{symmetrisation operator} to any existing estimator $f_n$ satisfying some optimality conditions akin to local polynomial regressors. For example, if $\phi_i \iid U( G )$ then we take our estimate of $f(x)$ as $\tilde{f}_n = m^{-1} \sum_{ i =1}^m f_n (\phi_i (x) )$. In an ideal world, where each estimate is guaranteed to be independent, this would cut the variance by a factor of $m$. When $f$ is $G$-invariant, it does so without introducing bias. This is the situation in figure \ref{fig:sym_eg}. The first results of this paper show that this is true even when dependence considerations are taken into account, and quantifies the asymptotic bias introduced when $f$ is not $G$-invariant. This allows us to re-optimise the bias-variance trade-off and achieve faster convergence to $f$. \\ 

\begin{figure}[h]
    \centering
    \begin{tabular}{m{5cm} m{5cm} m{5cm}}
    \centering
    \includegraphics[scale = 0.45]{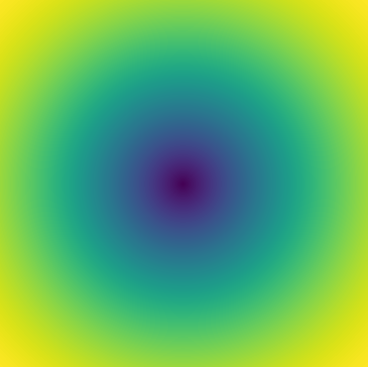}
    & 
    \centering   
    \begin{tikzpicture}[scale = 0.7]
        \draw (-3,-3) -- (3,-3) -- (3,3) -- (-3,3) -- (-3,-3);
        
        \draw[fill] (2,0) circle (0.07cm) node[right]{$x$};
        \draw[dashed] (2,0) circle (1);
        \draw[fill, blue] (2.3,0.3) circle (0.07cm) node[right] {$X_3$};
        \draw[fill, red] (0.4, 1.7) circle (0.07cm) node[right] {$X_1$};
        \draw[fill, red] (0.6, 0.3) circle (0.07cm) node[right] {$X_2$};
        \draw[fill, red] (2.1, 2.2) circle (0.07cm) node[right] {$X_4$};        
    \end{tikzpicture} 
    &
    \centering
    \begin{tikzpicture}[scale = 0.7]
        \draw (0,0) circle (2);

        \draw (-3,-3) -- (3,-3) -- (3,3) -- (-3,3) -- (-3,-3);

        \draw[dashed] (0,2) circle (0.5);
        \draw[dashed] (1.732,1) circle (0.5);
        \draw[dashed] (2,0) circle (0.5);
        \draw[dashed] (1.732,-1) circle (0.5);
        \draw[dashed] (0,-2) circle (0.5);

        \draw[fill] (0,2) circle (0.07cm) node[above right]{$\phi_1( x )$};
        \draw[fill] (1.732,1) circle (0.07cm) node[above right]{$\phi_2( x )$};
        \draw[fill] (2,0) circle (0.07cm) node[right]{$x = \phi_0(x)$};
        \draw[fill] (1.732,-1) circle (0.07cm) node[below right]{$\phi_3( x )$};
        \draw[fill] (0,-2) circle (0.07cm) node[below right]{$\phi_4( x )$};

        \node at (-2.5,0) {$[x]_G$};
        
        \draw[fill, blue] (2.3,0.3) circle (0.07cm) node[above right] {$X_3$};
        \draw[fill, blue] (0.4, 1.7) circle (0.07cm) node[right] {$X_1$};
        \draw[fill, red] (0.6, 0.3) circle (0.07cm) node[right] {$X_2$};
        \draw[fill, red] (2.1, 2.2) circle (0.07cm) node[right] {$X_4$};          
    \end{tikzpicture} 
    \tabularnewline
    \centering (a) Regression Function & 
    \centering (b) Local Polynomial Estimation &
    \centering (c) Symmetrised LPE 
    \end{tabular}

    \caption{     \label{fig:sym_eg} Sub-figure (a) shows a heatmap of a regression function $f : \RR^2  \rightarrow \RR$. Sub-figure (b) depicts the dashed closed ball of radius $h$ around the point $x \in \mathcal{X}$, the data inside which are used to estimate $f(x)$ with a local polynomial estimator $f_n$ with a kernel with support contained in this ball. In this case it is $X_3$ but not $X_1$, $X_2$, nor $X_4$. Sub-figure (c) depicts our estimator which symmetrises with respect to $G = \{ \text{Rotations around } 0 \}$, using multiple balls of smaller radius spaced around the orbit $[x]_G$. In this case the data at both $X_1$ and $X_3$ is used to estimate $f(x)$ with $5^{-1} \sum f_n( \phi_i (x ) )$, but not $X_2$ nor $X_4$.  }

\end{figure}

However, if the symmetry $G$ is not known, we must instead seek to estimate it from the data. Specifically, if $G$ is a large set of possible transformations, then can we estimate the maximal subset $G_0 \subseteq G$ for which $f$ is $G_0$-invariant? If $G_0$ can be estimated sufficiently quickly, then an estimate that uses this symmetry could adapt to the symmetry of $f$, and provide rates of the form:
\begin{equation}
    \label{eq:adapt_rates}
    \sup_{f \in \mathcal{F}(L, \beta)} \EE( \| f_{n, \hat{G}} - f \|_2^2 ) n^{\frac{2 \beta}{2 \beta + d - d_{G_0} }} ) \leq C
\end{equation}
where $f_{n, \hat{G}}$ is an estimator symmetrised with the estimated transformations. This situation is the focus of this paper: we give a (two-step) $M$-estimator\footnote{In the sense of \cite{van2000asymptotic}, chapter 5.} of $G_0$ and show that using this estimated symmetry can achieve adaptive rates of the form \ref{eq:adapt_rates}. \\
\\
This is done through understanding the set $G$ as a \textit{group}, a mathematical object used to describe symmetries. We can do this because if $f$ is $G$-invariant in the sense above then $f$ is also invariant to the sets of inverses $\{ \phi^{-1} : \phi \in G \}$ and compositions $\{ \phi \circ \psi : \phi, \psi \in G \}$. This description is used to reduce the search space from the set of subsets of $G$ to the much smaller set of closed, connected subgroups of $G$, and to keep the language in this paper consistent with the machine learning and mathematical literature. We have aimed to keep the level of group theory background required to a minimum in this paper, introducing any concepts needed. Most of the proofs rely only on elementary probability theory and linear analysis, and on the geometric properties of the orbits. We now demonstrate how this generalises many usual structures in nonparametric regression, we now give several examples. 

\begin{eg}[Covariate Sparsity]
    \label{eg:cov_sparsity}
    Suppose that $\mathcal{X} = \RR^p$ and let $\mathcal{G}$ be the set of translations $\{ \phi_g : x \mapsto x + g \}$. Suppose that the regression function $f : \mathcal{X} \rightarrow \RR$ is $s$-sparse, i.e., there are $(p-s)$ coordinate positions $i$ that $f$ does not depend on, so $f( x + a e_i ) = f(x)$ for all $a \in \RR$ and standard basis vectors $e_i$ for these positions $i$. This is precisely the definition of invariance to the subgroups of $\mathcal{G}$ given by the $\RR$-span of $\phi_{e_i}$. 
\end{eg}

\begin{eg}[Multi-Index Models]
    \label{eg:multi_ind_models}
    In the context of the previous example, there are of course many other subgroups of $\mathcal{G}$ that $f$ could be invariant to, for example translations by the hyperplane orthogonal to the vector $\mathbf{1}_d = (1, \dots, 1)$. A function invariant to this subgroup must be of the form $f(x_1, \dots, x_d) = f( x_1 + \cdots + x_d )$, as the function's value at off diagonal inputs are completely determined by value at the projection to the line through $\mathbf{1}_d$. Thus functions that are invariant to such subgroups are examples of the multi-index structure $f(x) = h( Tx )$ for a orthogonal projection $T : \RR^d \rightarrow \RR^m$ as described in \cite{hristache2001structure}, in this case with $T = \mathbf{1}_d^T$.
\end{eg}

\begin{eg}[Compact Sparsity]
    \label{eg:compact_sparsity}
    The previous example is not an example of a \textbf{compact} group action, but is very related to this example where we consider the compact domain $[0,1]^d$ rather than $\RR^d$. Now consider the action of the $d$-torus $\mathbb{T}^d= \{ \phi_g : g \in [0,1)^d \}$ given by translations modulo the integer grid:
    \begin{equation}
        \phi_g(x) = ( g_1 + x_1 - \lfloor g_1 + x_1 \rfloor, \dots, g_d + x_d - \lfloor g_d + x_d \rfloor ) 
    \end{equation}
    for all $x \in [0, 1)^d$, and $\phi_g(x) = x$ for all other $x$. In this case continuous functions that are invariant to the span of $\phi_{e_i}$ are also sparse, in the sense that they do not depend on the variable in the $i^{th}$ position. The only difference to the previous examples is now the group $\mathbb{T}^d$ is compact (note that the topology on this set identifies the boundaries, so it is homeomorphic to a product of two circles), which can make the search for the maximal subgroup computationally easier.
\end{eg}

\begin{eg}[3D Rotations]
\label{eg:3D_rots}
Suppose that $\mathcal{X} = \overline{ B_{\RR^3}(0, 1) }$, the closed unit ball in $\RR^3$. Let $\mathcal{G} = SO(3) = \{ \text{rotations around 0} \, \}$. Any function $f$ that is invariant to $\mathcal{G}$ depends only on $\| x \|_2$, such a function is not covariate sparse but does only depend on a lower dimensional projection---a non-linear one in this case. If $f$ is invariant to the subgroup of rotations around the $x$ axis, denoted $S^1_{x}$, then $f$ depends only on $x_1$ and $\sqrt{x^2_2 + x^2_3}$, a two dimensional non-linear projection. This also generalises to higher dimensions, where the groups $SO(d)$ refer to rotations around the origin in $d$-dimensions. 
\end{eg}

\subsection{Related Work}

\subsubsection{Using Symmetries in Regression}
The machine learning literature has recently moved to explicitly consider the framework of symmetry. In particular, convolutional neural networks are know to be prime examples of this, utilising translation invariance of images \citep{kondor2018generalization}. A good overview of symmetry-based methods is found in \cite{bronstein2021geometric}, where translation invariant convolutional neural networks or permutation invariant graph neural networks are discussed, along several other example. The main methods for incorporating symmetry in this literature, are:
\begin{enumerate}[1)]
    \item (\textit{Data augmentation}) constructing a new data set of points $( g_{ij} \cdot X_i, Y_i)$ with $j \in \{ 1, \dots, J_i \}$ transformations around the group and estimating using this data set;
    \item (\textit{Data projection}) projecting the covariates $X_i$ to the quotient space $\mathcal{X} / G$ and doing regression of $Y_i$ on the covariates $[X_i]_G$;
    \item (\textit{Feature averaging}) using a symmetrisation operator to average the estimated function $f_n$ over the orbits; and
    \item (\textit{Kernel symmetrisation}) replacing a kernel $k$ with a $G$-invariant kernel $k_G$, in for example kernel ridge regression or local polynomial estimation. 
\end{enumerate}
Theoretical work to analyse these tools has only recently been undertaken. \cite{lyle2020benefits} showed that feature averaged estimators are almost surely better performing, a consequence of the fact that $S_G$ is an orthogonal projection (under our conditions on the group action and when $\mu$ is $G$-invariant) and so $\| f - S_G f_n \|_2 \leq \| S_G \| \| f - f_n \|_2$. \cite{elesedy2021provably} later strengthen this for the case of Kernel Ridge regression to show that the performance improvement is strictly positive, but their bound decays as $O(n^{-1})$ so does not guarantee dimension reduction. \cite{bietti2021sample} showed that if $G$ is finite then the risk of a kernel symmetrised kernel ridge regressor converges to $1 / |G|$ of the risk of the original estimator, with a new choice of hyper-parameters. \cite{mei2021learning} examined the performance of kernel symmetrisation on kernel ridge regressors in under- and over-parametrised regimes. \cite{huang2022quantifying} have shown that in some cases data augmentation can introduce unintended variance depending on the estimator and the data distribution. The first works on dimension reduction in this case are \cite{tahmasebi2023exact, tahmasebi2023sample}, which examine how known symmetries affect the estimation of regression functions using symmetrised kernel ridge regressors and the estimation of distributions with projection estimators respectively.

\subsubsection{Estimating Symmetries}
Efforts to estimate symmetries are considerably more limited. Some methods include \citet{cubuk2019autoaugment, lim2019fast, benton2020learning}. These methods focus only on subsets of the same dimension of the group. Moreover, little if any attention has been given to the statistical properties of these methods. \citet{christie2023estimating} provides a framework for statistical inference of the maximal subgroup of a regression function, but rates of convergence are not known for this estimator. It is not known whether it is possible to estimate the true maximal symmetry $G$ fast enough to achieve the desired dimension reduction. Other works have consider the idea of testing whether a specific symmetry applies to distributions \citep{garcia2020optimal, huang2023multivariate, chiu2023hypothesis} (including conditional distributions, which is nearly the same situation as the regression problem) but not the problem of estimating the largest one.

\subsection{Main Contributions}

We now state the results in this paper in technical language. All terms with definitions contained in section \ref{sec:background} are italicised here. Suppose that $\mathcal{X}$ is a closed subset of $\RR^d$ with non-trivial Lebesgue volume, or any $d$-dimensional \textit{orientable Riemannian manifold}, and let $\mathcal{G}$ be a locally compact \textit{Lie group acting smoothly, faithfully}, and \textit{properly} on $\mathcal{X}$. Let $\mathcal{F}(L, \beta)$ be the \textit{H\"{o}lder class} of functions on $\mathcal{X}$. Suppose that we have collected independent and identically distributed (iid) data $\mathcal{D} = \{ (X_i, Y_i) \}_{i = 1}^n$ with $Y_i = f(X_i) + \epsilon_i$ where $\EE( \epsilon_i ) = 0$ and $\mathrm{Var}( \epsilon_i ) = \sigma^2$, and with $X_i$ and $\epsilon_i$ independent. Suppose that $f_n$ is a rate optimal estimator over the hypothesis class $\mathcal{F}(L, \beta)$, such as a local polynomial regressor of degree $\ell = \lceil \beta \rceil - 1$. Let $S_\rho : L^2(\mathcal{X}) \rightarrow L^2( \mathcal{X})$ be the \textit{partial symmetrisation operator} given by the orbit average $S_\rho f = \EE( f( g \cdot x) )$ when $g \sim \rho$ for any distribution $\rho$ on the group $\mathcal{G}$. For each $f \in \mathcal{F}$, let $G_{\max}(f, \mathcal{G})$ be the unique \textit{maximal invariant subgroup} of $f$. In this paper:

\begin{enumerate}[1)]
\item (Sections \ref{ssec:par_sym_ests} and \ref{ssec:point_wise_risk}) we construct \textbf{Partially Symmetrised Estimators} $S_{\rho_G} f_n$ using specific deterministic discrete distributions $\rho(G)$ on each closed subgroup $G \leq \mathcal{G}$ for which we obtain pointwise bounds for all $x \in \mathcal{X}$: 
\begin{equation}
    \label{eq:G_fast_rates}
    \sup_{f \in \mathcal{F}( L ,\beta) } \EE\big( ( S_{\rho(G)} f_n (x) - S_{\rho(G)} f(x) )^2 \big) \leq C_{G,x} n^{- \frac{2\beta}{2 \beta + d - d^G } }
\end{equation}
for positive constants $C_{G,x}$ that do not depend on $f$ or $n$, and where $d^G$ is the dimension of a principle orbit of $G$; 
\item (Section \ref{ssec:integrated_risk}) we show that when $\Omega \subseteq \mathcal{X}$ is compact and $\mu$-measurable we also obtain the integrated risk bound 
\begin{equation}
    \sup_{f \in \mathcal{F}( L ,\beta) } \EE\big( (S_{\rho(G)} f_n (X) - S_{\rho(G)} f(X) )^2 \mid X \in \Omega \big) \leq C_{G, \Omega} n^{- \frac{2\beta}{2 \beta + d - d^G } }
\end{equation}
where $C_{G, \Omega} = \EE( C_{G, X} \mid X \in \Omega ) < \infty$ for $X \sim \mu$;
\item (Section \ref{sssec:compact_groups_comps}) we show that equation (\ref{eq:G_fast_rates}) holds when $\rho(G)$ is replaced by the uniform distribution $U(G)$ for compact groups $G$, and with a the Monte-Carlo estimate with $\{ g_i \}_{i = 1}^n$ sampled uniformly from the group, when $\mu$ is $G$-invariant. This means in practice we do not need the explicit distributions $\rho_G$ for these groups. As a special case, when $f$ is $G$-invariant this recovers the rates of \cite{tahmasebi2023exact} as a corollary, though in a slightly different context; 
\item (Section \ref{ssec:subgroup_space}) we develop a metric structure on the set of closed subgroups $K(\mathcal{G}) = \{ \overline{G} : G \leq \mathcal{G} \}$, and show that the asymptotic bias of the partially symmetrised estimators $S_{\rho(G)} f_n$ is locally H\"{o}lder continuous in this metric, when the distributions $\rho(G)$ each has compact support contained in $G$; 
\item (Section \ref{ssec:global_sym_rates}) we show that when $\Omega \subseteq \mathcal{X}$ is compact closure of an open subset we can compute an \textbf{Error Minimising Symmetry} $\hat{G}$, an estimator of $G_{\max}(f, \mathcal{G})$, and that the \textbf{Best Symmetric Estimator} $S_{ \rho(\hat{G})} f_n$ achieves the adaption property:
\begin{equation}
   \sup_{f \in \mathcal{F}( L ,\beta) } \EE\big( (S_{ \rho(\hat{G})} f_n (X) - f (X) )^2 \mid X \in \Omega \big) n^{ \frac{2\beta}{2 \beta + d - d^{G_{\max}(f, \mathcal{G}) } } } \leq \sup_{G \in K(\mathcal{G} )} 2 C_{G, \Omega} + 2 + \tfrac{ 2\beta (2 C_{G, \Omega} + 4 L ) }{\mu( \Omega) ( 2 \beta + d ) } 
\end{equation}
achieving the goal described in equation (\ref{eq:adapt_rates}). This requires that the action of $\mathcal{G}$ is Lipschitz with respect to the chosen distance on the group. 
\end{enumerate}
Each result here assumes mild and standard conditions on the base estimator $f_n$ and on the group action, which are given explicitly in the assumption sets (E) and (Q) later. The key assumptions of the problem are that the distribution $\mu$ is not concentrated around the singularities of the action, that $\beta$ is at least $\dim [x]_\mathcal{G} / 2$, and that we have a baseline estimator $f_n$ that is sufficiently local, in the same way local polynomial estimators using kernels with compact support are, and minimax optimal for the class $\mathcal{F}( L, \beta) $.

\subsection{Other Notation}

In a metric space $(\mathcal{X}, d_\mathcal{X})$ we use $B_\mathcal{X}(x, h)$ to refer to the open ball of radius $h$, i.e., $B_\mathcal{X}( x, h) = \{ y \in \mathcal{X} : d_{\mathcal{X}}(x,y) < h \}$. We use $\lceil x \rceil$ to mean the smallest integer larger than $x$, i.e., the ceiling function, and $\lfloor x \rfloor$ to be the largest integer not more than $x$, i.e., the floor function. We use $\mathbf{1}_{x \in A}$ for the indicator function on the set $A$.  For any set $A$ we use $|A|$ to refer to it's cardinality. When $x$ is a vector we use $\| x \|_2 = \sqrt{ x_1^2 + \cdots + x_d^2 }$ as its 2-norm, and similarly for functions in $L^p$-spaces we use $\| f \|_p = ( \int f^p )^{1/p}$. We use denote the topological closure of $A$ by $\overline{A}$. We use $\overset{D}{=}$ to mean equality in distribution. We often use the notation $\phi_n^{\beta, d} = n^{- \frac{2\beta}{2\beta + d } }$ to make the rates later more notationally compact.
\section{Background}
\label{sec:background}

\subsection{The Statistical Problem}
\label{ssec:stat_problem}
 Let $\mathcal{X}$ be the closure of any open subset of $\RR^d$, or any other orientable smooth $d$-dimensional Riemannian manifold (defined in section \ref{ssec:diff_geo}) such as the unit sphere $S^d = \{ x \in \RR^{d + 1} : \|x\|_2 = 1\}$. Let $\mu$ be any Borelian probability measure (with respect to the topology on $\mathcal{X}$) that has $\mathrm{supp}(\mu) = \mathcal{X}$. Suppose we collect data $\mathcal{D} = \{ (X_i, Y_i) \}_{i = 1}^n \subseteq \mathcal{X} \times \RR$, with
 \begin{equation}
     Y_i = f(X_i) + \epsilon_i
 \end{equation}
 for some $f \in L^2(\mathcal{X}, \mu)$ and independent and identically distributed (iid) mean $0$ noise variables $\epsilon_i$ that are independent of $\mathcal{D}_X = \{ X_i \}_{i = 1}^n$ and have finite variance. Our goal will always be to estimate $f$ from within a hypothesis class $\mathcal{F} \subseteq L^2(\mathcal{X}, \mu)$.  \\
 \\
 We will primarily consider the \textbf{random design} context where $X_i \iid \mu$, but the results in this paper also hold for fixed design regimes if the $X_i$ are picked such that $B_\mathcal{X}(x,h)$ contains at least one $X_i$ for all $x$ for sufficiently large $n$ and $| B_\mathcal{X}(x,h) \cap \mathcal{D}_X | / n \mu( B_\mathcal{X}(x,h) ) \rightarrow 1$. This is always possible with the choices of $h = a n^{-\frac{2 \beta }{2\beta + d - d^G } }$ for $d^G = 0, \dots, \lfloor 2\beta \rfloor$. We require that $\mu$ is such that there exists an $H > 0$ such that for all $x$ there are positive constants $c_x$ with $c_x h^{d} < \mu( B_\mathcal{X} (x,h) ) $ for all $0 < h < H$. This occurs when $\mu$ admits a density with respect to the Lebesgue measure that is bounded away from 0 on every compact subset of $\mathcal{X} \subseteq \RR^d$, or more generally with respect to the natural Riemannian volume form on $\mathcal{X}$. \\

This paper will focus on the the  $(L, \beta)$-\textbf{H\"{o}lder class} for $\mathcal{F}$, which we now define. This class is useful because it provides a direct route to Proposition \ref{prop:close_groups} later. Recall that a function $f$ is \textbf{$\beta$-H\"{o}lder smooth} if $f$ has bounded partial derivatives up to order $k  = \lfloor \beta \rfloor$ (i.e., the largest integer not larger than $\beta$) that are bounded, and for which the $k^{th}$ partial derivatives are $(\beta - k)$-H\"{o}lder continuous (which means that there exists an $A$ such that $|a(x) - a(y)| \leq A d(x,y)^{\beta - k}$ for all $x,y \in \mathcal{X}$). We can then define the H\"{o}lder class as:
 \begin{equation}
     \mathcal{F}(L, \beta) = \Big\{   f :\in L^2( \mathcal{X} ) : \sum_{\alpha : 0 \leq  |\alpha| \leq k } \| \partial^\alpha f \|_\infty + \sum_{ \alpha : | \alpha | = k} \sup_{x \neq y \in \mathcal{X}} \tfrac{ | (\partial^\alpha f) (x) - (\partial^\alpha f) (y) |}{ d(x,y)^{\beta - k}} \leq L \Big\}
 \end{equation}
where the multi-indices $\alpha = ( \alpha_1, \dots, \alpha_d ) \in \ZZ_{\geq 0}^d$ have $|\alpha| = \sum_i \alpha_i$, and the partial derivatives $\partial^\alpha$  are given by $\partial^{\alpha_1} \partial^{\alpha_2}\cdots \partial^{\alpha_d}$. \\
\\
For any particular estimator $\hat{f}$, let $\mathcal{E}(\hat{f}, \mathcal{D}) = \EE( (\hat{f}(X) - Y )^2 \mid \mathcal{D} )$ where $X \sim \mu$ and $Y = f(X) + \epsilon$ be the $L^2$-\textbf{generalisation error}. We use the notation $\hat{\mathcal{E}}(\hat{f}) = \tfrac{1}{n} \sum_{i = 1}^n ( \hat{f}(X_i) - Y_i )^2 $ for the \textbf{empirical error}, an unbiased estimate of $\mathcal{E}( \hat{f}, \mathcal{X})$ when the data points summed over are from an independent copy of $\mathcal{D}$. We will typically be interested in the \textbf{risk} $R(\hat{f} ) = \EE( (\hat{f}(X) - f(X) )^2 )$ which is given by $\EE ( \hat{ \mathcal{E} }( \hat{f} ) ) - \sigma^2 $. 

\subsection{Differential Geometry}
\label{ssec:diff_geo}

Our proofs later will rely on the geometry of sub-manifolds of $\mathcal{X}$, so here recall the definitions used in differential geometry. Further details in a statistical context can be found in \cite{fletcher2010terse}. A \textbf{$d$-dimensional smooth manifold} is a Hausdorff and second countable topological space that is covered by a collection of open sets $U_a$ with homeomorphisms $\phi_a : U_a \rightarrow \RR^d$ such that the transition maps $\phi^b \circ \phi_a^{-1}$ are infinitely differentiable for all $a, b$. Examples are $\RR^d$ itself, the subset $[0,1]^d$, or the sphere $S^d = \{ x \in \RR^{d +1 } : \|x\|_2 = 1 \}$, well known in directional statistics \citep{mardia2009directional}. At each point $x$ in a smooth manifold $\mathcal{M}$ we can attach a copy of $\RR^d$ called the \textbf{tangent space}, written $T_x \mathcal{M}$, where each vector corresponds to an equivalence class of smooth curves $\gamma: [-1,1] \rightarrow \mathcal{M}$ with $\gamma(0) = x$ and with equal derivatives at $0$. Each curve $\gamma$ can be given a length by $L( \gamma ) = \int_{-1}^1 \| \gamma'(t) \|_2 \dd t$, where the norms are taken in the tangent space at $\gamma(t)$. This in turn given a metric space structure on $\mathcal{M}$, where $d_\mathcal{M}( x, y) = \inf_\gamma L (\gamma) $ where the infimum is taken over all piece-wise smooth curves $\gamma$ with $\gamma(-1) = x \in \mathcal{M}$ and $\gamma(1) = y \in \mathcal{M}$. \\
\\
We call a subset $\mathcal{M}' \subseteq \mathcal{M}$ an \textbf{immersed submanifold} if the inclusion map $\phi : \mathcal{M}' \rightarrow \mathcal{M} : x \mapsto x$ has an everywhere injective derivative. An immersed submanifold is called \textbf{embedded} if $\phi$ is also a homeomorphism (i.e., it preserves topology). Every smooth manifold $\mathcal{M}$ can be isometrically embedded in $\RR^{k}$ for some $k > d$, a result known as the Nash Embedding Theorem \cite{nash1956imbedding}. This means that curves through $\mathcal{M}$ have the same length as they would when integrated in the embedding space, and so the distance between points in the manifold is bounded from below by the distance in the embedding Euclidean space.

\subsection{Group Theory}
\label{ssec:group_theory}

A \textbf{group} is a set $G$ with an associative binary operation (or \textbf{multiplication}) $(a, b) \mapsto ab$, such that: there is an identity element $e \in G$ with $eg = ge = g$ for all $g \in G$; and there are inverses $g^{-1}$ with $g^{-1} g = g g^{-1} = e$ for all $g \in G$. We call a group with one element, $\{e\}$, the trivial group and denote it by $I$. A group acts on the domain $\mathcal{X}$ via a map $\cdot : G \times \mathcal{X} \rightarrow \mathcal{X}$ with the properties that $e \cdot x = x$ and $g \cdot( h \cdot x ) = (gh) \cdot x$ for all $g, h \in G$ and $x \in \mathcal{X}$. This means that the maps $g \cdot : \mathcal{X} \rightarrow \mathcal{X}$ are the bijections $\phi_g$ from the introduction. We will usually assume that all group actions are \textbf{faithful}, which means that if $g \cdot x = x$ for all $x \in \mathcal{X}$ then $g = e$. A group action partitions the domain $\mathcal{X}$ into \textbf{orbits} of the form $[x]_G = \{ g \cdot x : g \in G \}$. We call the set of orbits the \textbf{quotient space} of the action, and write it as $\mathcal{X} / G$. \\
\\
If the group $G$ has a topology that agrees with the group multiplication, in the sense that $(g, h) \mapsto g^{-1}h$ is continuous, then we say that $G$ is a \textbf{topological group}. When we use topological adjectives (e.g. compact, Hausdorff, etc.) for a group we assume that the group and topology agree. We consider only \textbf{locally compact} groups in this paper, which means that every element $g \in G$ has a compact neighbourhood $g \in U \subseteq N$ for some open $U$. A stronger condition is that $G$ (as a set) also admits a differential manifold structure, which must also agree in that the map above is smooth. We call such a group a \textbf{Lie group}. When we use the notion of dimension of a group we are referring to a Lie group $G$. As a manifold, it must necessarily be Hausdorff and second countable, and admit a metric (in the sense of distance) $d_G : G \times G \rightarrow [0, \infty)$.  \\
\\
If $G$ is a locally compact group then there exists a Borel measure $\Gamma$ on $G$ such that $\Gamma( A ) = \Gamma( \{ ga : a \in A \} )$ for all measurable $A$ and all $g \in G$. Moreover, this measure is unique up to scaling, and we call such $\Gamma$ a \textbf{(left) Haar measure} on $G$ \citep{haar1933massbegriff}. When $G$ is compact, we can normalise any left Haar measure to give a uniform distribution on the group, $U(G)$.  An example of this for the group of unit norm complex numbers under multiplication is precisely the uniform distribution on this set. The locally compact group $\RR^d$ under vector addition has the Lebesgue measure as a Haar measure.  \\
\\
If a topological group acts on $\mathcal{X}$ we will usually assume that the action is continuous (in the product topology on $G \times \mathcal{X}$), and when a Lie group acts we will usually assume that the action is smooth. In the case of a locally compact Lie group acting smoothly and faithfully on $\mathcal{X}$ (the primary focus of this paper), the orbits $[x]_G$ are themselves immersed sub-manifolds of $\mathcal{X}$. If the action is also \textbf{proper}, i.e., inverse images of compact sets under the map $(g, m) \mapsto (m, g\cdot m)$ are compact, then the orbits are in fact smoothly embedded submanifolds. The Principle Orbit Type Theorem \citep{dieck1987transformation} says that there is a ``generic'' topology for orbits of a smooth proper action of a compact group $G$, i.e., that there is an open dense subset $U \subseteq \mathcal{X}$ with orbits $[x]_G$ that are pairwise diffeomorphic for all $x \in U$. In particular, the dimension of these principle orbits is the reduction we can achieve by symmetrising the estimator, which we denote $d^G$ with a superscript to distinguish from a metric on the group with a subscript $d_G$.

\subsection{Invariant Functions and Function Spaces}
\label{ssec:inv_funcs}
Let $G$ be any group acting on $\mathcal{X}$. We say that a function $f \in L^2(\mathcal{X})$ is \textbf{$G$-invariant} if there is a function $f_0$ with $\| f_0 - f\|_2 = 0$ and:
\begin{equation}
    f_0(x) = f_0( g \cdot x)
\end{equation}
for all $g \in G$ and $x \in \mathcal{X}$. If $\mathcal{F}$ is a set of functions in $L^2(\mathcal{X})$, we let $\mathcal{F}_G$ be the subset of $G$-invariant functions in $\mathcal{F}$. For any group $G$ the space $L^2_G(\mathcal{X})$ is a linear subspace of $L^2(\mathcal{X})$. This subspace is also closed in the topology of $L^2(\mathcal{X})$. If $G$ is acting continuously on $\mathcal{X}$, then we also have the following equivalence between $G$- and $\overline{G}$-invariant functions (with the bar indicating the topological closure of $G$).
\begin{Lem}[\cite{christie2023estimating}]
    \label{lem:closure_invariance}
    Suppose that $\mathcal{G}$ is a locally compact group acting continuously on $\mathcal{X}$. Any continuous $f \in L^2(\mathcal{X})$ is $G$-invariant if and only if $f$ is $\overline{G}$-invariant, for all $G \leq \mathcal{G}$. 
\end{Lem}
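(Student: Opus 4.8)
The plan is to prove the two implications of the biconditional separately, for an arbitrary fixed subgroup $G \leq \mathcal{G}$. The ($\Leftarrow$) direction is immediate: if $f$ is $\overline{G}$-invariant then by definition there is a representative $\bar f_0$ with $\|\bar f_0 - f\|_2 = 0$ and $\bar f_0(\bar g \cdot x) = \bar f_0(x)$ for all $\bar g \in \overline{G}$ and $x \in \mathcal{X}$; since $G \subseteq \overline{G}$, this same $\bar f_0$ witnesses $G$-invariance of $f$. (This direction uses neither continuity of $f$ nor the support hypothesis on $\mu$.) All the content is in the ($\Rightarrow$) direction, which I would split into two steps: first upgrade a.e.\ invariance to genuine pointwise invariance under $G$, then pass from $G$ to $\overline{G}$ by a continuity/limit argument.

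Step 1 (a.e.\ to everywhere): I claim that a continuous $G$-invariant $f$ satisfies $f(g \cdot x) = f(x)$ for \emph{every} $g \in G$ and \emph{every} $x \in \mathcal{X}$. Let $f_0$ be the representative from the definition, so $f_0(g \cdot x) = f_0(x)$ holds identically and $N := \{x : f(x) \neq f_0(x)\}$ is $\mu$-null. Write $L_g(x) := g \cdot x$; each $L_g$ is a homeomorphism (with inverse $L_{g^{-1}}$) since the action is continuous. On the set $\mathcal{X} \setminus (N \cup L_g^{-1}(N)) = A \cap L_g^{-1}(A)$, where $A := \mathcal{X}\setminus N$, we have $f = f_0 = f_0 \circ L_g = f \circ L_g$. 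In the setting of the paper each $L_g$ is (the restriction of) a smooth map, hence locally Lipschitz, hence maps $\mu$-null sets to $\mu$-null sets, so $L_g^{-1}(N)$ is null and therefore $A \cap L_g^{-1}(A)$ is co-null. Since $\mathrm{supp}(\mu) = \mathcal{X}$, every $\mu$-null set has empty interior, so $A \cap L_g^{-1}(A)$ is dense; as $f\circ L_g - f$ is continuous and vanishes on a dense set, it vanishes identically. This proves Step 1.

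Step 2 ($G$ to $\overline{G}$): Let $\bar g \in \overline{G}$ and choose a net $(g_\alpha) \subseteq G$ with $g_\alpha \to \bar g$ (a sequence suffices when $\mathcal{G}$ is metrizable, in particular when it is a Lie group). Continuity of the action in the product topology gives $g_\alpha \cdot x \to \bar g \cdot x$ for each fixed $x$, and continuity of $f$ then gives $f(g_\alpha \cdot x) \to f(\bar g \cdot x)$. But $f(g_\alpha \cdot x) = f(x)$ for every $\alpha$ by Step 1, so $f(\bar g \cdot x) = f(x)$ for all $\bar g \in \overline{G}$ and $x \in \mathcal{X}$. Hence $f$ is its own representative witnessing $\overline{G}$-invariance, which completes the proof.

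The main obstacle is Step 1: the a.e.-to-everywhere upgrade. The density argument hinges on knowing that the translate $L_g^{-1}(N)$ of a $\mu$-null set $N$ is again $\mu$-null — equivalently, that $\mu$ is quasi-invariant under the action — and this is exactly the point where the regularity of the action (smoothness, or at least that each $L_g$ preserves $\mu$-null sets, which follows in this paper from the Lebesgue-absolute-continuity of $\mu$ with density bounded below on compacta) must be invoked together with $\mathrm{supp}(\mu) = \mathcal{X}$; without such a hypothesis a continuous $f$ that is $G$-invariant only in the a.e.\ sense need not be pointwise invariant, and the set $\{x : f(x) = f(g\cdot x)\}$ need not be dense. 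Step 2, by contrast, is a soft continuity-and-limits argument and becomes essentially automatic once Step 1 is in hand.
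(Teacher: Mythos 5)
The paper does not supply its own proof of this lemma; it is quoted from \cite{christie2023estimating}, so there is no in-house argument to compare against, and your attempt has to be judged on its own merits. It is correct. The decomposition into the trivial ($\Leftarrow$) containment $G \subseteq \overline{G}$, the a.e.-to-pointwise upgrade (Step~1), and the continuity/limit passage from $G$ to $\overline{G}$ (Step~2) is the natural route. The valuable part of your write-up is the diagnosis of where the real content sits: Step~1 needs the preimages $L_g^{-1}(N)$ of $\mu$-null sets to be $\mu$-null, i.e.\ quasi-invariance of $\mu$ under the $\mathcal{G}$-action. The lemma's stated hypotheses --- ``locally compact group acting continuously'', $f$ continuous, $\mathrm{supp}(\mu) = \mathcal{X}$ --- are, read in isolation, slightly too weak for this, since a mere homeomorphism $L_g$ can enlarge a $\mu$-null set; the conclusion of Step~1 genuinely rests on the paper's ambient assumptions that the $\mathcal{G}$-action is smooth and that $\mu$ has a density bounded below on compacta (so that $\mu$-nullity coincides with Lebesgue-nullity and diffeomorphisms preserve it), and you invoke exactly that. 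The mechanism you then use --- the set $\{x : f(g\cdot x) = f(x)\}$ is closed, contains the co-null set $A \cap L_g^{-1}(A)$, and co-null plus $\mathrm{supp}(\mu) = \mathcal{X}$ forces density --- is the right one. Step~2 is clean and needs nothing beyond joint continuity of the action and continuity of $f$; the one thing worth stating explicitly is that $\overline{G}$ is the closure inside $\mathcal{G}$, so $\bar g \cdot x$ is defined via the ambient action, which is what licenses passing to the limit along the net $g_\alpha \to \bar g$.
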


Moreover, for every group $\mathcal{G}$ acting on $\mathcal{X}$, every function $f \in L^2(\mathcal{X})$ has a unique \textbf{maximal invariant subgroup} \citep{christie2023estimating}, i.e., a unique group $G_0 \leq \mathcal{G}$ such that $f$ is $G_0$-invariant, and every subgroup $f$ is invariant to is a subgroup of $G_0$. To see this, simply consider the group $\overline{ \langle G \leq \mathcal{G}: f \text{ is } G\text{-invariant} \rangle }$. We write $G_{\max}(f, \mathcal{G} )$ for this maximal invariant subgroup. 

\subsubsection{Compact Symmetries}
When $G$ is compact and $\mu$ is a $G$-invariant probability measure, $L^2(\mathcal{X})$ admits an orthogonal decomposition:
\begin{equation}
    \label{eq:orth_decomp_1}
    L^2(\mathcal{X}) = L^2_G(\mathcal{ X}) \oplus A_G( \mathcal{X} )
\end{equation}
See \cite{elesedy2021provablyLin} for a proof. We term the functions in $A_G( \mathcal{X} )$ \textbf{$G$-anti-symmetric}. There also exists an orthogonal projection onto $L^2(\mathcal{X})$, denoted $S_G : L^2(\mathcal{X}) \rightarrow L^2_G( \mathcal{X} )$. This projection has an explicit form:
\begin{equation}
    \label{eq:compact_projection}
    (S_G f) (x) = \EE ( f(g \cdot x ) )
\end{equation}
where $g \sim U( G )$. The orthogonal decomposition in equation \ref{eq:orth_decomp_1} can therefore be expressed as:
\begin{equation}
    \label{eq:orth_decomp}
    f = S_G f + f_G^\perp, \quad \text{ where } f_G^\perp = f - S_G f \in A_G (\mathcal{X} ).
\end{equation}
Note that by definition $S_G f = 0$ for any $f \in A_G( \mathcal{X} )$. When $\mu$ is not $G$-invariant, this operator is no longer an orthogonal projection, but it is still idempotent and its image is still $L^2_G( \mathcal{X} )$, so is still an oblique projection onto the $G$-invariant sub-space with eigenvalues bounded by $1$.

\begin{eg}
    \label{eg:example_decomp}
    Consider the case of $G = \{ z \in \CC : |z| = 1 \}$ acting by rotations on $\RR^2$ with $\mu$ a standard Gaussian measure.  The function $f$ on this disc given by: 
    \begin{equation}
        f(x) = \sin ( \| x \|_2 ) \cos( 2 \mathrm{arctan}( x_2 / x_1 ) )^2 = \sin (r ) \cos( 2 \theta )^2 
    \end{equation}
    in Euclidean and polar coordinates respectively, has an orthogonal decomposition shown in figure \ref{fig:example_decomp}.
    
    \begin{figure}[h!]
        \centering
        \begin{tabular}{ m{3cm} m{0.5cm} m{3cm} m{0.5cm} m{3cm} }
        \centering \includegraphics[scale = 0.24]{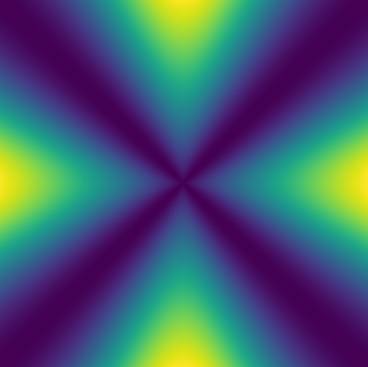} & 
        \centering = & 
        \centering \includegraphics[scale = 0.24]{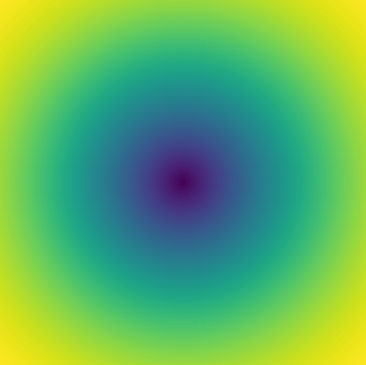} & 
        \centering + & 
        \centering \includegraphics[scale = 0.24]{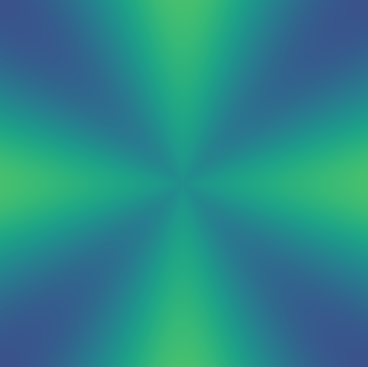} \tabularnewline
        \centering $f$ & 
        \centering = & 
        \centering $S_{G} f$ & 
        \centering + & 
        \centering $f^\perp_{G} $ \tabularnewline
        \tabularnewline
        &
        \centering = & 
        \centering $\frac{1}{2} \sin( r  )$ & 
        \centering + & 
        \centering $\sin(  r  ) (  \cos( 2 \theta )^2  - \frac{1}{2} )$
        \end{tabular}
        \caption{The orthogonal decomposition of $f = \sin(r) \cos( 2 \theta)^2$ into a $G$ symmetric piece and a $G$-anti-symmetric piece, as described in example \ref{eg:example_decomp}. }
        \label{fig:example_decomp}
    \end{figure}
\end{eg}

\subsubsection{Non-Compact Symmetries}
When $G$ is not compact, we can no longer use the operator in equation \ref{eq:compact_projection} because a Haar measure may not be normalisable (as with the Lebesgue measure on $\RR^d$). We can however pick any other distribution $\rho$ on $G$, and generalise to the \textbf{Partial Symmetrisation Operator} $S_{\rho} : L^2(\mathcal{X}) \rightarrow L^2(\mathcal{X})$ given by
\begin{equation}
    \label{eq:sym_contraction}
    S_\rho f ( x) = \EE( f( g \cdot x) )
\end{equation}
where $g \sim \rho$. This operator is not a projection, because it is not idempotent, but it is still a contraction in the $L^2$ norm when the integrating measure $\mu$ is $G$-invariant, and in the $L^\infty$ norm for all measures $\mu$. It is also trivially the identity on $L^2_G( \mathcal{X} )$.

\section{Estimation using a Particular Symmetry}
\label{sec:sym_rates}

We now turn to the statistical problem of estimating $f$ from within the H\"{o}lder function class $\mathcal{F}(L, \beta) \subseteq L^2(\mathcal{X} )$ using an iid sample $\{ (X_i, Y_i ) \}_{i = 1}^n$ with $\EE( Y_i \mid X_i) = f(X_i)$ and $\mathrm{Var}( Y_i \mid X_i ) = \sigma^2 < \infty$. Let $G$ be any locally compact Lie group acting smoothly, properly, and faithfully on $\mathcal{X}$ and let $U \subseteq G$ be a compact set with nonempty interior containing the identity. Suppose that $f_n$ is an estimator for $f$ that is minimax optimal for the class $\mathcal{F}(L, \beta)$, that we will call the \textbf{base estimator}. We will first, for each point $x\in \mathcal{X}$, construct a discrete distribution $\rho_m^x$ with support contained in $U$ and consider the partial symmetric estimator $S_{\rho_m^x} f_n(x)$. We will establish point-wise bounds: 
\begin{equation}
    \sup_{f \in \mathcal{F}(L, \beta)} \EE\big(  ( S_{\rho_m^x} f_n(x) - S_{\rho_m^x} f(x) )^2 \big) \leq C_{G, x} \phi_n^{\beta, d - d^G }
\end{equation}
where $\phi_n^{\beta, d - d^G } = n^{ - \frac{2 \beta}{2\beta + d - d^G }}$ and $d^G = \dim [x]_G$ for any principle orbit $[x]_G$. We will then show that these can be integrated under some conditions on the group action. Note that this estimator will not converge to $f$ unless $f$ is not $G$-invariant so these are not true risk bounds, but they are useful for understanding the convergence of these estimators to their asymptotic bias. \\
\\
For $G$-invariant $f$, this is perhaps the expected rate that would be achieved by projecting the covariates to the quotient space $\mathcal{X} / G$ and regressing the response against these orbits (of course, proving this would require a mild treatment of the topological conditions of the quotient space $\mathcal{X} / G$ and then applications of standard theorems). For the non $G$-invariant $f$, it is non-obvious that we can apply usual results because we do not observe iid data with conditional mean $S_{\rho_{m}^x} f$ - the distribution is very much affected by the averaging over the orbits.

\subsection{Defining Partially Symmetrised Estimators}
\label{ssec:par_sym_ests}

In this section we give an explicit construction for Partially\footnote{The name \textit{partial} comes from the fact that $S_{\rho_m^x}$ may not project $f_n$ to the space of $G$-invariant functions, as noted in section \ref{ssec:inv_funcs}.} Symmetrised Estimators, as described above. The main choice to make is the distribution $\rho$ to be used to partially symmetrise the base estimator $f_n$. We will do this by constructing a set of $m$ points $g^i \in G$ and take $\rho = U( \{ g^i \} )$. This must balance two goals: we want as many points as possible around $[x]_G$ in order to maximise the variance reduction, but would like the estimators $f_{n} ( g^i \cdot x )$ to be independent for theoretical tractability. In order to achieve this, we pick the a large number of points $m$ such that we can guarantee that $B_\mathcal{X}( g^i \cdot x, h)$ are pairwise disjoint for all $i, j$ for a chosen bandwidth $h > 0$. This is done by examining the geometry of the orbit $[x]_G$, which is a smoothly embedded sub-manifold of $\mathcal{X}$ under our conditions on the group action. These $g^i$ depend on $x$ and $h$, so we add subscripts to remind us of this choice.\\ 
\\
To construct the set $\{ g^i_{x,h} \}$, first isometrically embed $\mathcal{X}$ in $\RR^q$ for some $q \in \NN$ and extend $d_\mathcal{X}$ to the euclidean metric on $\RR^q$, and consider all points in $\mathcal{X}$ and $T_x[x]_G$ via this embedding. Since $U \subseteq G$ is a compact neighbourhood of the identity, $[x]_U = \{ g \cdot x \in \mathcal{X} : g \in U \}$ is a compact sub-manifold of $[x]_G$ of the same dimension. Let $W_{x}^{[x]_U}$ be the largest hypercube in $T_x [x]_U$ that is contained in the orthogonal projection of $[x]_U$ onto $T_x[x]_U$. Let $R_x^{[x]_U}$ be the side length of the hypercube $W_x^{[x]_U}$. If the orbit is $0$-dimensional, as with the trivial group $I = \{ e \}$, then we take $R_x^{[x]_U} = 1$. We construct the group elements $g_{x,h}^i$ as follows:
\begin{enumerate}[  1)]
    \item Pick a maximal grid $\{ a_i \}_{i = 1}^m$ in $W_x^{[x]_U}$ such that $d(a_i, a_j) \geq 2h $ whenever $i \neq j$;
    \item Orthogonally (with respect to $T_x[x]_U$) project this grid onto $[x]_U$, giving a set $\{ u_i \}_{i = 1}^m \subseteq [x]_U$; and
    \item Pick any\footnote{note that there can be more than one option if the \textbf{stabiliser} $G_x = \{ g \in G :g \cdot x = x \}$ is non trivial.} $g_{x,h}^i \in U$ such that $g_{x,h}^i \cdot x = u_i$.
\end{enumerate}

\begin{figure}[h]
    \centering
    \begin{tikzpicture}
        \draw (0,0) circle (2);
        \draw[<->] (2,-2.5) -- (2,2.5) node[ left]{$T_x [x]_U$};

        \draw[fill] (2,2) circle (0.07cm) node[right]{$a_1$};
        \draw[fill] (2,1) circle (0.07cm) node[right]{$a_2$};
        \draw[fill] (2,0) circle (0.07cm) node[right]{$a_3$};
        \draw[fill] (2,-1) circle (0.07cm) node[right]{$a_4$};
        \draw[fill] (2,-2) circle (0.07cm) node[right]{$a_5$};

        \draw[|-|, dashed] (2.6, 0) -- (2.6,1);
        \node[right] at (2.6, 0.5 ) {$2h$};

        \draw[|-|, dashed] (3.2,-2) -- (3.2,2);
        \node[right] at (3.2,0) {$R_x^{[x]_U}$}; 

        \draw[fill] (0,2) circle (0.07cm) node[below]{$g^1_{x,h} \cdot x$};
        \draw[fill] (1.732,1) circle (0.07cm) node[left]{$g^2_{x,h} \cdot x$};
        \draw[fill] (2,0) circle (0.07cm) node[left]{$g^3_{x,h} \cdot x$};
        \draw[fill] (1.732,-1) circle (0.07cm) node[left]{$g^4_{x,h} \cdot x$};
        \draw[fill] (0,-2) circle (0.07cm) node[above]{$g^5_{x,h} \cdot x$};

        \node at (-2.5,0) {$[x]_G$};
        \draw[dotted] (0,2) -- (2,2);
        \draw[dotted] (0,-2) -- (2,-2);
        \draw[dotted] (1.732,1) -- (2,1);
        \draw[dotted] (1.732,-1) -- (2,-1);
    \end{tikzpicture} 
    \caption{ The orbit of $x \in \RR^2$ under the action of $G = SO(2)$, along with the points $\{ a_i \}$ and $\{ u_i = g_{x,h}^i \cdot x \}$ that will be used to partially symmetrise the estimator $f_n$.}
    \label{fig:e}
\end{figure}
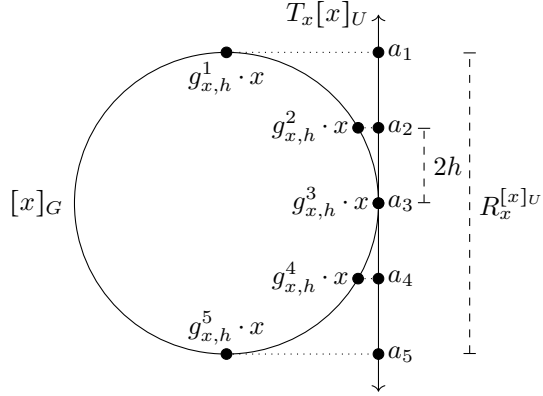

This allows us to define the symmetrising distribution $\rho_m^x$ as the uniform distribution on the set $\{ g_{x,h}^i \}$, which has size $m$ (which depends on $x$, $h$, and $U$ implicitly). This process is depicted in figure \ref{fig:e} for an example with $G$ the special orthogonal group of 2D rotations, $SO(2)$. We use this choice of $\rho$ to construct the partial symmetrisation operator $S_{\rho_m^x}$, and apply this to the chosen base estimator $f_n$ to construct the \textbf{Partially Symmetrised Estimator} $S_{ \rho_m} f_n$. The first step to analysing the partially symmetrised estimator requires us to prove the following lower bound on $m$ and confirm that the points are sufficiently spaced around $[x]_U$.  

\begin{prop}
    \label{prop:packing_nos}
    For all $x \in \mathcal{X}$ and $h > 0$ we have $m \geq \max( 1 , ( R_x^{[x]_U} (2h)^{-1} )^{\dim [x]_U } )$, and $d_\mathcal{X}( g^i_{x,h} \cdot x, g^j_{x,h} ) \geq 2 h$ whenever $i \neq j$.
\end{prop}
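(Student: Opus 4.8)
The plan is to prove the two claims separately, both by reducing to elementary packing estimates in the tangent space $T_x[x]_U$ via the isometric embedding into $\RR^q$.

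First I would handle the separation claim $d_\mathcal{X}(g^i_{x,h}\cdot x, g^j_{x,h}\cdot x) \geq 2h$ for $i \neq j$. By construction the points $u_i = g^i_{x,h}\cdot x$ are the orthogonal projections onto $[x]_U$ (within the embedding space $\RR^q$) of a grid $\{a_i\}$ lying in the hyperplane $T_x[x]_U$ with $d(a_i,a_j) \geq 2h$ for $i\neq j$. The key observation is that orthogonal projection onto a hyperplane is $1$-Lipschitz, hence the projection of $[x]_U$ onto $T_x[x]_U$ does not increase distances; conversely, since $u_i$ projects back to $a_i$, the Euclidean distance $\|u_i - u_j\|_2$ in $\RR^q$ is at least $\|a_i - a_j\|_2 \geq 2h$ (the segment $u_i u_j$ has projection $a_i a_j$ onto the hyperplane, and a segment is at least as long as its projection). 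Finally, because $\mathcal{X}$ is isometrically embedded in $\RR^q$, geodesic distance in $\mathcal{X}$ dominates Euclidean distance in $\RR^q$ (as noted in Section \ref{ssec:diff_geo}), so $d_\mathcal{X}(u_i,u_j) \geq \|u_i-u_j\|_2 \geq 2h$. The degenerate case $\dim[x]_U = 0$ is trivial since then there is a single point.

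Next I would bound $m$ from below. When $\dim[x]_U = 0$ we set $R_x^{[x]_U}=1$ and $m=1 = \max(1,(R/2h)^0)$, so assume $k := \dim[x]_U \geq 1$. The set $\{a_i\}$ is a \emph{maximal} subset of the $k$-dimensional hypercube $W_x^{[x]_U}$ of side length $R := R_x^{[x]_U}$ with pairwise distances $\geq 2h$; maximality means the closed balls $B(a_i, 2h)$ cover $W_x^{[x]_U}$ (otherwise an uncovered point could be added). Hence $\mathrm{vol}_k(W) \leq \sum_i \mathrm{vol}_k(B(a_i,2h)) = m \cdot v_k (2h)^k$ where $v_k$ is the volume of the unit $k$-ball, giving $m \geq R^k / (v_k (2h)^k)$. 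Since $v_k \leq 1$ for all $k \geq 1$ (the unit cube contains the unit ball when... wait — actually $v_k < 1$ for $k \geq 2$ and $v_1 = 2$), I would instead use a cleaner packing-by-translation argument: partition $W$ into $\lfloor R/(2h)\rfloor^k$ disjoint subcubes of side $2h$; each such subcube has diameter $2h\sqrt{k}$, which is not quite $\leq 2h$, so this particular partition does not immediately work either. The honest route is the volumetric covering bound above combined with $v_k(2h)^k \leq (2h)^k$ for $k\geq 2$, plus separate treatment of $k=1$ (where $\{a_i\}$ maximal $2h$-separated in an interval of length $R$ forces $m \geq R/(2h)$ directly, even $\geq \lfloor R/(2h)\rfloor + 1$). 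Either way one obtains $m \geq (R/(2h))^k$, and combining with $m\geq 1$ always (the grid is nonempty as it contains the center) yields the stated bound.

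The main obstacle I anticipate is the second claim, specifically making the volumetric packing constant come out to exactly $(R_x^{[x]_U}/(2h))^{\dim[x]_U}$ with no extra dimension-dependent factor. A maximal $2h$-separated set gives a $2h$-covering, and the volume comparison $\mathrm{vol}(W) \leq m\,\mathrm{vol}(B(2h))$ only yields $m \geq \mathrm{vol}(W)/\mathrm{vol}(B(2h)) = (R/(2h))^k / v_k$, which is weaker than claimed when $v_k > 1$ (i.e.\ $k=1$, $v_1=2$) — though for $k=1$ the direct interval argument recovers it, and for $k \geq 2$ one has $v_k \leq \pi^2/6 < 2$ and in fact $v_k \to 0$, so $v_k \le 1$ for $k \ge 2$ and the bound holds. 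So the resolution is a short case split on $k$. I would present the $k=1$ case via the interval argument and the $k\geq 2$ case via the volume-of-ball bound using $v_k \leq 1$, remarking that the $\max(1,\cdot)$ absorbs all small cases (including $R < 2h$, where the claimed lower bound is $<1$ and $m\geq 1$ suffices since the grid always contains at least the central point of $W$).
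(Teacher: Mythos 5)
Your separation argument is correct and is essentially the paper's argument: writing $u_i = a_i + n_i$ with $n_i$ perpendicular to $T_x[x]_U$, the Pythagorean decomposition (equivalently, your $1$-Lipschitz projection observation) gives $\|u_i - u_j\|_{\RR^q} \geq \|a_i - a_j\|_{\RR^q} \geq 2h$, and the isometric embedding promotes this to a bound on $d_{\mathcal{X}}$. Fine.

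The lower bound on $m$, however, has a genuine gap, and it stems from two compounding issues. First, you misread the construction: the paper picks a maximal \emph{grid}, i.e.\ a regular lattice with spacing $2h$, which trivially fits at least $\lceil R/(2h)\rceil$ points along each of the $k = \dim[x]_U$ sides of the hypercube, giving $m \geq \lceil R/(2h)\rceil^k$ directly; the only nontrivial step is then the elementary ceiling inequality $\lceil x\rceil^k \geq \lceil x^k\rceil \geq \max(1, x^k)$. You instead interpret the set as a saturated $2h$-packing and invoke a covering-by-balls volume bound, which yields $m \geq (R/(2h))^k / v_k$ with $v_k$ the unit $k$-ball volume. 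Second, your rescue of this weaker bound relies on the claim that $v_k \leq 1$ for $k \geq 2$, which is false: $v_2 = \pi$, $v_3 = 4\pi/3$, and $v_k$ increases up to $v_5 \approx 5.26$ before decaying; in fact $v_k > 1$ for all $1 \leq k \leq 12$. (Your intermediate assertion $v_k \leq \pi^2/6$ is also incorrect; that quantity is $\zeta(2)$, not a bound on ball volumes.) Consequently the volumetric argument gives a strictly weaker constant in dimensions $2$ through $12$ and the proposed case split does not close the gap. The fix is to drop the volumetric route entirely and count grid points per axis as the paper does — this sidesteps $v_k$ altogether.
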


\subsection{Point-wise Convergence Rates of Partially Symmetrised Estimators}
\label{ssec:point_wise_risk}
Symmetrised estimators improve convergence because symmetrisation operators can have minimal effect on the bias (even none when $f$ is $G$-invariant), but the variance can be dramatically reduced. This allows for a re-optimisation of the bias-variance trade-off for a faster converging estimator. This is formalised in this section, which requires us to make some assumptions on the base estimator $f_n$, the bandwidth $h_n$, and later on the group action. \\
\\
\textbf{Assumption set (E):}
\begin{enumerate}[  (E1)]
    \item (Strict Locality) Conditioned on $\mathcal{D}_X$,  $f_n(x)$ and $f_n(y)$ are independent whenever $d(x,y) \geq 2h$;
    \item (Default Value) If the ball $B_\mathcal{X}(x,h)$ contains no $X_i$, then $f_n(x) = 0$;
    \item (Optimality) There exist positive constants $B, V, C \in \RR_{>0}$, such that for all $f \in \mathcal{F}$, for $\mu$-almost all $x$, and for all $n$:
        \begin{enumerate}
            \item (Bias) If $B_\mathcal{X}(x,h) \cap \mathcal{D}_X$ is non-empty, then $| \EE( f_n(x) \mid \mathcal{D}_X ) - f(x) | \leq B h^\beta $;
            \item (Variance) If $B_\mathcal{X}(x,h) \cap \mathcal{D}_X$ is non-empty, then $\mathrm{Var}( f_n(x) \mid \mathcal{D}_X ) \leq \tfrac{ V }{ | B_{\mathcal{X}}(x,h) \cap \mathcal{D}_X | } $;
            \item (Risk) $\EE( (f_n(x) - f(x) )^2 ) \leq C \big( B^2 h^{2\beta} + V (n \mu( B_\mathcal{X}(x,h) ) )^{-1} \big)$
        \end{enumerate}
\end{enumerate}
These assumptions on the base estimator are natural. (E1) is somewhat restrictive but is satisfied by local polynomial estimators using a kernel $K$ supported only on the interior of the unit ball that is scaled by a bandwidth $h$, and with a deterministic value for the case where there is no data in the $h$-ball around $x$. When the deterministic default value is 0, the local polynomial estimator satisfies (E2) trivially. Any other estimator can be made to satisfy this too by multiplication by the indicator function $\mathbf{1}_{|\mathcal{D}_X \cap B_\mathcal{X}(x,h)| > 0}$. Condition (E3) is required to ensure that the derived estimators are also rate optimal, and essentially says that the estimator $f_n$ performs well when there is data in the $h$-ball and that the sampling regime ensure that this happens frequently. These assumptions are satisfied even for simple estimators such as the Nadaraya-Watson estimator when $\beta \leq 2$ in the context of our statistical problem (section \ref{ssec:stat_problem}); the full details of this can be found in appendix \ref{app:lce_properties}. 

\begin{rem}
    Assumption (E1) could be weakened to allow for sufficiently quickly decaying covariance which would allow for Gaussian kernels or $k$-nearest neighbour estimators. Such a weakening is offered with condition (E1b'), which is satisfied for a Gaussian kernel in local polynomial estimation, or when the bandwidths are chosen adaptively. An interesting question would be whether such a condition also applies to kernel ridge regressors or neural networks. 
\begin{enumerate}[  (E1b')]
    \item (Weak Locality) $\mathrm{Cov}( f_n(x), f_n(y) \mid \mathcal{D}_X ) \leq c \exp( - a n )$ for some positive constants $a, c \in \RR_{> 0}$.
\end{enumerate}
\end{rem}

We will also require the following conditions that relate the geometry of the (partial)-orbits $[x]_U$ to the measure $\mu$ and to the smoothness of the regression function.   \\
\\
\begin{minipage}{\textwidth}
\textbf{Assumption set (Q):} 
\begin{enumerate}[  (Q1)]
    \item $\EE\big( (R_X^{[X]_U})^{-d^G } \big) < \infty$, where $X \sim \mu$; and
    \item $d^G \leq 2 \beta$.
\end{enumerate}
\end{minipage}
\vspace{0.1cm}

Condition (Q1) says that the distribution $\mu$ is not ``too concentrated'' around any singularities of the action. This is satisfied in many normal situations, as shown in the example below. We also show that in some cases this is not satisfied. The second condition, (Q2), ensures that the new optimal bandwidth selection rule does not cause the variance of $f_n$ to increase too rapidly. When $d^G > 2 \beta$ the bound on the variance of the original estimator $f_n$ in (E2) can in fact increase with $n$. This is balanced by averaging over $O( h^{- d^G } )$ more points, resulting in the overall reduction in variance. However, if $d^G > 2 \beta$ then we are saying that the number of points in each local ball $B(x,h)$ will decrease with $n$ rather than increase.   

\begin{eg}[Example of Assumption (Q1)]
    \label{eg:R_x}
    Consider the action of $G = SO(3)$ on $\mathcal{X} = \RR^3$ by rotations, and since $G$ is compact we take $U = G$. The orbit of $x \in \mathcal{X}$ is the sphere of radius $\| x \|_2$, which means that $R_x^{[x]_G} = \sqrt{2} \| x \|_2$ for all $x \in \RR^3$. We will consider three possible distributions of $X$ all centered at the singularity of this action, the origin. If $\mu$ is a standard Gaussian distribution on $\mathcal{X}$, i.e., $X \sim N( 0 , I_3 )$, then $\| X \|_2^2 \sim \chi_3^2$. The regular points of the action are the non-zero points of $\mathcal{X}$, and $d^G = 2$. Thus 
    \begin{equation}
        \EE( (R_X^{[X]_G})^{- d^G} ) = \EE( ( \sqrt{2} \| X \|_2 )^{-2} ) = 1 / 2 < \infty
    \end{equation}
    using the fact that if $Y \sim \chi^2_\nu$ then $\EE( 1 / Y ) = 1/ (\nu - 2)$. If $\mu$ is the uniform distribution on the unit ball then $\| X \|_2 \overset{D}{=} z^{1/3}$ when $z \sim U( [0,1] )$ and $\| X \|_2^{-2} = 3 < \infty$ too.  If instead the law of $X$ has $\| X_2 \|_2^2 \sim \mathrm{Cauchy}_+( 0, 1 )$, where $Y \sim \mathrm{Cauchy}_+(0,1)$ if it is the absolute value of a $\mathrm{Cauchy}(0,1)$ variable, and is uniform in direction, then the expectation above is infinite. 
\end{eg}

These assumptions allow us to prove the following proposition bounding the point-wise convergence rate of the partially symmetrised estimator $S_{\rho_x^m} f_n (x)$. This is done by separate analysis of the point-wise bias and variance of this estimator, and the results for each of these can be found in Lemmas \ref{lem:pointwise_bias_disc_sym} and \ref{lem:pointwise_var_disc_sym} in section \ref{sec:proofs} along with the proof of this proposition.

\begin{prop}[Point-wise MSPE of $S_{\rho_m^x} f_n$] 
    \label{prop:point_wise_error_bound}
    Under assumption sets (E) and (Q), in the random design context, with the bandwidth choice $h_n = a n^{-1/ ( 2\beta + \dim{\mathcal{X}} - d^G ) }$ for any positive $a$, we have:
    \begin{align}
		\sup_{ f \in \mathcal{F}(L, \beta) } \EE \big( ( S_{\rho_m^x} f_n (x) - S_{\rho_m^x} f(x))^2 \big) &\leq C_{G,x} \phi_n^{\beta, d - d^G }
	\end{align}
 for all $n \in \ZZ_{> 0}$, where $C_{G,x}$ is given by:
 \begin{equation}
     C_{G,x} =  \Big( 2 \big( B^2  a^{2 \beta}  + \tfrac{ 2^{d^G}V }{ c_{*}(x)  ( R_{ x}^{[x]_U})^{d^G } } \big) + ( L + V ) \big( \tfrac{c_{*}(x) a^{d} ( 2 \beta - d^G ) }{ 16 \beta } \big)^{ \tfrac{ 2 \beta }{d}} \Big)
 \end{equation}
 where $c_*(x) = \inf_{g \in U} c_{ g \cdot x}$. In the fixed design regime these bounds hold for all sufficiently large $n$ such that $|B_\mathcal{X}(x, h) \cap \mathcal{D}_X | > n \mu( B_\mathcal{X}(x,h) ) / 2$.
\end{prop}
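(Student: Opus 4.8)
The plan is to reduce the bound to a bias/variance analysis of a normalised average over the orbit points constructed in Section~\ref{ssec:par_sym_ests}; this is what Lemmas~\ref{lem:pointwise_bias_disc_sym} and~\ref{lem:pointwise_var_disc_sym} will record, and the proposition then follows by adding their estimates and substituting the stated bandwidth. Write $S_{\rho_m^x}f_n(x)-S_{\rho_m^x}f(x)=\frac1m\sum_{i=1}^m(f_n(y_i)-f(y_i))$ with $y_i:=g^i_{x,h}\cdot x\in[x]_U$. By Proposition~\ref{prop:packing_nos} the points $y_i$ are pairwise $2h$-separated, so the balls $B_\mathcal{X}(y_i,h)$ are disjoint, and $m\ge\max(1,(R_x^{[x]_U}/2h)^{d^G})$. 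Conditioning on $\mathcal{D}_X$, assumption (E1) makes $f_n(y_1),\dots,f_n(y_m)$ mutually independent, so
\[
\EE\big((S_{\rho_m^x}f_n(x)-S_{\rho_m^x}f(x))^2\big)=\EE\Big[\Big(\tfrac1m\sum_i b_i\Big)^2\Big]+\EE\Big[\tfrac1{m^2}\sum_i v_i\Big],
\]
where $b_i:=\EE(f_n(y_i)\mid\mathcal{D}_X)-f(y_i)$ and $v_i:=\mathrm{Var}(f_n(y_i)\mid\mathcal{D}_X)$; the two terms are controlled by the two lemmas.

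For the variance term (Lemma~\ref{lem:pointwise_var_disc_sym}), (E3)(b) gives $v_i\le V/|B_\mathcal{X}(y_i,h)\cap\mathcal{D}_X|$ when this ball meets the sample and $v_i=0$ otherwise (the default-value convention (E2)). Integrating over $\mathcal{D}_X$, the count $|B_\mathcal{X}(y_i,h)\cap\mathcal{D}_X|$ is $\mathrm{Binomial}(n,\mu(B_\mathcal{X}(y_i,h)))$, so the elementary inverse-moment inequality $\EE[\mathbf{1}_{N\ge1}/N]\le 2/\EE[N]$ together with the regularity lower bound $\mu(B_\mathcal{X}(y_i,h))\ge c_*(x)h^d$ — uniform over the compact partial orbit $[x]_U$ with $c_*(x)=\inf_{g\in U}c_{g\cdot x}>0$ — yields $\EE[v_i]\le 2V/(n c_*(x)h^d)$. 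Combining with $1/m\le(2h/R_x^{[x]_U})^{d^G}$ gives $\EE[\frac1{m^2}\sum_i v_i]\le \frac{2^{d^G+1}V}{c_*(x)(R_x^{[x]_U})^{d^G}}\,\frac1{nh^{d-d^G}}$; this is the step in which the variance genuinely drops by the factor $1/m\asymp h^{d^G}$, and where the moment condition (Q1) makes the constant finite after integration in $x$.

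For the bias term (Lemma~\ref{lem:pointwise_bias_disc_sym}), (E3)(a) gives $|b_i|\le B h^\beta$ when $B_\mathcal{X}(y_i,h)$ meets the sample, while on the complementary event $f_n(y_i)=0$ and so $|b_i|=|f(y_i)|\le\|f\|_\infty\le L$ since $f\in\mathcal{F}(L,\beta)$. Hence $|\frac1m\sum_i b_i|\le B h^\beta+\frac Lm\#\{i:B_\mathcal{X}(y_i,h)\cap\mathcal{D}_X=\emptyset\}$, and because the empty fraction lies in $[0,1]$, $(\frac1m\sum_i b_i)^2\le 2B^2h^{2\beta}+\frac{2L^2}{m}\sum_i\mathbf{1}_{B_\mathcal{X}(y_i,h)\cap\mathcal{D}_X=\emptyset}$; taking expectations and using $\PP(B_\mathcal{X}(y_i,h)\cap\mathcal{D}_X=\emptyset)=(1-\mu(B_\mathcal{X}(y_i,h)))^n\le e^{-nc_*(x)h^d}$ bounds $\EE[(\frac1m\sum_i b_i)^2]$ by $2B^2h^{2\beta}+2L^2e^{-nc_*(x)h^d}$. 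Adding the two bounds and substituting $h=h_n=an^{-1/(2\beta+d-d^G)}$ — admissible because (Q2), $d^G\le 2\beta$, keeps $nh_n^d$ bounded below so none of the estimates deteriorate — turns $h_n^{2\beta}$ and $(nh_n^{d-d^G})^{-1}$ into fixed multiples of $\phi_n^{\beta,d-d^G}$, and (when $d^G<2\beta$, so $nh_n^d\to\infty$) the residual exponential becomes another such multiple via $e^{-t}\le(k/(et))^k$ with $k=2\beta/(2\beta-d^G)$; collecting constants produces $C_{G,x}$. One caveat: these estimates presuppose $h_n<H$ so that the regularity bound applies, so for the finitely many small $n$ with $h_n\ge H$ I would instead bound $\EE[(\frac1m\sum_i(f_n(y_i)-f(y_i)))^2]\le\frac1m\sum_i\EE[(f_n(y_i)-f(y_i))^2]$ by convexity and apply the risk bound (E3)(c), which is constant over that finite range and hence dominated by $C_{G,x}\phi_n^{\beta,d-d^G}$ after enlarging $C_{G,x}$; this fallback is responsible for the last term of $C_{G,x}$. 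The fixed-design statement is identical once the binomial inverse-moment step is replaced by the assumed deterministic bound $|B_\mathcal{X}(x,h)\cap\mathcal{D}_X|>n\mu(B_\mathcal{X}(x,h))/2$.

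The main obstacle is the variance step. It requires the $m$ points of the construction to be simultaneously numerous enough that $1/m\lesssim h^{d^G}$ (exactly Proposition~\ref{prop:packing_nos}, which rests on the geometry of the embedded orbit $[x]_U$) and $2h$-separated so that (E1) yields honest conditional independence, while each $y_i$ still lies where $\mu$ is regular so that (E2)--(E3) apply with constants uniform over $[x]_U$. Reconciling these requirements, together with the demand that the bound hold for every $n\ge1$ — which forces the large-bandwidth fallback and the somewhat ornate form of $C_{G,x}$ — is where the care is needed, rather than in any single inequality.
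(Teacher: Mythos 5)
Your proposal is correct and takes essentially the same approach as the paper: a bias--variance decomposition after conditioning on $\mathcal{D}_X$, conditional independence of the $f_n(y_i)$ from the $2h$-separation guaranteed by Proposition~\ref{prop:packing_nos} together with (E1), the measure regularity lower bound $\mu(B_\mathcal{X}(y,h))\ge c_*(x)h^d$ on the partial orbit, and substitution of the stated bandwidth to turn $h^{2\beta}$ and $(nh^{d-d^G})^{-1}$ into multiples of $\phi_n^{\beta,d-d^G}$. The organizational differences are small but worth noting. You work with $\EE[Z^2]=\EE\big[(\EE[Z\mid\mathcal{D}_X])^2\big]+\EE\big[\mathrm{Var}(Z\mid\mathcal{D}_X)\big]$ and fold the ``variance of the conditional bias'' piece into the first summand via the crude inequality $p^2\le p$ for the empty-ball fraction $p\in[0,1]$, whereas the paper instead uses the total-variance split $\EE[Z^2]=(\EE Z)^2+\EE[\mathrm{Var}(Z\mid\mathcal{D}_X)]+\mathrm{Var}(\EE[Z\mid\mathcal{D}_X])$ and controls the last term by Popoviciu's inequality for bounded random variables; your route is slightly cleaner on that point. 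You also replace the paper's Chernoff split $\PP(N<np/2)\le e^{-np/8}$ followed by the crude bound $V/N\le V$ with the direct binomial inverse-moment bound $\EE[\mathbf{1}_{N\ge1}/N]\le 2/(np)$, which is legitimate and avoids an extra exponential term. The price is that the numerical constant you would obtain is not literally the paper's $C_{G,x}$: your variance piece carries an $a^{-(d-d^G)}$ factor, and the exponential-to-polynomial step via $e^{-t}\le(k/(et))^k$ with $k=2\beta/(2\beta-d^G)$ yields a different (though structurally analogous) constant than the paper's Lemma~\ref{lem:exp_poly_bound}. One small misattribution in your narration: the last term $(L+V)\big(c_*a^d(2\beta-d^G)/(16\beta)\big)^{2\beta/d}$ of $C_{G,x}$ is precisely what Lemma~\ref{lem:exp_poly_bound} produces from the residual exponential, not the product of a small-$n$ fallback --- the paper does not explicitly carry out the $h_n\ge H$ safeguard you add, and that safeguard would only further enlarge the constant.
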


\subsection{Integrated Risk of Partially Symmetrised Estimators}
\label{ssec:integrated_risk}

 The pointwise bound is true for almost all $x \in \mathcal{X}$, but not uniform because of the $R_x^{[x]_U}$ and $c_*$ terms. The inverse moments of $R_X^{[X]_U }$ can blow up to infinity on neighbourhoods of singular points of the action. Assumption (Q1) controls this blow up, ensuring that $\mu$ is not too concentrated around the singularities of the action. This allows us to integrate the pointwise bound to gain integrated rates over some Borel subset $\Omega \subseteq \mathcal{X}$. When $\Omega \subseteq \mathcal{X}$ is compact, we can take $c_*$ to be a constant in $x$ over $\Omega$, because its minimum over $\mathcal{X}$ works in the required bound. Thus we can obtain the integrated risk in the following proposition. Note that from here we sometimes drop the superscript $x$ on $\rho_m^x$ when considering $S_{\rho_m^x} \phi (x)$ as a function of $x \in \mathcal{X}$.

\begin{thm}[IMSPE of $S_{\rho_m} f_n$] 
    \label{thm:MISPE_tilde_f}
    Suppose that $\Omega$ is a compact Borel subset of $\mathcal{X}$ such that (Q2) holds when $X$ is conditioned on $X \in \Omega$, and suppose that $c_x = c > 0$ for all $x \in \Omega$. Under assumption sets (E) and (Q), with a bandwidth $h_n = a n^{-1/ (2\beta + d - d^G) }$ we have:
    \begin{equation}
		\sup_{f \in \mathcal{F}(L, \beta) } \EE\big( (S_{\rho_m^X} f_n(X) - S_{\rho_m^X} f(X))^2 \mid X \in \Omega \big)  \leq C_{G, \Omega} \, \phi_n^{\beta, d - d^G }
	\end{equation}
    where $x$ is any regular point of the action, and $C_{G, \Omega} = \EE( C_{G,X} \mid X \in \Omega ) < \infty$ where $X \sim \mu$. In particular, when $\mathcal{X}$ is compact, we have:    
    \begin{equation}
		\sup_{f \in \mathcal{F}(L, \beta) } \EE\big( \| S_{\rho_m} f_n - S_{\rho_m} f \|_2^2 \big)  \leq C_G \, \phi_n^{\beta, d - d^G }
	\end{equation}
 where $\EE( C_{G,X} ) < \infty$.
\end{thm}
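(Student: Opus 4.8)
The plan is to derive the integrated bound by simply averaging the pointwise bound of Proposition~\ref{prop:point_wise_error_bound} over $x$; the entire argument reduces to checking that the $\mu$-average of the constant $C_{G,x}$ is finite, for which assumption (Q1) is exactly the hypothesis needed.

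First I would fix $f \in \mathcal{F}(L,\beta)$, assume $\mu(\Omega) > 0$ so that conditioning on $\{X\in\Omega\}$ is meaningful, and apply Proposition~\ref{prop:point_wise_error_bound} at each regular point $x$ (the regular points carry full $\mu$-mass under the standing assumptions on the action) to get $\EE_{\mathcal{D}}\big((S_{\rho_m^x}f_n(x) - S_{\rho_m^x}f(x))^2\big) \le C_{G,x}\,\phi_n^{\beta,\,d-d^G}$ for $\mu$-a.e.\ $x$. Since $X\sim\mu$ is independent of the data, I would then integrate both sides over $X$ conditioned on $\{X\in\Omega\}$, using Tonelli's theorem to exchange the $X$-integral with $\EE_{\mathcal{D}}$ (the integrand is non-negative), which yields
$$ \EE\big( (S_{\rho_m^X} f_n(X) - S_{\rho_m^X} f(X))^2 \mid X \in \Omega \big) \;\le\; \EE(C_{G,X}\mid X\in\Omega)\,\phi_n^{\beta,\,d-d^G}. $$
Because the right-hand side does not involve $f$, taking $\sup_{f\in\mathcal{F}(L,\beta)}$ on the left preserves the inequality, so it only remains to prove $C_{G,\Omega} := \EE(C_{G,X}\mid X\in\Omega) < \infty$.

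For that I would split $C_{G,x}$ into its three displayed summands. The term $2B^2 a^{2\beta}$ is constant. For the other two, the key observation is that $[\Omega]_U := \{g\cdot x : g\in U,\ x\in\Omega\}$ is compact (a continuous image of $U\times\Omega$), so on it the density of $\mu$ is bounded above and bounded away from $0$; hence for $x\in\Omega$ the quantity $c_*(x) = \inf_{g\in U} c_{g\cdot x}$ is bounded above by a constant (one may always shrink the $c_y$, so take $c_y\le 1$), which controls the third summand, and bounded below by some $c_{**}>0$, so the second summand is at most $2^{d^G+1}V c_{**}^{-1}(R_x^{[x]_U})^{-d^G}$. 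Its average is then finite because
$$ \EE\big((R_X^{[X]_U})^{-d^G}\mid X\in\Omega\big) = \mu(\Omega)^{-1}\,\EE\big((R_X^{[X]_U})^{-d^G}\mathbf{1}_{X\in\Omega}\big) \le \mu(\Omega)^{-1}\,\EE\big((R_X^{[X]_U})^{-d^G}\big) < \infty $$
by assumption (Q1). Summing three finite contributions gives $C_{G,\Omega}<\infty$, hence the first bound. For the $L^2$ statement I would take $\Omega=\mathcal{X}$ compact, so the conditioning is vacuous and $[\mathcal{X}]_U=\mathcal{X}$; writing $\|S_{\rho_m}f_n - S_{\rho_m}f\|_2^2 = \int_{\mathcal{X}}(S_{\rho_m^x}f_n(x)-S_{\rho_m^x}f(x))^2\,\dd\mu(x)$ and exchanging the $\mu$-integral with $\EE_{\mathcal{D}}$ by Tonelli, the same two steps deliver the claim with $C_G = \EE(C_{G,X}) < \infty$.

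The only non-routine point is the finiteness of $\EE((R_X^{[X]_U})^{-d^G})$: since the side-length $R_x^{[x]_U}$ collapses to $0$ as $x$ approaches a singular orbit, without a restraint on the $\mu$-mass near the singular set the average of $C_{G,x}$ could diverge — and that is exactly what (Q1) is designed to rule out. Everything else (Tonelli, the $\sup_f$ removal, the two-sided control of $c_*$) is routine once the compactness of $U$ and $\Omega$ is exploited.
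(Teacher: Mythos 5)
Your proposal follows the paper's proof step for step: apply the pointwise bound of Proposition~\ref{prop:point_wise_error_bound}, average over $X \sim \mu$ conditioned on $\{X \in \Omega\}$ (the paper uses the tower property, you use Tonelli---the same nonnegative exchange), and verify $\EE(C_{G,X}\mid X\in\Omega)<\infty$ term by term with (Q1) controlling the $(R_X^{[X]_U})^{-d^G}$ piece. You also tidy up two small points of detail: you correctly name (Q1), not (Q2), as the working integrability hypothesis (the references to (Q2) in the theorem statement and in the last line of the paper's proof are evident typos, since (Q2) makes no mention of $X$), and you show that the unconditional (Q1) already yields the conditional version via $\EE(\cdot\mid X\in\Omega)\le\mu(\Omega)^{-1}\EE(\cdot)$, making a separate conditional hypothesis unnecessary once $\mu(\Omega)>0$; and in handling $c_*(x)=\inf_{g\in U}c_{g\cdot x}$ you obtain two-sided control on the compact set $[\Omega]_U$ (plus the free normalisation $c_y\le 1$) rather than leaning on the stated hypothesis $c_x=c$, which as written constrains $c_y$ only for $y\in\Omega$ and not at the points $g\cdot x\notin\Omega$ that actually enter the infimum---a small imprecision in the paper's phrasing that your argument sidesteps cleanly.
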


The condition that assumption (Q2) holds when conditioned on $X \in \Omega$ holds in almost all interested cases, for example, a $d$-dimensional unit hypercube in $\RR^d$ when $\mathcal{G} = \RR^d$ acting by translation.

\subsection{Computing Partial Symmetrisation Operators}

The explicit construction of $\rho_m^x$ is always possible given any point $x \in \mathcal{X}$, the bandwidth $h$, the group $G$, and its action on $\mathcal{X}$, however, it requires computational effort in the general case---especially when the orbits are non-linear. We now show that in some cases we can compute these averaging operators quickly by using the specific structure of the group action.

\subsubsection{Covariate Sparsity - Linear Orbits}

Recall example \ref{eg:cov_sparsity}, where we show that covariate sparsity in non-parametric regression is a special case of a translation symmetry. Specifically, in this case $\mathcal{G} = \RR^d$ acts on $\mathcal{X} = \RR^d$ by translation and the covariates that $f$ does not depend on can be identified with subgroups of $\mathcal{G}$ of the form $\{ a e_i : a \in \RR \}$ for the standard basis $\{ e_i \}$ of $\mathcal{X}$. When constructing symmetrisation operators we can instead exploit the uniform structure of the orbits of all subgroups of $\mathcal{G}$. Let $U = [-1, 1]^d$, and see that $T_x[x]_U$ can be identified with $U$ precisely (as there are only trivial stabilisers) for all $x \in \mathcal{X}$. Thus a $(2h)$-grid on $[x]_U$ is exactly the easily computable grid for the linear tangent space. This means that we need only one computation of the group elements $g^i$ for the dataset $\mathcal{D}$. 

\subsubsection{Compact Groups}

\label{sssec:compact_groups_comps}
In the special case of a compact group $G$ and when $\mu$ is $G$-invariant, we can use the properties of the full symmetrisation operator $S_G$ to improve on the estimator $S_{\rho_m} f_n $. Specifically, we can show $S_G f_n (x) = S_G S_{\rho_m^x} f_n (x)$ by the shift invariance of the Haar measure (see the proof below), which gives the result:
\begin{equation}
    \| S_G f_n (x) - S_G f \|_2 = \| S_G S_{\rho_m^x} f_n - S_G S_{\rho_m^x} f \|_2 \leq \| S_{\rho_m^x} f_n - S_{\rho_m^x} f \|_2
\end{equation}
because $S_G$ is an orthogonal projection with operator norm 1. Thus the fully symmetrised estimator $S_G f_n$ will outperform the partially symmetrised $S_{\rho_m} f_n$, almost surely. Thus as a corollary to theorem \ref{thm:MISPE_tilde_f}, we obtain:

\begin{Cor}[IMSPE of Fully Symmetrised Estimators]
    \label{cor:MSIPE_of_syms}
    Suppose that $\mathcal{X}$ and $G$ are both compact and that $\mu$ is $G$-invariant. Under conditions (E) and (Q), we have that:
	\begin{equation}
		\sup_{f \in \mathcal{F}(L, \beta)} \EE( \| S_G f_n  - S_G f \|_2^2 ) \leq C_G \phi^{ \beta, d - d^G }_n
	\end{equation}
    for the same constant $C_G$ as in Theorem \ref{thm:MISPE_tilde_f}.
\end{Cor}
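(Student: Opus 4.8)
The plan is to derive the corollary from Theorem \ref{thm:MISPE_tilde_f} by showing that, when $\mu$ is $G$-invariant, the fully symmetrised estimator is $L^2$-no-worse than the partially symmetrised one. Concretely, I would first establish the operator identity
$$S_G \circ S_{\rho_m} \;=\; S_G \qquad \text{on } L^2(\mathcal{X},\mu),$$
where $S_{\rho_m}$ denotes the linear partial-symmetrisation map $\phi \mapsto \big(x \mapsto S_{\rho_m^x}\phi(x)\big)$ used in Theorem \ref{thm:MISPE_tilde_f}. Granting this, linearity of $S_G$ gives $S_G f_n - S_G f = S_G\big(S_{\rho_m}f_n - S_{\rho_m}f\big)$; since $G$ is compact and $\mu$ is $G$-invariant, $S_G$ is the \emph{orthogonal} projection of $L^2(\mathcal{X},\mu)$ onto $L^2_G(\mathcal{X})$ — this is precisely where $G$-invariance of $\mu$ enters — and hence a contraction, so $\| S_G f_n - S_G f \|_2 \le \| S_{\rho_m}f_n - S_{\rho_m}f \|_2$ almost surely. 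Taking expectations, then $\sup_{f\in\mathcal{F}(L,\beta)}$, and invoking the compact-$\mathcal{X}$ case of Theorem \ref{thm:MISPE_tilde_f} yields the stated bound with exactly its constant $C_G = \EE(C_{G,X})$.

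The work is thus concentrated in the identity $S_G \circ S_{\rho_m} = S_G$, whose engine is left-invariance of the uniform (Haar) measure $U(G)$ on the compact group $G$. Evaluated at a point $x$,
$$S_G\big(S_{\rho_m}\phi\big)(x) \;=\; \int_{[x]_G} \phi \,\dd \bar\lambda_x, \qquad \bar\lambda_x \;=\; \EE_{g\sim U(G)}\big[\lambda_{g\cdot x}\big], \qquad \lambda_y \;=\; \tfrac{1}{m(y)}\sum_{i} \delta_{g^i_{y,h}\cdot y},$$
the Haar average over the orbit of the atomic ``grid measures'' produced by the construction of Section \ref{ssec:par_sym_ests}, and one must show $\bar\lambda_x$ equals the unique $G$-invariant probability measure $\nu_x$ on the homogeneous space $[x]_G \cong G/G_x$. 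Since the construction of the $g^i_{y,h}$ is purely metric and geometric it is $G$-equivariant along an orbit, $\lambda_{g\cdot y} = g_\ast \lambda_y$, whence $\bar\lambda_x = \int_G g_\ast\lambda_x \,\dd U(G)(g) = \nu_x$ by invariance, normalisation, and uniqueness of the invariant measure; equivalently, for each fixed $g'$ in the finite support of $\rho_m^x$ the law of $g'g$ is again $U(G)$, so pre-averaging over the finitely many $g^i_{x,h}$ leaves the full-orbit average $\EE_{g\sim U(G)}[\phi(g\cdot x)] = S_G\phi(x)$ unchanged.

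I expect this equivariance/averaging step to be the main obstacle: one must check that a consistent equivariant choice of the $g^i_{y,h}$ can be made along each orbit and that the (possibly non-isometric) effect of the $G$-action on $d_\mathcal{X}$ does not break it. If that turns out to be delicate, a robust alternative that at least recovers the rate is to bound $\EE\|S_G f_n - S_G f\|_2^2$ directly: write $S_G f_n(x) = \int_{[x]_G} f_n \,\dd \nu_x$, note that its conditional bias is $O(h^\beta)$ by the bias part of (E3), that its conditional variance is $O\big(\nu_x(B_\mathcal{X}(z,2h))\,(n h^d)^{-1}\big)$ by (E1) with $\nu_x(B_\mathcal{X}(z,2h)) = O\big((h/R_x^{[x]_U})^{d^G}\big)$ from the orbit-geometry estimate behind Proposition \ref{prop:packing_nos}, integrate against $\mu$ using (Q1) exactly as in the proof of Theorem \ref{thm:MISPE_tilde_f}, and optimise $h_n = a n^{-1/(2\beta + d - d^G)}$. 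The residual manipulations in either route (Jensen on the orbit average, $\mu$-invariance to absorb the orbit average under $\int_\mathcal{X}\dd\mu$, and the supremum over $\mathcal{F}$) are routine.
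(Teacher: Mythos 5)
Your proposal follows the same basic route as the paper's proof: establish a projection identity $S_G \circ S_{\rho_m} = S_G$ via Haar invariance, use that $S_G$ is an orthogonal projection (and hence a contraction) when $\mu$ is $G$-invariant, deduce $\| S_G f_n - S_G f \|_2 \leq \| S_{\rho_m} f_n - S_{\rho_m} f \|_2$ almost surely, and then take expectations and invoke Theorem \ref{thm:MISPE_tilde_f}. The last three moves match the paper exactly. However, you have actually put your finger on a subtlety that the paper's own proof skates over, and your caution about equivariance is well placed.

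The paper proves the pointwise identity $(S_G S_{\rho_m^x}\phi)(y) = (S_G\phi)(y)$ for each \emph{fixed} $x$: here $S_{\rho_m^x}$ is a fixed averaging over the finite set $\{g^i_{x,h}\}$, and the identity is a one-line consequence of shift invariance of Haar measure ($g^i g \overset{D}{=} g$; the paper writes $g g^i$, a harmless left/right slip since Haar measure on a compact group is bi-invariant). But Theorem \ref{thm:MISPE_tilde_f} controls $\| S_{\rho_m} \psi \|_2^2 = \int ( S_{\rho_m^z}\psi(z) )^2 \, d\mu(z)$ where the defining distribution varies with the evaluation point. To run the paper's chain $\| S_G \phi \|_2 = \| S_G S_{\rho_m}\phi \|_2 \leq \| S_{\rho_m}\phi \|_2$ one needs $S_G(S_{\rho_m}\phi) = S_G\phi$ for this \emph{pointwise-varying} $S_{\rho_m}$, i.e. $\EE_{g}\big[ (S_{\rho_m^{g\cdot y}}\phi)(g\cdot y)\big] = S_G\phi(y)$. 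The fixed-$x$ identity does not give this: the construction of Section \ref{ssec:par_sym_ests} makes arbitrary (non-equivariant) choices of maximal grid and of the group elements $g^i_{x,h}$, so $\lambda_{g\cdot y} = g_\ast \lambda_y$ is not automatic. Your diagnosis — that one must either make the $g^i_{y,h}$ construction $G$-equivariant along orbits, or bypass the identity and bound $\EE\|S_G f_n - S_G f\|_2^2$ directly via Jensen, change of variables under $G$-invariant $\mu$, and the orbit-geometry of Proposition \ref{prop:packing_nos} — is exactly the missing piece the paper's argument needs. Both of your suggested repairs are viable; the equivariant-choice route preserves the precise constant $C_G$ most cleanly, while the direct route recovers the rate but may inflate constants.

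One small caveat on your ``equivalently'' sentence: the observation that $g'g \overset{D}{=} g$ for fixed $g'$ in the support of $\rho_m^x$ is the argument for the \emph{fixed}-$x$ identity, not for the pointwise-varying one, so it is not an equivalent formulation of the equivariance claim — it is the weaker statement the paper actually proves. Everything else in your main route is correct.
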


Now we can estimate $S_G f_n$ with a simple Monte-Carlo estimator, where we average over $M$ samples of $g_i \iid U(G)$. This gives the estimator of $f$:
\begin{equation}
    \hat{f}_{n, G} (x) = \frac{1}{M} \sum_{i = 1}^M f_n ( g_i \cdot x) 
\end{equation}
which is easily computable whenever we can sample uniformly from the group. The analysis of this estimator is simple when $\mu$ is $G$-invariant, and achieves the same rates as $S_{\rho_m} f_n$ and $S_G f_n$ when $M = n$.

\begin{Cor}
    \label{cor:MSIPE_f_hat_G}
    Suppose $\mathcal{X}$ and $G$ are compact, and that $\mu$ is $G$-invariant. Under assumption sets  (E) and (Q),  with $M \asymp n$, the estimator $\hat{f}_{n,G}$ satisfies:
    \begin{equation}
       \sup_{f \in \mathcal{F}(L, \beta) } \EE ( \| \hat{f}_{n,G } - S_G f \|_2^2 ) \leq (12 C + 12 L^2 + 2 C_G )  \phi_n^{\beta, d - d^G }
    \end{equation}
    where $C_G$ is as in Proposition \ref{thm:MISPE_tilde_f} and $C$ is as in (E3).
\end{Cor}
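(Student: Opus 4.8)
The plan is to decompose the error about $S_G f$ with the triangle inequality,
$\EE\|\hat f_{n,G} - S_G f\|_2^2 \le 2\,\EE\|\hat f_{n,G} - S_G f_n\|_2^2 + 2\,\EE\|S_G f_n - S_G f\|_2^2$,
and handle the two pieces separately. The second piece is exactly the quantity controlled by Corollary \ref{cor:MSIPE_of_syms}, so it contributes $2 C_G \phi_n^{\beta, d - d^G}$. All the remaining work is in the ``Monte-Carlo error'' $\EE\|\hat f_{n,G} - S_G f_n\|_2^2$, and the conceptual point is that $\hat f_{n,G}$ is, conditionally on the data $\mathcal{D}$, an unbiased $M$-sample average for $S_G f_n$ — not for $f$ itself, since $f$ need not be $G$-invariant.

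For that term I would condition on $\mathcal{D}$. By equation \ref{eq:compact_projection}, $(S_G f_n)(x) = \EE_{g \sim U(G)}[\, f_n(g \cdot x) \mid \mathcal{D}\,]$, and $g_1, \dots, g_M$ are iid $U(G)$, independent of $\mathcal{D}$; hence for each fixed $x$, $\hat f_{n,G}(x)$ is the mean of $M$ iid copies of $f_n(g \cdot x)$ with conditional mean $(S_G f_n)(x)$, so $\EE[(\hat f_{n,G}(x) - (S_G f_n)(x))^2 \mid \mathcal{D}] = \tfrac1M \mathrm{Var}_{g \sim U(G)}(f_n(g \cdot x) \mid \mathcal{D}) \le \tfrac1M \EE_{g \sim U(G)}[\, f_n(g \cdot x)^2 \mid \mathcal{D}\,]$. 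Integrating over $x \sim \mu$ and using $G$-invariance of $\mu$ — which makes $x \mapsto g \cdot x$ measure-preserving, so that $\int \EE_{g}[\,f_n(g \cdot x)^2 \mid \mathcal{D}\,]\, \dd\mu(x) = \|f_n\|_2^2$ by Tonelli — collapses everything to $\EE\|\hat f_{n,G} - S_G f_n\|_2^2 \le \tfrac1M \EE\|f_n\|_2^2$.

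It then remains to control $\EE\|f_n\|_2^2$ and collect the rates. Writing $\|f_n\|_2^2 \le 2\|f_n - f\|_2^2 + 2\|f\|_2^2$ gives $\EE\|f_n\|_2^2 \le 2 R(f_n) + 2 L^2$, using $\|f\|_2 \le \|f\|_\infty \le L$ on $\mathcal{F}(L,\beta)$ (and that $\mu$ is a probability measure); and integrating the pointwise risk bound (E3) at the bandwidth $h_n = a\, n^{-1/(2\beta + d - d^G)}$, together with $\mu(B_\mathcal{X}(x,h)) \ge c h^d$ and (Q2), shows that $R(f_n)$ is bounded by a fixed multiple of $C$ uniformly in $n$ (its bias part being $O(\phi_n^{\beta, d - d^G})$ and its variance part staying $O(1)$). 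Finally $M \asymp n$ together with $n^{-1} \le \phi_n^{\beta, d - d^G}$ (the latter because $d \ge d^G$) turns $\tfrac1M \EE\|f_n\|_2^2$ into a constant times $\phi_n^{\beta, d - d^G}$; adding the Corollary \ref{cor:MSIPE_of_syms} contribution and bookkeeping the numerical constants (including the implied constant in $M \asymp n$) yields the stated bound with constant $12 C + 12 L^2 + 2 C_G$.

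The only non-routine point is the second step: because $f$ is not assumed $G$-invariant, $\hat f_{n,G}$ is not unbiased for $f$, nor even for $S_G f$, but only (conditionally) for $S_G f_n$, so the argument must route through $S_G f_n$ and then invoke Corollary \ref{cor:MSIPE_of_syms} to pass from $S_G f_n$ to $S_G f$. The $G$-invariance of $\mu$ is used precisely here, to commute the orbit average past the $\mu$-integral so that $\int \EE_g[\,f_n(g \cdot x)^2\,]\,\dd\mu(x)$ reduces to $\|f_n\|_2^2$ with no distortion factor; everything else follows from (E3), the uniform boundedness of $R(f_n)$, and results already in hand.
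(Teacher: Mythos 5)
Your proposal is correct and follows essentially the same route as the paper: decompose by the sum-of-squares inequality around $S_G f_n$, treat $\|S_G f_n - S_G f\|_2^2$ by Corollary \ref{cor:MSIPE_of_syms}, and bound the Monte-Carlo term by conditioning on $\mathcal{D}$, identifying it as a $\tfrac{1}{M}\mathrm{Var}_g(f_n(g\cdot X))$, and using $G$-invariance of $\mu$ to integrate.

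The one place where you diverge, and it is purely cosmetic, is in how you control $\mathrm{Var}_g(f_n(g\cdot x)\mid \mathcal{D})$. You drop the centering immediately (variance $\le$ second moment), pull the $\mu$-integral through by Tonelli to get $\tfrac{1}{M}\EE\|f_n\|_2^2$, and then split once more into $2R(f_n)+2\|f\|_2^2$. The paper instead keeps the centered quantity $f_n(g\cdot X)-S_G f_n(X)$ and expands it into three differences $(f_n - f)\circ(g\cdot)$, $-(S_G f_n - S_G f)$, and $(f - S_G f)\circ(g\cdot)$ before applying the three-term sum-of-squares inequality; the first and second reduce to $\|f_n - f\|_2^2$ (the second via $\|S_G\|\le 1$) and the third to $\|f - S_G f\|_2^2 \le \|f\|_2^2$. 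Both routes produce a bound of the form (const.)$\,M^{-1}(R(f_n)+\|f\|_2^2)$ and both rely on (Q2) to keep the variance part of $R(f_n)$ at the suboptimal bandwidth $h_n = a n^{-1/(2\beta+d-d^G)}$ from blowing up; the exact multiplicative constant you'd extract differs slightly from the paper's $12C+12L^2$, but neither argument tracks this constant sharply and both give a uniform $O(\phi_n^{\beta,d-d^G})$ bound using $M\asymp n$ and $n^{-1}\le \phi_n^{\beta,d-d^G}$. Your remark that $\hat f_{n,G}$ is unbiased only for $S_G f_n$ (not for $f$ or even $S_G f$) is exactly the right conceptual point, and is the reason the detour through $S_G f_n$ is necessary.
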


\section{Adaption to Maximal Symmetries}

We can now consider the main problem of this paper: what is the best subset $G \subseteq \mathcal{G}$ to use to (partially) symmetrise $f_n$ to best estimate $f$? The first step is to note that if $f$ is invariant to the elements of a subset $A \subseteq \mathcal{G}$, then $f$ is also the group \textbf{generated} by $A$, i.e., the intersection of all subgroups of $\mathcal{G}$ that contain $A$, written $\langle A \rangle$. Moreover Lemma \ref{lem:closure_invariance} ensures that $f$ will be invariant to the topological closure $\overline{ \langle A \rangle }$. Thus our best subset will be a closed subgroup of $\mathcal{G}$---a fact with simplifies our search remarkably to just the question: How can we estimate the maximal invariant subgroup of $f$, and can we do this quickly enough to improve the estimation of $f$?    \\
\\
Given any estimator $f_n$ that satisfies the condition set (E), the rates in the previous section hold for the distributions $\rho_x^m(G)$ constructed point-wise in $x$ for each closed subgroup $G \leq \mathcal{G}$. So a simple two step $M$-estimator, where we minimise $\hat{\mathcal{E}} ( S_{\rho_m^x(G)} f_{n} ) $ over the closed subgroups $G \leq \mathcal{G}$, would give an estimate of the unique maximal subgroup of $f$, and the minimising estimator might be expected to perform with the rate $\phi_n^{\beta, d - d^{G_{\max}(f, \mathcal{G})} }$. In this section we should that it can be approximated by minimising over a well chosen finite set of subgroups (that grows with $n$), and that the minimiser over this set accomplishes the desired property of a risk that decays with the fast rate $\phi_n^{\beta, d - d^{G_{\max}(f, \mathcal{G})} }$, both globally when $\mathcal{X}$ is compact and locally when it is not.  \\
\\
This first requires three key pieces of analysis in order to do statistics when the objects are subgroups: 
\begin{enumerate}[  1)]
    \item that we can construct a metric on the set of closed subgroups in the same way that we must with other object oriented data analysis tasks \citep{marron2021object};
    \item that this set is totally bounded in this metric\footnote{ recall that a subset $U$ of a metric space $(X, d_X)$ is \textbf{totally bounded} if it can be covered by a finite number of open $\epsilon$ balls $B_X( x_i, \epsilon)$ for any $\epsilon > 0$.}; and 
    \item that we can bound the asymptotic bias of $f_{n, G}$ in terms of the distance between $G$ and $H$ (for which $f$ is $H$-invariant).
\end{enumerate}
We denote the set of closed subgroups of $\mathcal{G}$ as $K(\mathcal{G})$. Moreover, we will be greedy in our estimation by further reducing the problem to just connected subgroups. This is justified because if $f$ is $G$-invariant, then $f$ is also $G'$-invariant where $G'$ is the connected component of the identity in $G$, and $d^{G'} = \dim [x']_{G'} = \dim [x]_G$ for the regular points of each actions.  \\
\\
With these mathematical results we can define our estimator of $G_{\max}( f, \mathcal{G} )$, the Error Minimising Symmetry, and then our estimator for $f$, the Best Symmetric Estimator given by partial symmetrisation by the error minimising symmetry. We then prove that the Best Symmetric Estimator achieves the desired adaption to the maximal symmetry of $f$.

\subsection{Metrising Subgroup Space}
\label{ssec:subgroup_space}

First, recall that if $(X, d_X)$ is a metric space then the \textbf{Hausdorff metric} between compact subsets $A, B$ is given by
\begin{equation}
	d_{\mathrm{Haus}} ( A, B ) = \max \big( \sup_{g \in A} \inf_{h \in B} d_{X}(g,h) , \sup_{h \in B} \inf_{g \in A} d_{X}(g,h) \big) 
\end{equation}
If $X$ is a compact space, then the set of closed subsets of $X$ is compact under this Hausdorff metric \citep{henrikson1999completeness}. This also means that it is totally bounded, i.e., for all $\epsilon > 0$ there are a finite number of subsets $A$ with open balls of radius $\epsilon$ in the Hausdorff metric that cover the set of compact subsets. \\
\\ 
All Lie groups $\mathcal{G}$ can be given a metric $d_\mathcal{G}(g, h)$, given by the length of the shortest geodesic from $g$ to $h$ under any chosen Riemannian metric tensor\footnote{ When $\mathcal{G}$ is isomorphic to the Cartesian product of a compact group and $\RR^k$ for some $k$, then there is a natural choice given by the unique bi-invariant metric, see \cite{milnor1976curvatures}, Lemma 7.5. } on $\mathcal{G}$. Since $\mathcal{G}$ is locally compact groups we can take a compact neighbourhood $U$ of the identity $e \in \mathcal{G}$. We define a finite metric $d_{\mathrm{Haus}(U)} : K( \mathcal{G} ) \times K( \mathcal{G} ) \rightarrow [0, \infty) $ given by:
\begin{equation}
    d_{\mathrm{Haus}(U)}( G, H ) = d_{ \mathrm{Haus} }( G \cap U , H \cap U )
\end{equation}
Note that since $\mathcal{G}$ is Hausdorff we know $G \cap U$ is compact for all $G \in K( \mathcal{G} )$, so this is well defined. Moreover, it inherits all the metric properties of $d_{\mathrm{Haus}}$ with base space $U$, for which the set of closed subsets,  $\{ \overline{A} \subseteq U \}$, is compact. The subgroups $K(\mathcal{G})$ can be identified with the subsets $\{ G \cap U \subseteq U \} \subseteq \{ \overline{A} \subseteq U \}$, which means that it is totally bounded (as a subset of a totally bounded space). \\
\\
Now we need to consider how a small distance $d_{\mathrm{Haus}(U)}(G, H)$ allows us to understand the behaviour of $S_{\rho}$ (for some distribution $\rho$ on $G$ with $\mathrm{supp}(\rho) \subseteq U \cap G$) over the space of $H$ invariant functions. This allows us to bound the asymptotic bias when considering $G$-invariant estimators for an $H$ invariant regression function $f$. The main requirements are bounds on derivatives of $f$ and of the action of $\mathcal{G}$. In particular, we say that $\mathcal{G}$ has a $L_\mathcal{G}$-\textbf{Lipshitz} action on $\mathcal{X}$ if $d_\mathcal{X}( g \cdot x, h \cdot x ) \leq L_\mathcal{G} d_\mathcal{G}( g, h) $ for all $x \in \mathcal{X}$ and $g, h \in \mathcal{G}$ (with $d_\mathcal{G}(g, h)$ the metric on the group $\mathcal{G}$ we use to compute Hausdorff distances). Since the action of $\mathcal{G}$ is smooth this condition only means that its derivatives are bounded. An example of this is the following.

\begin{eg}[Example of Lipschitz Action]
    \label{eg:lipschitz_action}
    Consider $\mathcal{G} = SO(3)$ acting naturally on the closed unit ball $\overline{B_{\RR^3}( 0 ,1 )} \subseteq \RR^3$. The Riemannian distance on $\mathcal{G}$ is given by $d_{\mathcal{G}}( g, h ) = 2^{-1/2} \| \log ( g^{-1} h ) \|_F$, which is equal to the (minimal magnitude) angle of the rotation $g^{-1}h$ (Example 4.7 \citet{fletcher2010terse}). Thus we have:
    \begin{align}
        d( g \cdot x, h \cdot x) &= \| g \cdot x - h \cdot x \|_2 = \| (I - g^{-1}h) x \|_2 
    \end{align}
    Thinking of $g, h \in \mathcal{G}$ as matrices here. This expression is maximised when $\| x \|_2$ attains its maximum of $1$, and this chord length is bounded by the angle between $x$ and $g^{-1}h \cdot x$, so we have:
    \begin{align}
        d( g \cdot x, h \cdot x) = \| (I - g^{-1}h) x \|_2 \leq 2^{-1/2} \| \log( g^{-1} h ) \|_2 = d_{\mathcal{G}} (g , h ) 
    \end{align}
    Therefore the action on this space is $1$-Lipschitz. 
\end{eg}

\begin{prop}
    \label{prop:close_groups}
    Let $\rho_G^x$ be distributions on the groups $G$ each with support contained in a compact neighbourhood of the identity $U$, and which can be distinct for each point $x \in \mathcal{X}$. Suppose that $\mathcal{G}$ has an $L_\mathcal{G}$-Lipschitz action on $\mathcal{X}$. Then for all $f \in \mathcal{F}( L, \beta ) \cap L^2_H( \mathcal{X} )$, we have:
    \begin{align}
        \big|( S_{\rho_G^x} f )(x) - f(x) \big| &\leq L L^{\alpha}_\mathcal{G} d_{\mathrm{Haus}(U)} ( G, H )^{ \alpha}
    \end{align}
    for all $x \in \mathcal{X}$, where $\alpha = \min( \beta, 1 )$.
\end{prop}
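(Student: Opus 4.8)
The plan is to bound $|(S_{\rho_G^x}f)(x) - f(x)|$ by controlling how far a group element $g \in \operatorname{supp}(\rho_G^x) \subseteq G \cap U$ can move $x$ relative to the orbit of $x$ under $H$, using the fact that $f$ is constant along $H$-orbits. First I would fix $x \in \mathcal{X}$ and a point $g$ in the support of $\rho_G^x$. Since $g \in G \cap U$ and $d_{\mathrm{Haus}(U)}(G,H) = d_{\mathrm{Haus}}(G \cap U, H \cap U)$, by definition of the Hausdorff metric there exists $h \in H \cap U$ with $d_{\mathcal{G}}(g,h) \le d_{\mathrm{Haus}(U)}(G,H)$. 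Applying the $L_{\mathcal{G}}$-Lipschitz property of the action gives $d_{\mathcal{X}}(g \cdot x, h \cdot x) \le L_{\mathcal{G}}\, d_{\mathrm{Haus}(U)}(G,H)$. Because $f \in L^2_H(\mathcal{X})$ and is continuous (it lies in the H\"older class), Lemma \ref{lem:closure_invariance} lets us take the genuinely $H$-invariant representative, so $f(h \cdot x) = f(x)$ exactly. Hence $|f(g \cdot x) - f(x)| = |f(g \cdot x) - f(h \cdot x)|$, and this is the key reduction.

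Next I would use the H\"older regularity of $f$ to turn the bound on $d_{\mathcal{X}}(g\cdot x, h \cdot x)$ into a bound on $|f(g \cdot x) - f(h\cdot x)|$. If $\beta \le 1$, then $f$ itself is $\beta$-H\"older with constant at most $L$ (the sum defining $\mathcal{F}(L,\beta)$ includes the $\|\partial^\alpha f\|_\infty$ terms and the seminorm with $k = 0$), so $|f(g\cdot x) - f(h\cdot x)| \le L\, d_{\mathcal{X}}(g\cdot x, h\cdot x)^{\beta} \le L L_{\mathcal{G}}^{\beta} d_{\mathrm{Haus}(U)}(G,H)^{\beta}$. If $\beta > 1$, then $f$ is Lipschitz (its first derivatives are bounded, and $\sum_{|\alpha|\le 1}\|\partial^\alpha f\|_\infty \le L$ bounds the gradient norm appropriately), so $|f(g\cdot x) - f(h\cdot x)| \le L\, d_{\mathcal{X}}(g\cdot x, h\cdot x) \le L L_{\mathcal{G}} d_{\mathrm{Haus}(U)}(G,H)$. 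Writing $\alpha = \min(\beta,1)$ unifies both cases into $|f(g\cdot x) - f(x)| \le L L_{\mathcal{G}}^{\alpha} d_{\mathrm{Haus}(U)}(G,H)^{\alpha}$ for every $g$ in the support of $\rho_G^x$.

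Finally I would integrate over $g \sim \rho_G^x$: since $(S_{\rho_G^x}f)(x) - f(x) = \EE_{g\sim\rho_G^x}\big(f(g\cdot x) - f(x)\big)$ and the integrand is bounded pointwise by $L L_{\mathcal{G}}^{\alpha} d_{\mathrm{Haus}(U)}(G,H)^{\alpha}$, Jensen's inequality (or just the triangle inequality for integrals) yields the claimed bound, uniformly in $x$. The main obstacle I anticipate is the subtlety in the reduction $f(h\cdot x) = f(x)$: one must be careful that "$f$ is $H$-invariant" in the $L^2$ sense (equality of an a.e.-representative) can legitimately be upgraded to pointwise equality for the continuous representative, which is exactly where continuity of $f$ and Lemma \ref{lem:closure_invariance} (invariance under the closure, combined with $h \in \overline{H} = H$) are needed; everything else is a routine chain of the Lipschitz and H\"older estimates. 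A secondary point to check is that the geodesic distance $d_{\mathcal{G}}$ used to define $d_{\mathrm{Haus}(U)}$ is the same metric appearing in the Lipschitz hypothesis, which the statement's phrasing already arranges.
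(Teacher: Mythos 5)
Your proof is correct and follows essentially the same route as the paper's: both reduce to $|f(g\cdot x) - f(h\cdot x)|$ for some $h \in H \cap U$ close to $g$ via the Hausdorff bound, apply the Lipschitz action and the $\alpha$-H\"{o}lder regularity of $f$ (with $\alpha = \min(\beta,1)$), and then integrate over $g \sim \rho_G^x$. The only cosmetic difference is that you fix a near-minimising $h$ explicitly while the paper carries the infimum (and the square) through the chain of inequalities; both are sound.
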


\subsection{Defining the Error Minimising Symmetries}
\label{ssec:error_min_syms}
Our goal now is to construct a finite set of closed subgroups for which the minimiser of $\hat{\mathcal{E}}( S_{\rho(G)}f_{n} )$ over this set is rate optimal for estimation over the class $\mathcal{F}(L, \beta)$.  Note that for convenience we suppress the scripts on $\rho(G) = \rho^x_m(G)$. To ensure that we capture examples for each possible rate, we first stratify $K(\mathcal{G})$ by the dimension of the regular orbits; i.e., we define:
\begin{equation}
    K_\ell( \mathcal{G} ) = \big\{ G \in K( \mathcal{G} ) : \dim [x]_G = \ell, G \text{ is connected} \big\}.
\end{equation}

\begin{minipage}{\textwidth}
This gives the chain of inclusions that simplify the search space to just the union on the right: 
\begin{equation}
    \bigg\{\begin{array}{c} \text{ Subsets } \\ \text{ of } \mathcal{G}  \end{array}\bigg\} \supseteq  \bigg\{\begin{array}{c} \text{ Subgroups } \\ \text{ of } \mathcal{G}  \end{array}\bigg\} \supseteq  \bigg\{\begin{array}{c} \text{ Closed } \\ \text{ Subgroups of } \mathcal{G}  \end{array}\bigg\} \supseteq  \bigg\{\begin{array}{c} \text{ Closed Connected } \\ \text{ Subgroups of } \mathcal{G}  \end{array}\bigg\} = \bigcup_{\ell = 1}^{\dim [x]_\mathcal{G}} K_\ell ( \mathcal{G} )
\end{equation}
where $[x]_\mathcal{G}$ is any principle orbit of $\mathcal{G}$. In many cases these inclusions are proper, as shown in the following example. 
\begin{eg}
    \label{eg:inclusions_in_SO3}
    Consider the case of $\mathcal{G} = SO(3)$. The subsets that are not subgroups include many sets that do not contain compositions, for example the set of rotations of angle less than $\pi / 4$. There are many subgroups that are not closed, such as the group of rotations generated by a rotation of angle 1 radian around any axis $u \in S^2$. This group is dense in the closed group $S^1_u$ but is not itself closed. Some closed subgroups are not connected, including all finite subgroups with more than one element, and the groups isomorphic to $O(2)$. Lastly the stratification of the last set is simple: there is only one group with $\dim [x]_G = 0$, the trivial group $I$; there is only one with $\dim [x]_G = 2$, $SO(3)$ itself; and the only closed connected subgroups with $\dim[x]_G = 1$ are the groups $S^1_u$ for some axis $u \in S^2$. 
\end{eg}
\end{minipage}
\vspace{0.3cm}

Since $K(\mathcal{G})$ is totally bounded under the metric $d_{\mathrm{Haus}(U)}$ each stratum $K_\ell( \mathcal{G} )$ is totally bounded too (as every subset of a totally bounded set is totally bounded, see lemma \ref{lem:total_bounded_subsets}). So take any $\delta > 0$. For each $\ell$, pick a finite set $\{ G_i^\ell \}_{i = 1}^{k_\ell} $ such that for all $G\in K_\ell( \mathcal{G})$, there exists a $G_i^\ell$ with $d_U(G ,G_i) \leq \delta$. We then take our estimate, the \textbf{(Global) Error Minimising Symmetry} of $G_{\max}(f, \mathcal{G})$ as:
\begin{equation}
    \hat{G}_\delta = \underset{G \in \cup_\ell \{ G_i^\ell \} }{ \mathrm{argmin} } \hat{\mathcal{E}} ( S_{\rho(G)} f_{n} ) = \underset{G \in \cup_\ell \{ G_i^\ell \} }{ \mathrm{argmin} } \sum_{ i = 1}^n ( Y_i' - S_{\rho^{X_i'}_m(G) } f_n( X_i' ) )^2 
\end{equation}
Where the empirical error is taken from an iid copy of the dataset $\mathcal{D}$ used in each $f_n$, given by $\mathcal{D}' = \{ (X_i', Y_i') \}_{ i = 1 }^n$. In the random design context this could be from a split of the data into two iid pieces.

\begin{eg}[Construction of $\delta$-cover of $K( SO(3) )$]
    \label{eg:delta_cover_so3}
    Since $dim [x]_{SO(3)} = 2$, there are only three strata to cover. The easiest is $K_2( SO(3) ) = \{ SO(3) \}$, which is covered finitely by itself. The subgroups in $K_0 ( SO(3) )$  are all finite, and the only connected finite group is the trivial group $I$. Thus we only need to really consider the stratum $K_1( SO(3) )$ which contains only subgroups isomorphic to the circle $S^1$, acting rotationally around the axes $u$ in the unit sphere $S^2$, so $K_1( SO(3) ) = \{ S^1_u : u \in S^2 \}$.    \\
\\
    Consider the distance between subgroups $S^1_u$ and $S^2_v$. For all $g \in S^1_u$ with angle $\phi \in [0, \pi)$, set $h_g \in S^1_v$ as the rotation around $v$ with the same angle of rotation. Then consideration of the real part of the quaternions representing these rotations, we can find $d_{SO(3)}( g, h_g ) = 2^{-1/2} \| \log (g^{-1} h_g ) \|_F = \theta$ where $\theta$ is given by:
\begin{equation}
    \cos(\theta / 2) = \cos(\phi/2)^2 + \langle u, v \rangle \sin(\phi/2)^2 = 1 - (1 - \langle u, v \rangle) \sin( \phi/2 )^2 \geq \langle u, v \rangle 
\end{equation}
when $\langle u, v \rangle \geq 0$. This implies that $\theta \leq 2 \arccos( \langle u, v \rangle )$ in this case, i.e., less than twice the angle between $u$ and $v$. Thus we have:
\begin{equation}
    \sup_{g \in S^1_u} \inf_{h \in S^1_v} d_{SO(3)}( g, h ) \leq \sup_{g \in S^1_u} d_{SO(3)} (g, h_g) \leq 2 \arccos( \langle u, v \rangle) 
\end{equation}
Hence the $\delta$-grid of $K_1( SO(3) )$ is easily given by $\{ S_{u_{ij}}^1 \}$ where $\{ u_{ij} \}$ forms a $2\delta / \pi$ grid of the unit sphere $S^2$ under the angular metric. These $u_{ij}$ can in turn be generated from a $\delta / \pi$ grid of angles in spherical coordinates $(\phi_i, \theta_i) \in [0, 2\pi) \times [0, \pi]$, with $u_{ij} = ( \sin \theta \cos \phi, \sin \theta \sin \phi, \cos \phi )$. This is depicted in figure \ref{fig:stratification_of_K_SO3}. 
\end{eg}

\begin{figure}[h!]
    \centering
    \begin{tikzpicture}
        \draw (0,0) -- (8,0) -- (8,6) -- (0,6) -- (0,0);
        \draw (8,6) node[above left]{Subsets of $SO(3)$};
        \draw (4, 3) ellipse (3 and 2);
        \draw (4, 5) node[above]{Closed Connected Subgroups};
        \draw (1.4, 4) -- (6.6, 4); 
        \draw (1.4, 2) -- (6.6, 2); 
        \draw (6.1,4.5) node[right]{$K_2( SO(3) )$};
        \draw(7,3) node[right]{$K_1( SO(3) )$};
        \draw (6.1,1.5) node[right]{$K_0( SO(3) )$};
        \draw[fill] (4,4.5) circle (0.07cm) node[left]{$SO(3)$};
        \draw[fill] (6,2.5) circle (0.07cm) node[left]{$S^1_{u_1}$};
        \draw[fill] (3,3.5) circle (0.07cm) node[left]{$S^1_{u_2}$};
        \draw[fill] (2,2.3) circle (0.07cm) node[left]{$S^1_{u_3}$};
        \draw[fill] (3.7,2.6) circle (0.07cm) node[right]{$S^1_{u_4}$};
        \draw[fill] (5,3.3) circle (0.07cm) node[left]{$S^1_{u_5}$};
        \draw[fill] (4,1.5) circle (0.07cm) node[left]{$I$};
        \draw[|-|, dashed] (3.1,3.4) -- (2.1,2.2);
        \draw (2.5, 2.9) node[below right]{$\leq \delta$};
    \end{tikzpicture}
    \caption{Example of the search space of the error minimising symmetry when $\mathcal{G} = SO(3)$, continuing from examples \ref{eg:inclusions_in_SO3} and \ref{eg:delta_cover_so3}. As $\delta$ decreases, the middle stratum gains more points in the $\delta$-cover.   }
    \label{fig:stratification_of_K_SO3}
\end{figure}
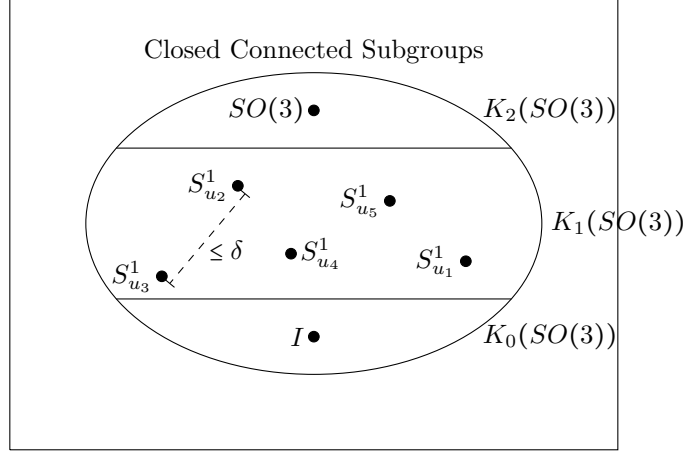

\subsection{Global Rates for the Best Symmetric Estimator }

\label{ssec:global_sym_rates}

We now can study the properties of the \textbf{Best Symmetric Estimator} of $f$: 
\begin{equation}
    \hat{f} = S_{ \rho( \hat{G}_{\delta_n} ) } f_n 
\end{equation}
where $f_n$ is any estimator satisfying assumption set (E), the partial symmetrisation operators are constructed as per section \ref{ssec:par_sym_ests}, $\delta_n$ is a deterministically chosen hyperparameter, and $\hat{G}_{\delta_n}$ is the global error minimising symmetry as per section \ref{ssec:error_min_syms}. We first consider the case where $\mathcal{X}$ is compact, as with $[0,1]^d$, $S^d$, and $\mathbb{T}^d$. In this case the Best Symmetric Estimator achieves the desired rates of adaption to the maximal symmetry of $f$. 

\begin{thm}
    \label{thm:emp_minimiser_adaptive_rates}
    Let $\mathcal{X}$ be a compact space. Let $\delta_n = L_{\mathcal{G}}^{-1} \big( \frac{ \phi_n^{\beta, d - d^\mathcal{G} } }{2 L^2} \big)^{1/ 2 \min( \beta , 1 )} $. Let $\{ G_i^\ell \}_{i = 1}^{k_\ell}$ be a finite $\delta$-cover of the totally bounded sets $K_\ell( \mathcal{G} ) \subseteq K( \mathcal{G} )$ for $\ell = 0, \dots, \dim [x]_\mathcal{G}$. Let $\hat{G}_\delta$ be the minimiser of $\hat{\mathcal{E}}( f_{n,G} )$ over $\cup_\ell \{ G_i^D \}$. Then under assumptions (E) and (Q), the Best Symmetric Estimator $S_{ \rho (\hat{G}_{\delta_n})} f_n$ achieves the rate:
    \begin{equation}
        \sup_{f \in \mathcal{F}( L, \beta)} \frac{ \EE_f\big( \| S_{ \rho (\hat{G}_{\delta_n} )}  f_n - f \|_2^2 \big) }{ \phi_n^{\beta, d - d^{G_{\max}(f, \mathcal{G})} } } \leq 2 ( 1 + \sup_{G \in K(\mathcal{G}) } C_G ) 
    \end{equation}

\end{thm}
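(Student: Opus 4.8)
The plan is to combine three ingredients already in hand: (i) the point-wise / integrated convergence of each $S_{\rho(G)}f_n$ to its asymptotic bias $S_{\rho(G)}f$ at the fast rate $\phi_n^{\beta,d-d^G}$ (Proposition \ref{prop:point_wise_error_bound} and Theorem \ref{thm:MISPE_tilde_f}, specialised to $\Omega=\mathcal{X}$ since $\mathcal{X}$ is compact); (ii) the H\"older-in-the-group control of the asymptotic bias, namely $\|S_{\rho(G)}f - f\|_\infty \leq L L_{\mathcal{G}}^\alpha d_{\mathrm{Haus}(U)}(G,H)^\alpha$ whenever $f\in L^2_H(\mathcal{X})$, from Proposition \ref{prop:close_groups} with $\alpha=\min(\beta,1)$; and (iii) the fact that $\hat G_{\delta_n}$ minimises the empirical error $\hat{\mathcal{E}}(S_{\rho(G)}f_n)$ over a finite $\delta_n$-cover of $\cup_\ell K_\ell(\mathcal{G})$, computed on an independent copy $\mathcal{D}'$ of the data. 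The choice $\delta_n = L_{\mathcal{G}}^{-1}\big(\phi_n^{\beta,d-d^\mathcal{G}}/2L^2\big)^{1/2\alpha}$ is engineered precisely so that $L L_{\mathcal{G}}^\alpha \delta_n^\alpha \leq \big(\phi_n^{\beta,d-d^\mathcal{G}}/2\big)^{1/2}$, i.e. the squared bias incurred by snapping a subgroup to the nearest cover element is at most $\tfrac12\phi_n^{\beta,d-d^\mathcal{G}}\leq \tfrac12\phi_n^{\beta,d-d^{G_{\max}}}$.

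The first step is to locate a good element of the cover. Write $G_0 = G_{\max}(f,\mathcal{G})$ and let $G_0'$ be the identity component of $G_0$, which lies in $K_{d^{G_0}}(\mathcal{G})$ and still has $f$ invariant; pick $\tilde G$ in the $\delta_n$-cover with $d_{\mathrm{Haus}(U)}(\tilde G, G_0')\leq \delta_n$. By Proposition \ref{prop:close_groups}, $\|S_{\rho(\tilde G)}f - f\|_2^2 \leq \|S_{\rho(\tilde G)}f - f\|_\infty^2 \leq L^2 L_{\mathcal{G}}^{2\alpha}\delta_n^{2\alpha} \leq \tfrac12\phi_n^{\beta,d-d^{G_0}}$ (using $d^\mathcal{G}\geq d^{G_0}$ so $\phi_n^{\beta,d-d^\mathcal{G}}\leq\phi_n^{\beta,d-d^{G_0}}$). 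Combined with Theorem \ref{thm:MISPE_tilde_f} (on $\mathcal{X}$ compact), $\EE\|S_{\rho(\tilde G)}f_n - f\|_2^2 \leq 2\EE\|S_{\rho(\tilde G)}f_n - S_{\rho(\tilde G)}f\|_2^2 + 2\|S_{\rho(\tilde G)}f - f\|_2^2 \leq (2C_{\tilde G}+1)\phi_n^{\beta,d-d^{G_0}}$, so there is at least one cover element whose risk already has the target order. The second step is to transfer this to the empirical minimiser: since $\hat G_{\delta_n}$ minimises $\hat{\mathcal{E}}$ over the cover, $\EE\,\hat{\mathcal{E}}(S_{\rho(\hat G_{\delta_n})}f_n) \leq \EE\,\hat{\mathcal{E}}(S_{\rho(\tilde G)}f_n)$, and because $\mathcal{D}'$ is independent of the $f_n$'s, $\EE\big(\hat{\mathcal{E}}(S_{\rho(G)}f_n)\mid f_n\big) = \|S_{\rho(G)}f_n - f\|_2^2 + \sigma^2$ for every fixed $G$; then take expectations. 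One must handle the fact that $\hat G_{\delta_n}$ is random and correlated with $\mathcal{D}'$ by the usual finite-class argument — but here the cover is finite and, crucially, $\hat{\mathcal{E}}(S_{\rho(G)}f_n) - \|S_{\rho(G)}f_n-f\|_2^2 - \sigma^2$ is a mean-zero average over $\mathcal{D}'$ for each of the finitely many fixed $G$, so uniformly over the cover the deviation is controlled and the minimiser inherits (up to the factor $2$ in the statement) the risk bound of the best cover element.

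Carrying this through gives $\EE\|S_{\rho(\hat G_{\delta_n})}f_n - f\|_2^2 \leq 2(1+\sup_{G\in K(\mathcal{G})}C_G)\,\phi_n^{\beta,d-d^{G_{\max}(f,\mathcal{G})}}$, which is the claim after dividing through. I expect the main obstacle to be step two: making rigorous the passage from "the best fixed cover element has small risk" to "the empirical minimiser has small risk", since $\hat G_{\delta_n}$ depends on $\mathcal{D}'$. The clean way is to condition on the $f_n$-data, note that for each fixed $G$ the centred empirical error is an average of i.i.d.\ terms with conditional mean zero, and exploit that the cover (for fixed $n$) is a fixed finite set so one does not even need a uniform-deviation chaining argument — a union bound or simply the two-step $M$-estimator comparison lemma (van der Vaart, Ch.\ 5, as cited) suffices, with the slack absorbed into the constant $2$. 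A secondary technical point is checking that $f\in L^2_{G_0}(\mathcal{X})$ really does make $S_{\rho(\tilde G)}f$ close to $f$ via Proposition \ref{prop:close_groups}, i.e.\ that the proposition's hypothesis $f\in\mathcal{F}(L,\beta)\cap L^2_H(\mathcal{X})$ applies with $H=G_0$ (equivalently $H=G_0'$, using Lemma \ref{lem:closure_invariance} and connectedness); this is immediate since $f$ is by definition $G_{\max}(f,\mathcal{G})$-invariant.
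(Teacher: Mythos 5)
Your proposal takes essentially the same route as the paper's own proof: pick a cover element $\delta_n$-close to (the identity component of) $G_{\max}(f,\mathcal{G})$, decompose its risk into estimation error plus asymptotic bias, control these via Theorem \ref{thm:MISPE_tilde_f} and Proposition \ref{prop:close_groups} respectively, choose $\delta_n$ so the bias contribution is $\leq \phi_n^{\beta,d-d^{G_{\max}(f,\mathcal{G})}}$, and transfer to the empirical minimiser.

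The obstacle you flag in the transfer step is genuine, and it is worth knowing that the paper's own proof passes over it: after writing the standard $M$-estimation decomposition of $\|S_{\rho(\hat{G}_{\delta_n})}f_n - f\|_2^2$, the paper asserts that taking expectations yields a clean $\sigma^2 - \sigma^2$ cancellation. That cancellation is exact for the deterministic $G_i^D$ (for any fixed $G$, $\EE\,\hat{\mathcal{E}}(S_{\rho(G)}f_n) = R(S_{\rho(G)}f_n) + \sigma^2$ because $\mathcal{D}'$ is independent), but for the $\mathcal{D}'$-dependent $\hat{G}_{\delta_n}$ the analogous identity does not automatically hold. Writing $\Delta(G) = \hat{\mathcal{E}}(S_{\rho(G)}f_n) - \|S_{\rho(G)}f_n-f\|_2^2 - \sigma^2$, which is mean-zero given $f_n$ for each fixed $G$, the leftover term after the paper's cancellation is $-\EE\,\Delta(\hat{G}_{\delta_n})$, and the argmin is typically negatively correlated with $\Delta$. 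So one does need uniform control over the centred process $\{\Delta(G)\}$ on the cover. Your finite-class union-bound instinct is the right remedy, but the claim that the slack is absorbed into the constant $2$ is too quick: for a $\delta_n$-cover of size $k_n$ the uniform deviation is of order $\sqrt{\log k_n / n}$, and one must still check that this is dominated by $\phi_n^{\beta, d - d^{G_{\max}(f,\mathcal{G})}}$ — which is not automatic when $2\beta > d - d^{G_{\max}(f,\mathcal{G})}$ (where the target rate is faster than $n^{-1/2}$). In short, your proof is not weaker than the paper's; it simply makes explicit a step the paper treats as immediate.
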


Under many circumstances it is possible to bound $C_G$ uniformly over the subgroups in $K(\mathcal{G})$ as in the example below. For this to be true, it is sufficient that $c_\star(x) = c$ for all $x \in \mathcal{X}$ and $\EE( ( R_X^{[X]_{G}} )^{- d^G} ) < A$ for all $G \in K( \mathcal{G} )$.

\begin{eg}  
    Consider the case of $\mathcal{G} = SO(3)$ acting naturally on the unit sphere $S^2$ with a uniform $\mu = U(S^2)$. As shown in example \ref{eg:R_x} we know that $\EE( ( R_x^{[x]_{SO(3)}} )^{- \dim [x]_{SO(3)}} ) = 2$. The identity group has this expectation equal to $1$ because $R$ is constant. The closed connected subgroups with dimension 1 orbits are the groups $S^1_u$ each isomorphic to $SO(2)$. For regular points $x$ we have $R_x^{S^1_u} = \sqrt{ 1 - \langle x, u \rangle }$ because the orbits are circles of this radius around $u$. The inner product has the same distribution as the first coordinate of $X$ by rotational symmetry, which is uniform on $[-1,1]$ (a special case for $U(S^2)$). Thus the distribution of $R_X^{S^1_u}$ is triangular, with $\PP( \sqrt{ 1 - \langle x, u \rangle } < a ) = a^2 / 2$ for $a \in [0,\sqrt{2}]$ and thus has a linear density function over this region. The reciprocal expectation is thus simply
    \begin{equation}
        \EE\big( (R_X^{S^1_u} )^{-1} \big) = \int_{0}^{\sqrt{2}} a^{-1} f_{R_X^{S^1_u}}(a) \dd a = \int_{0}^{\sqrt{2}} \dd a = \sqrt{2}   
    \end{equation}
    This constant is greater than for the larger action because it has more singularities (i.e., the line through $u$ rather than just the origin).
\end{eg}

\subsection{Local Rates for the Best Locally Symmetric Estimator }

In the case that $\mathcal{X}$ is not compact, we can still establish a similar local integrated rates. Instead of minimising the empirical risk, we minimise over a local estimate of the integrated error of $S_{\rho_{\hat{G}_\delta}} f_n$ over a compact set $\Omega = \overline{\Lambda} \subseteq \mathcal{X}$ for a non-empty open set $\Lambda$. This is slightly more technical than the global case, but only because we have to account for the ``no data'' case. Specifically, we define the \textbf{Local Error Minimising Symmetry} as:
\begin{align}
    \label{eq:local_emp_min}
    \hat{G}_\delta(\Omega) = \begin{cases}
        \underset{ G \in \cup_\ell \{ G_i^\ell\} }{ \mathrm{argmin} }
             \hat{\mathcal{E}}_\Omega( S_{\rho(G)} f_n )   & \text{if } \sum_{i = 1}^n \mathbf{1}_{X_i' \in \Omega } > 0  \\
            I &\text{otherwise}
    \end{cases}
\end{align}
for every $x \in \mathcal{X}$, where:
\begin{align}
    \hat{\mathcal{E}}_\Omega ( S_{\rho(G)} f_n ) = \begin{cases}
        \frac{ \sum_{i = 1}^n \big( Y_i' - S_{\rho(G)} f_n (X_i') \big)^2  \mathbf{1}_{X_i' \in \Omega } }{ \sum_{i = 1}^n \mathbf{1}_{X_i' \in \Omega } } &\text{if}  \sum_{i = 1}^n \mathbf{1}_{X_i' \in \Omega } > 0 \\
        L & \text{ otherwise }
    \end{cases} 
\end{align}
and all other terms are as for the global error minimising symmetry. The conditions on our covariate design and bandwidth ensure that the number of terms in $\Omega$ is expected to grow with $n \mu(\Omega)$, so with high probability (i.e., exponentially increasing in $n$) we will have an estimate in the first case of equation (\ref{eq:local_emp_min}). When $\Omega = \overline{ B_\mathcal{X}(x,h) }$ this is effectively a Nadaraya-Watson estimator of the pointwise risk. This then leads to the \textbf{Best Locally Symmetric Estimator}, $S_{\rho( \hat{G}_\delta ( \Omega)  )} f_n$ which obtains rates in the following theorem.

\begin{thm}
    \label{thm:local_rates_LEMS}
    Let $\Omega$ be a compact subset that is the closure of an open subset of $\mathcal{X}$. Let 
    \begin{equation}
        \delta = L_{\mathcal{G}}^{-1} \bigg( \frac{ \phi_n^{\beta, d - d^\mathcal{G} } }{2 L^2} \bigg)^{1/ 2 \min( \beta , 1 )}
    \end{equation}
    Let $\{ G_i^\ell \}_{i = 1}^{k_\ell}$ be a finite $\delta$-cover of the totally bounded sets $K_\ell( \mathcal{G} ) \subseteq K( \mathcal{G} )$ for $\ell = 0, \dots, \dim [x]_\mathcal{G}$. Let $\hat{G}_\delta$ be the local error minimising symmetry. Under assumptions (E) and (Q), we have that  
    \begin{equation}
        \sup_{f \in \mathcal{F}(L, \beta) } \frac{  \EE\big( ( S_{\rho(\hat{G}_\delta (\Omega) )} f_n (X) - f(X) )^2 \mid X \in \Omega \big) }{ \phi^{\beta, d - d^{G_{\max}(f, \mathcal{G} )}}_n } \leq C_0 
    \end{equation}    
    where the constant $C_0$ is given by:
    \begin{equation}
        C_0 =  \sup_{G \in K(\mathcal{G} )} 2 C_{G, \Omega} + 2 + (2 C_{G, \Omega} + 4 L )\tfrac{ 2\beta}{\mu( \Omega) ( 2 \beta + d ) } 
    \end{equation}
    and $C_{G, \Omega}$ is as defined in section \ref{ssec:integrated_risk}. 
\end{thm}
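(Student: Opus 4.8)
The plan is to mirror the structure of the proof of the global result (Theorem \ref{thm:emp_minimiser_adaptive_rates}) but carefully handle the local conditioning on $\{X \in \Omega\}$ and the degenerate ``no data in $\Omega$'' event. Write $G_0 = G_{\max}(f,\mathcal{G})$ and let $G_0'$ be its identity component, so $G_0' \in K_{\ell_0}(\mathcal{G})$ with $\ell_0 = d^{G_0}$ and $f$ is still $G_0'$-invariant by Lemma \ref{lem:closure_invariance}. Let $G_*^{\ell_0}$ be the element of the $\delta$-cover $\{G_i^{\ell_0}\}$ within $d_{\mathrm{Haus}(U)}$-distance $\delta$ of $G_0'$. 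The proof splits the error of the Best Locally Symmetric Estimator into (i) a stochastic/empirical part showing the selected $\hat G_\delta(\Omega)$ has local empirical error not much worse than that of $G_*^{\ell_0}$, and (ii) a bias/variance part showing that symmetrising by $G_*^{\ell_0}$ is nearly as good as symmetrising by $G_0'$, with the discrepancy controlled by Proposition \ref{prop:close_groups}.

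First I would condition on the event $A_n = \{\sum_i \mathbf{1}_{X_i' \in \Omega} > 0\}$, which has probability at least $1 - (1-\mu(\Omega))^n$, i.e.\ fails with only exponentially small probability; on $A_n^c$ the estimator defaults to $S_{\rho(I)}f_n = f_n$, whose conditional risk given $X \in \Omega$ is $O(\phi_n^{\beta,d})$ by (E3), and this contributes a term of order $(1-\mu(\Omega))^n \cdot O(\phi_n^{\beta,d}) = o(\phi_n^{\beta, d - d^{G_0}})$, absorbed into the constant. On $A_n$, the standard $M$-estimation inequality (as in van der Vaart Ch.~5) gives, via the basic inequality for the minimiser, a bound of the form $\mathcal{E}_\Omega(S_{\rho(\hat G_\delta(\Omega))}f_n) \le \mathcal{E}_\Omega(S_{\rho(G_*^{\ell_0})}f_n) + (\text{empirical process fluctuation})$, where $\mathcal{E}_\Omega(\hat f) = \EE((\hat f(X) - Y)^2 \mid X \in \Omega, \mathcal{D})$ and the fluctuation is controlled because the cover $\cup_\ell\{G_i^\ell\}$ is \emph{finite} — so a union bound over the finitely many candidate subgroups, together with the conditional variance bounds implied by (E) and (Q) for each $S_{\rho(G)}f_n$ on $\Omega$ (Theorem \ref{thm:MISPE_tilde_f}), suffices; no chaining or covering-number machinery is needed beyond finiteness. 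The empirical Nadaraya–Watson-type normaliser $\sum_i \mathbf{1}_{X_i'\in\Omega}$ concentrates around $n\mu(\Omega)$, contributing the $\mu(\Omega)^{-1}$ factor visible in $C_0$.

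Then I would decompose $\EE((S_{\rho(G_*^{\ell_0})}f_n(X) - f(X))^2 \mid X\in\Omega)$ via the triangle inequality in $L^2(\mu \mid \Omega)$ into $2\,\EE((S_{\rho(G_*^{\ell_0})}f_n(X) - S_{\rho(G_*^{\ell_0})}f(X))^2 \mid X\in\Omega) + 2\,\EE((S_{\rho(G_*^{\ell_0})}f(X) - f(X))^2 \mid X\in\Omega)$. The first term is bounded by $2 C_{G_*^{\ell_0},\Omega}\,\phi_n^{\beta, d - \ell_0}$ by Theorem \ref{thm:MISPE_tilde_f} (note $d^{G_*^{\ell_0}} = \ell_0 = d^{G_0}$ since both lie in the same stratum). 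For the second (asymptotic bias) term, since $f$ is $G_0'$-invariant, Proposition \ref{prop:close_groups} applied with $H = G_0'$ gives the pointwise bound $|S_{\rho(G_*^{\ell_0})}f(x) - f(x)| \le L L_\mathcal{G}^\alpha\, d_{\mathrm{Haus}(U)}(G_*^{\ell_0}, G_0')^\alpha \le L L_\mathcal{G}^\alpha \delta^\alpha$ with $\alpha = \min(\beta,1)$; squaring and plugging in the chosen $\delta = L_\mathcal{G}^{-1}(\phi_n^{\beta, d - d^\mathcal{G}}/2L^2)^{1/2\alpha}$ makes this term $\le \phi_n^{\beta, d - d^\mathcal{G}} \le \phi_n^{\beta, d - d^{G_0}}$ (using $d^\mathcal{G} \ge d^{G_0}$ and monotonicity of $\phi_n^{\beta,\cdot}$ in the last argument). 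Collecting the pieces and taking the supremum over $G$ in the finite cover for the uniform constants $C_{G,\Omega}$ yields the stated $C_0$; the $4L$ and $\tfrac{2\beta}{2\beta+d}$ terms arise from bounding the empirical-process fluctuation and the normaliser concentration, respectively, using that $\|f\|_\infty \le L$ and the noise variance $\sigma^2$ cancels in the comparison $\hat{\mathcal{E}}_\Omega(\cdot) - \sigma^2$.

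The main obstacle I anticipate is step (i): making the $M$-estimation comparison rigorous when the ``data'' feeding $S_{\rho(G)}f_n(X_i')$ is not an i.i.d.\ sample with conditional mean $S_{\rho(G)}f$ — the orbit-averaging entangles the $f_n$ evaluations across points, exactly the subtlety flagged in the text before Section \ref{ssec:par_sym_ests}. The resolution is to use the independent copy $\mathcal{D}'$ for the empirical error so that, conditionally on the original $\mathcal{D}$ (hence on $f_n$), the summands $(Y_i' - S_{\rho(G)}f_n(X_i'))^2\mathbf{1}_{X_i'\in\Omega}$ \emph{are} i.i.d., and the only randomness left to control is over $\mathcal{D}'$; the fluctuation bound then follows from a second-moment computation plus the finite union bound, with the $\phi_n$-rate terms on the ``truth'' side supplied by Theorem \ref{thm:MISPE_tilde_f} applied conditionally. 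One must also check that conditioning on $X\in\Omega$ preserves the hypotheses of Theorem \ref{thm:MISPE_tilde_f} (the measure $\mu(\cdot\mid\Omega)$ still satisfies the lower volume bound on $\Omega$ since $\Omega = \overline{\Lambda}$ has nonempty interior, and (Q2) is assumed to survive the conditioning), which is routine but should be stated explicitly.
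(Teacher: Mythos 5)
Your overall scaffolding — split on the event that $\Omega$ contains at least one $X_i'$, compare the minimiser against the cover element $G_*^{\ell_0}$ nearest to $G_{\max}(f,\mathcal{G})$, use the sum-of-squares decomposition with Theorem \ref{thm:MISPE_tilde_f} for the estimation error and Proposition \ref{prop:close_groups} with the stated $\delta$ for the approximation bias — matches the paper. But two parts of step (i) diverge from what the paper actually does and would not, as written, produce the theorem's constant $C_0$.

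First, the paper does not control an ``empirical-process fluctuation'' by a union bound over the cover. It instead uses, conditional on the event $\Xi'$ that $\Omega$ is nonempty, that $\EE\bigl(\hat{\mathcal{E}}_\Omega(S_{\rho(G)}f_n)\mid f_n,\Xi'\bigr) = \sigma^2 + \EE\bigl((S_{\rho(G)}f_n(X)-f(X))^2\mid f_n, X\in\Omega\bigr)$ for every \emph{fixed} $G$: the add-and-subtract terms in the basic ERM inequality then contribute $\sigma^2-\sigma^2$ in expectation and vanish, with no surviving stochastic-process term. Your union-bound route is problematic because the cover size $k_\ell$ grows as $\delta_n\to 0$, so a naive union bound would introduce a $\log(\sum_\ell k_\ell)$ factor that does not appear in $C_0$; you would have to show that extra factor is absorbable, which is a nontrivial addition you have not supplied.

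Second, the $\mu(\Omega)^{-1}$ factor in $C_0$ does not come from concentration of the empirical normaliser $\sum_i\mathbf{1}_{X_i'\in\Omega}$. It comes from the degenerate branch: $\PP(\Xi'^C) = (1-\mu(\Omega))^n \leq \exp(-n\mu(\Omega))$, and the exponential is then traded for a polynomial factor $\tfrac{2\beta}{\mu(\Omega)(2\beta+d)}$ via the elementary Lemma \ref{lem:exp_poly_bound}. You correctly observe that the empty-$\Omega$ contribution decays exponentially and can be absorbed, but you should invoke this lemma explicitly; the $4L$ in $C_0$ arises there from bounding $\EE\bigl((S_{\rho(G)}f(X)-f(X))^2\mid X\in\Omega\bigr)\leq 4L^2\leq 4L$ (together with the default value $L$ of $\hat{\mathcal{E}}_\Omega$), not from an empirical-process fluctuation bound. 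With these two corrections the rest of your argument aligns with the paper's.
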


\section{Finite Sample Performance}

\label{sec:experiments}

We now examine the finite sample performance of best symmetric estimators using synthetic data in the contexts of the main examples of this paper, namely 3D Rotations and translations. All code can be found on GitHub at \href{https://github.com/lchristie/M_estimators_of_symmetries}{\cblu https://github.com/lchristie/M\_estimators\_of\_symmetries}.

\subsection{3D Rotations}

\label{ssec:so3_experiments}

Suppose that $\mathcal{X} = \overline{B_{\RR^3}(0, 1)}$, the closed unit ball in $\RR^3$. Let $\mathcal{G} = SO(3)$ (a well known compact Lie group) act naturally by matrix multiplication on $\mathcal{X}$ as in example \ref{eg:3D_rots}. Suppose that $\mu$ is the uniform measure on $\mathcal{X}$, so that $c_x$ is the reciprocal of the Lebesgue volume of $\mathcal{X}$ for all $x$. To estimate the global error minimising symmetry we need:
\begin{enumerate}
    \item To show that this action is $L_{SO(3)}$-Lipschitz;
    \item To construct a $\delta$-cover of each $K_\ell( SO(3) )$.
\end{enumerate}
For this group and covariate space these are done in the examples \ref{eg:lipschitz_action} and \ref{eg:delta_cover_so3} in the previous sections.\\ 
\\
We have used a local constant estimator (LCE), defined explicitly in \ref{app:lce_properties}, originally proposed in \cite{nadaraya1964estimating, watson1964smooth}), with a rectangular kernel as our $f_n$. We consider the holder class $\mathcal{F}( L, \beta = 1)$ and simulated draws of datasets with $Y_i = f(X_i) + \epsilon_i$ where $\epsilon_i \iid N(0, \sigma^2)$ and $X_i \iid U( \mathcal{X} )$, for sample sizes $n \in \{ 30, 50, 75, 100, 150, 200, 300 \} $. We sample two independent copies of the dataset for each simulation, one used to estimate $f$ and the other used to estimate $G_{\max}(f, \mathcal{G})$ for the symmetrised estimator. The baseline LCE uses the union of these datasets to estimate $f$. We have used a bandwidth $h_n^G = n^{-1 / (2\beta + d - d^G) }$ for each partially symmetrised estimator depending on $G \leq \mathcal{G} = SO(3)$, and the minimax optimal $h = h_n^I$ for the baseline LCE. When sampling from any subgroup of $\mathcal{G}$ we use the uniform distribution, as per the results in section \ref{sssec:compact_groups_comps}. We have used 3 choices of $f$, with varying levels of symmetry:

\begin{center}
\begin{tabular}{l|l}
    Regression function & Maximal Symmetry \\ \hline \hline
     $f_1(X) = \cos( \| X \|_2 )$ & $SO(3)$ \\
     $f_2(X) = \cos( \sqrt{ X^2_2 + X^3_2 } )$ & $S^1_x$ \\
     $f_3(X) = X_1^2 + X_2 - 0.6 X_3$ & $I$
\end{tabular} 
\end{center}

We compared an estimated risk of the baseline LCE, $\tfrac{1}{K} \sum_{i = 1}^{K} (f_n(X_i') -f(X_i')  )^2$ for $K = 200$, against the symmetrised LCE, $\tfrac{1}{K} \sum_{i = 1}^{K} ( S_{\hat{G}} f_n (X_i') - f(X_i') )^2$ for $X_i' \iid \mu$ are independent of both the data used to the iid copy used estimate $f_n$ and $\hat{G}_\delta$. The average results of 30 trials for each sample size are  plotted on a log-log scale in figure \ref{fig:so3_simulations}. \\

\begin{figure}[h]
    \centering
    \begin{tabular}{ccc}
         \includegraphics[scale=0.3]{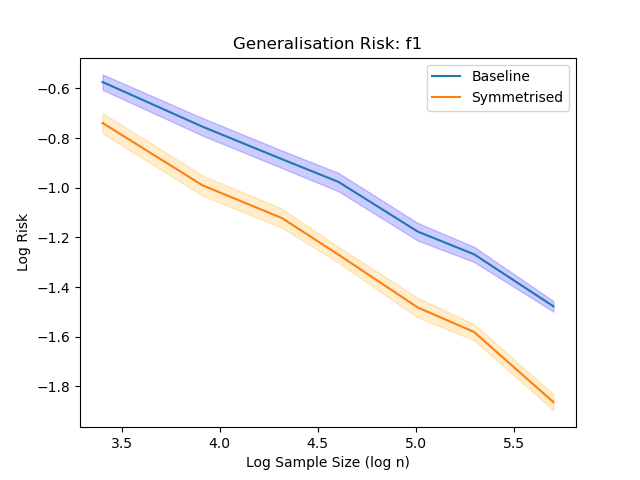} & 
         \includegraphics[scale=0.3]{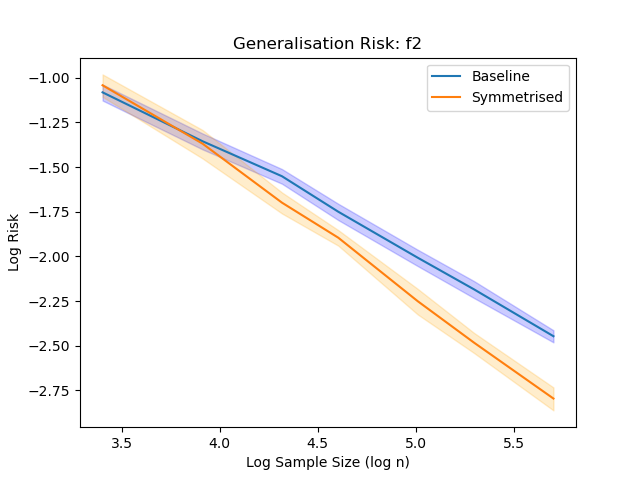} & 
         \includegraphics[scale=0.3]{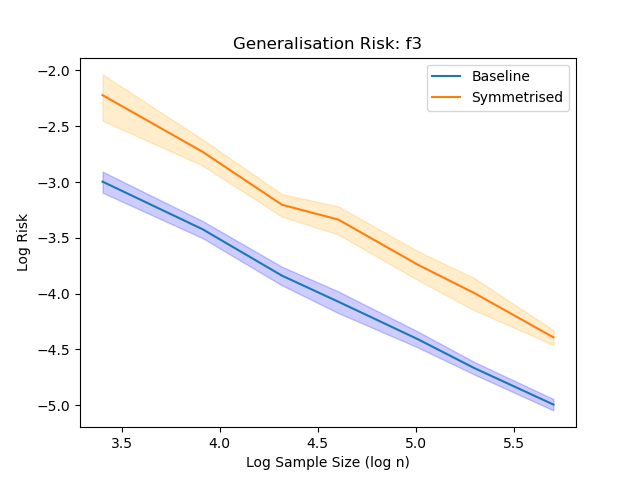} \\
        (a) $f_1$ & (b) $f_2$ & (c) $f_3$
    \end{tabular}
    \caption{Estimated risk for a baseline Local Constant Estimator $f_n$ in blue against the symmetrised $S_{\rho_{\hat{G}_\delta}} f_n$ in orange, as well as $95\%$ Wald confidence intervals for the true risk. Plots are on a log-log scale. }
    \label{fig:so3_simulations}
\end{figure}

We see that the true regression function has some level of symmetry, then our estimator does indeed obtain a faster rate than the baseline LCE. When there is no symmetry within the search group $\mathcal{G}$ our estimator performs slightly worse because of the variance in the estimated group and the half sized sample for for estimation of $f$, but does obtain the same rate of risk decay. Note that the slopes slightly increase with $n$ as the boundary effects diminish.

\subsection{Translational Symmetry}

Consider the situation of example \ref{eg:compact_sparsity}, where $\mathcal{X} =  ( \RR / \ZZ )^2$ is the (flat) 2-torus and $\mathcal{G} = ( \RR / \ZZ )^2$ acts by translation. Recall that if $f$ is invariant to subgroups of translation along an axis then $f$ does not depend on this coordinate of the covariate vector $X$, so captures sparsity in the covariates. As in the previous example, we first establish the required Lipschitz bound and $\delta$-grid.
\begin{enumerate}
    \item The action is $1$-Lipschitz because addition is Lipschitz;
    \item A $\delta$-grid can be constructed by considering the angles of lines through the origin. All closed one dimensional subgroups have this form with a rational angle $\theta$ measured from the $x$-axis, and the distance between them (under the natural metric on $\mathcal{G}$), is bounded as:
    \begin{equation}
        d_\mathcal{G} ( G, H ) \leq \sqrt{2} \sin( \theta_G - \theta_H ) \leq \sqrt{2} ( \theta_G - \theta_H )
    \end{equation}
    Thus a $\delta$-grid is given by lines of angle $\theta_i$ where $\theta_i = 360^\circ i / I$, $I = \lceil 360 / ( \sqrt{2} \delta ) \rceil$ and $i \in \{ 0, 1, \dots,  I  \}$. In practice we would like lines with low denominators so take the union of these grids over $J \leq I$, i.e.:
    \begin{equation}
        \{ G^1_j \}_{j = 1}^k = \bigcup_{J = 1}^I \{ S^1_{ 360^\circ i / J } : i = 0, \dots, J \} 
    \end{equation}
    where $S^1_{\theta}$ is the 1 dimensional group given by the line through the origin in $\mathbb{T}^2$ with angle $\theta$. 
\end{enumerate}
We have used the same base estimator as in the previous example, and simulated draws in the same regression model for three choice of regression functions $f$:

\begin{center}
\begin{tabular}{l|l}
    Regression function & Maximal Symmetry \\ \hline \hline
     $f_1(X) = 1 $ & $\mathcal{G} = \mathbb{T}^2$ \\
     $f_2(X) = \sin( 2 \pi X_1 )$ & $G_2 = S^1_{\theta = 0}$ \\
     $f_3(X) = \cos( 2 \pi ( X_1 - X_2 ) )  $ & $G_3 = S^1_{\theta = -\tfrac{\pi}{2} }  $
\end{tabular} 
\end{center}

The average estimated risks across 100 simulations are plotted in figure \ref{fig:T2_simulations}, where we see that the symmetrised estimator outperforms the baseline LCE in terms of rate for all scenarios, and so has lower absolute performance for $n \geq 150$. At very low sample sizes the performance of the Best Symmetric Estimator is reduced by the variability in the estimated group. 

\begin{figure}[h]
    \centering
    \begin{tabular}{ccc}
         \includegraphics[scale=0.3]{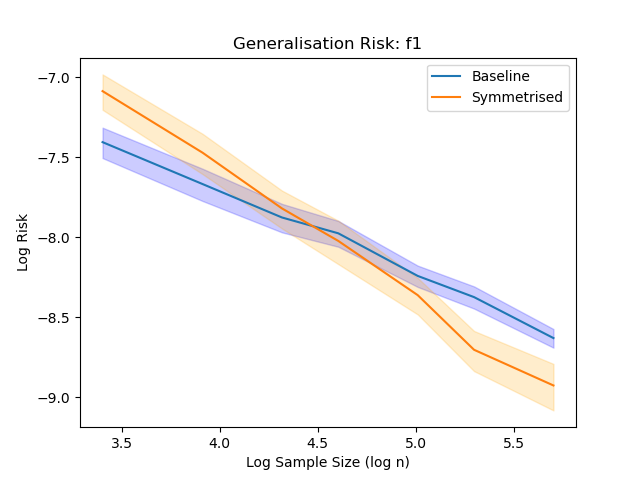} & 
         \includegraphics[scale=0.3]{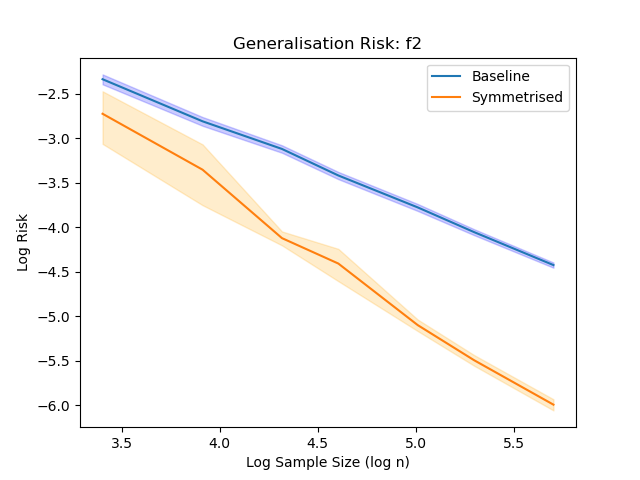} & 
         \includegraphics[scale=0.3]{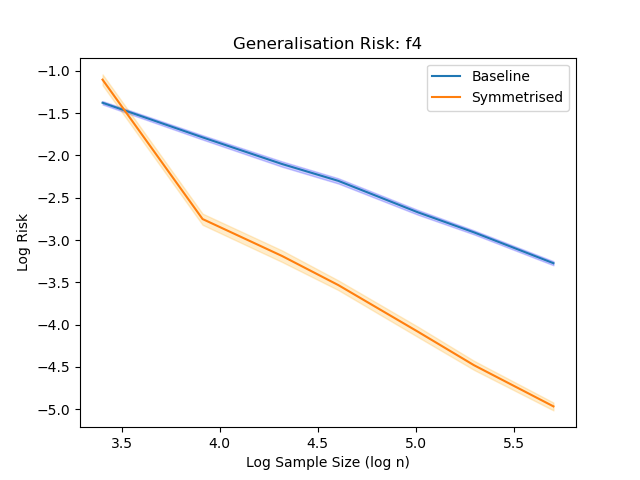} \\
        (a) $f_1$ & (b) $f_2$ & (c) $f_3$
    \end{tabular}
    \caption{Estimated generalisation risk for a baseline Local Constant Estimator $f_n$ in blue against the symmetrised $S_{\rho_{\hat{G}_\delta}} f_n$ when the same data is used in both steps of the $M$-estimation, as well as $95\%$ Wald confidence intervals for the true risk. Plots are on a log-log scale. }
    \label{fig:T2_simulations}
\end{figure}

\section{Discussion}

In this paper we have given sufficient conditions for the adaption to the symmetries present in a non-parametric regression function $f$. This means that we can learn patterns present in our non-parametric regression functions and use these patterns to better generalise. We now mention a few avenues for future work, and make a few remarks.

\subsection{Local Symmetries}

Whilst the requirement that $f$ has a global compact symmetry may be common, it is quite restrictive. A much more general condition is that $f$ obeys a \textit{local} symmetry; where the level set of $f(x)$ and the orbit $[x]_G$ intersect on a compact connected subset of the orbit containing $x$. This captures situations such as digit classification, where we cannot rotate a 6 arbitrarily before it becomes a 9 but we can make small rotational perturbations. The theorems in this paper, even relating to the local error minimising symmetry, require that the local orbits are the same everywhere, which means that they must be able to be patched together into a global symmetry. It would be interesting to understand how the best locally symmetric estimator performs when the requirement of global symmetry is dropped.

\subsection{Improvements on the constants}

One problem with the method presented is that it can be wasteful with the split between data used to estimate $f$ and used to estimate $G_{\max}(f, \mathcal{G})$, which means we see worse finite sample performance when $f$ has no symmetries. Alternatively, a leave one out cross-validation approach could also be examined. \\
\\
Secondly we have ignored non connected subgroups of $\mathcal{G}$ in our search for $G_{\max}(f, \mathcal{G})$. Such groups allow for further reduction in variance and potentially increased performance, at the cost of a much larger search for the error minimising symmetry.

\subsection{Lower Bounds on the estimation of maximal symmetries}

A natural question that arises in this paper is ``does $\EE( d_{\mathrm{Haus}(U)}( \hat{G}, G_{\max}(f, \mathcal{G} ) ) )$ converge uniformly to 0 for all $f \in \mathcal{F}(L, \beta)$, and if so, how quickly?''. Unfortunately, this question has a negative answer because we can have a sequence of non-$G$-invariant functions that approximate a $G$-invariant $f$ arbitrarily well in $L^2(\mathcal{X})$, and distinguishing the symmetries is at least as statistically challenging as distinguishing the functions. This is explicit in the following example.

\begin{eg}
Suppose that $\mathcal{X} = \overline{ B_{\RR^3}( 0, 1 ) }$ and $\mu = U( \mathcal{X})$. Consider the functions $f_3(x) = \exp( - ( x_1^2 + x^2_2) ) $ and $f_2 = \exp( - ( x_1^2 + x^3_2) )$, which are invariant to rotations around the $z$-axis ($S^1_z$) and $y$-axis ($S^1_y$) respectively, as well as reflections through the $x$-$y$ plane ($R_1$) and $x$-$z$ plane ($R_2$) respectively, giving their maximal symmetries. Both are in the H\"{o}lder class $\mathcal{F}(L, 2)$ for some $L$. This is also true for any multiple of these functions by $c \in (0,1]$. \\
\\
Suppose we have normally distributed noise of standard deviation 1, so  $Y_i \mid X_i \sim N( f(X_i), 1 )$. Consider that we can create hypotheses $P_0: f = c f_3$  and $P_1 : f = cf_2$ with $KL( P_0, P_1 )$ arbitrarily small Kullback-Leibler divergence.
\begin{align}
KL(P_0, P_1) &= \int_{\mathcal{X}} \int_{\RR} \log \big( \tfrac{p_0(x,y)}{p_1(x,y)} \big) p_0(x,y) \dd y \mu( \dd x ) \\
			 &= \int_{\mathcal{X}} \int_\RR \log \big( \tfrac{\phi( y - c f_x(x)) }{\phi( y - c f_y(x))} \big) \phi( y - c f_x(x) ) \dd y \mu( \dd x ) \\
			 &= \int_{\mathcal{X}} KL( N(c f_x(x), 1), N( c f_y(x), 1) ) \mu( \dd x ) \\
			 &= \tfrac{c^2}{2} \| f_x - f_y \|_2^2
\end{align}
using the fact that $KL( N(\mu_1, 1) , N(\mu_2, 1) ) = (\mu_1 - \mu_2)^2 / 2$ and writing $\phi$ for the density of a standard normal. So taking $c = \sqrt{  n^{-1} \| f_x - f_y \|_2^{-2} \log(4) } $ ensures $KL( P_0, P_1 ) \leq n^{-1} \log 2 $, and thus Le Cam's lemma \citep{tsybakov2008introduction} implies 
\begin{align}
	\inf_{\hat{G}} \sup_{f \in \mathcal{F}(L, 2) } \EE_f \big( d_{\mathrm{Haus}(SO(3))}( \hat{G}, G_{\max}(f) ) \big) \geq \frac{d_{\mathrm{Haus}(SO(3))}( G_{\max}( f_0, SO(3) ), G_{\max}( f_1, SO(3) ) ) }{16}
\end{align} 
Since this bound is a positive constant, we see this problem is essentially intractable.
\end{eg}

Fortunately, even if the estimated group is not exactly maximal it is usually useful in the estimation of $f$, with sufficiently low asymptotic bias relative to the speed of convergence.

\subsection{A Remark on Equivariance}
In this paper we have considered only invariant functions, for which $f(g \cdot x) = f(x)$. In the machine learning literature there is often consideration of the class of \textbf{equivariant} functions, for which 
\begin{equation}
    f( g \cdot X) = g \star f(X) 
\end{equation}
for some action $\star$ on the domain of $Y$ \citep{bronstein2021geometric}. However, in terms of statistical theory, these functions behave very similarly to invariant functions. This is because the function $f$ can be decomposed into a $G$-invariant function $\tilde{f} : \mathcal{X} / G \rightarrow \mathcal{Y} / G$ with $\tilde{f}(x) = [f(x)]_G$ (i.e., a function which picks the output's orbit) and another $G$-invariant function $r : \mathcal{X} / G \rightarrow G $ (that picks `starting' points) such that $f(x) = r(x) \star \iota_\star( \tilde{f}(x) )$. Both pieces are easier to learn that the original function, both because the input space is reduced to $\mathcal{X} / G$ and the output space is split into $\mathcal{Y} / G$ and $G$. However, because we are still estimating both pieces the errors will still combine - obscuring any additional benefit from the quotient on $\mathcal{Y}$.

\section{Proofs}
\label{sec:proofs}

\subsection{Proofs in Section 3}

\begin{proof}[Proof of Proposition \ref{prop:packing_nos}]
    Each side length can afford at least $\lceil R_x (2h)^{-1} \rceil$ points with $2h$ spacing, and we can do this in each of the dimensions of $T_x \mathcal{M}$. This gives the first inequality in:
    \begin{equation}
    m = | \{ a_i \}_{i = 1}^N | \geq \lceil R_x^{[x]_U} (2h)^{-1} \rceil^{\dim [x]_U }  \geq \lceil ( R_x^{[x]_U} (2h)^{-1} )^{\dim [x]_U } \rceil \geq \max( 1, ( R_x^{[x]_U} (2h)^{-1} )^{\dim [x]_U } )
    \end{equation}
    The second then follows as a basic fact about ceiling functions, as if $\{ x \} = x - \lfloor x \rfloor = 0$ this is trivial and otherwise:
    \begin{align}
        \lceil x^n \rceil &= \Big\lceil \sum_{k = 0}^n \binom{n}{k} \times  \lfloor x \rfloor^k \times \{ x \}^{n - k} \Big\rceil  \leq  \sum_{k = 0}^n \binom{n}{k} \times \lfloor x \rfloor^k \times  \lceil \{ x \}^{n - k} \rceil \leq  \sum_{k = 0}^n \binom{n}{k} \times \lfloor x \rfloor^k \times 1 = \lceil x \rceil^n
    \end{align}
    Now the  points $\{u_i \}$ have $d_\mathcal{X}( u_i, u_j ) \geq 2h$. To see this, let $n_i$ be the normal vector for which $u_i = a_i + n_i$ and $\langle n_i , a_i - x \rangle  = 0$ for each $i$ in the embedding space $\RR^q$. Then we have (with $d$ the metric on the embedding space $\RR^q$),
    \begin{equation}
        d(u_i, u_j)^2 = d( a_i, a_j )^2 + d( n_i, n_j )^2 \geq d(a_i, a_j)^2 \geq (2h)^2
    \end{equation}
    by the orthogonality conditions. Then the isometry of the embedding of $\mathcal{X}$ into $\RR^q$ ensures that the minimal path lengths in $\mathcal{X}$ connecting $u_i$ and $u_j$ are at least as long as the euclidean paths in the ambient space, giving the result. 
\end{proof}

We now provide the proof regarding the pointwise risk of partially symmetrised estimators, which is best analysed by considering the squared bias and the variance of the estimator separately. We will use the shorthand $p_{x,h} = \mu( B_{\mathcal{X}}(x, h) )$, and $g^{i} = g^{i}_{x,h}$. 

\begin{Lem}[Point-wise Bias of $S_{\rho_m^x} f_n$]
        \label{lem:pointwise_bias_disc_sym}
        Under the conditions of proposition \ref{prop:point_wise_error_bound}, we have:
        \begin{equation}
            \big| \EE( S_{\rho_m^x} f_n(x)  ) - S_{\rho_m^x} f (x) \big| \leq B h^\beta + \sup_{i \in [m]} \exp( -n p_{g^i \cdot x,h} ) \| f \|_\infty
        \end{equation}
        when $h = a n^{-1 / ( 2\beta + d - d^G ) }$, for all $n$ in the random design regime. In the fixed deisgn regime these bounds hold for sufficiently large $n$.
    \end{Lem}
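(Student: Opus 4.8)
The plan is to use that $\rho_m^x$ is the (deterministic) uniform distribution on the finite set $\{g^i_{x,h}\}_{i=1}^m$, so that $S_{\rho_m^x}f_n(x) = m^{-1}\sum_{i=1}^m f_n(g^i\cdot x)$ and likewise $S_{\rho_m^x}f(x) = m^{-1}\sum_{i=1}^m f(g^i\cdot x)$; the only randomness is through the dataset $\mathcal{D}$. By linearity of expectation and the triangle inequality, the bias is at most $m^{-1}\sum_i \big|\EE(f_n(g^i\cdot x)) - f(g^i\cdot x)\big|$, so it suffices to bound the bias of the base estimator at a single point $y = g^i\cdot x \in \mathcal{X}$ and then pass to the supremum over $i \in [m]$ (since an average of nonnegative terms is bounded by their supremum).

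For a fixed $y$, I would condition on $\mathcal{D}_X$ and split according to the $\mathcal{D}_X$-measurable event $A_y = \{\, B_\mathcal{X}(y,h)\cap\mathcal{D}_X \neq \emptyset \,\}$. On $A_y$, assumption (E3a) gives $\big|\EE(f_n(y)\mid\mathcal{D}_X) - f(y)\big| \leq Bh^\beta$. On the complement $A_y^c$, assumption (E2) forces $f_n(y) = 0$, hence $\EE(f_n(y)\mid\mathcal{D}_X) = 0$ and $\big|\EE(f_n(y)\mid\mathcal{D}_X) - f(y)\big| = |f(y)| \leq \|f\|_\infty$. Taking the outer expectation over $\mathcal{D}_X$ and using the tower property, $\big|\EE(f_n(y)) - f(y)\big| \leq Bh^\beta\,\PP(A_y) + \|f\|_\infty\,\PP(A_y^c) \leq Bh^\beta + \|f\|_\infty\,\PP(A_y^c)$.

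It remains to control $\PP(A_y^c)$. In the random design regime the $X_i$ are iid $\mu$, so $\PP(A_y^c) = (1 - \mu(B_\mathcal{X}(y,h)))^n = (1 - p_{y,h})^n \leq e^{-n p_{y,h}}$ by the elementary inequality $1-t\leq e^{-t}$; this uses nothing about the value of $h$, so the bound holds for every $n$ (the stated bandwidth $h = a n^{-1/(2\beta+d-d^G)}$ is only needed downstream, in Proposition \ref{prop:point_wise_error_bound}, to turn this into a rate). Substituting $y = g^i\cdot x$, averaging over $i$ and bounding the average by $\sup_{i\in[m]}$ yields the claimed inequality. In the fixed-design regime, the design conditions of Section \ref{ssec:stat_problem} guarantee that $B_\mathcal{X}(y,h)$ contains some $X_i$ for all sufficiently large $n$, so $A_y$ holds deterministically, the $\|f\|_\infty$ term is absent, and the bound reduces to $Bh^\beta$ for large $n$.

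The argument is essentially elementary; the only points requiring care are the bookkeeping over the no-data event and the fact that (E3a) is stated for $\mu$-almost every point. For the latter one should invoke the version of (E3a) valid at every $x$ (which local polynomial and local constant estimators satisfy, cf. Appendix \ref{app:lce_properties}), or else restrict to orbit points lying in the full-measure set on which the bias bound holds. I do not anticipate a genuine obstacle beyond this.
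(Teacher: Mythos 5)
Your argument is correct and is essentially identical to the paper's proof: both expand $S_{\rho_m^x}$ as the average over $\{g^i\cdot x\}$, apply linearity and the triangle inequality, condition on whether $B_\mathcal{X}(g^i\cdot x,h)\cap\mathcal{D}_X$ is empty, invoke (E3a) on the non-empty event and (E2) together with $\|f\|_\infty$ on the complement, and close with the elementary tail bound $\PP(N_{g^i\cdot x,h}=0)\leq e^{-np_{g^i\cdot x,h}}$. Your observations that the bandwidth plays no role in this lemma and that the $\mu$-a.e.\ qualifier in (E3a) needs the orbit points to lie in the full-measure set are both sound and slightly more careful than what the paper writes explicitly.
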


    \begin{proof}[Proof of Lemma \ref{lem:pointwise_bias_disc_sym}]
        Using the definition of the symmetrisation operator, we see:
        \begin{align}
            \big| \EE( S_{\rho_m^x} f_n(x)  ) - S_{\rho_m^x} f (x) \big| &= \Big| \EE\Big( \tfrac{1}{m} \sum_{i = 1}^m f_n( g^i \cdot x) - f( g^i \cdot x) \Big) \Big| \\
            &= \Big| \tfrac{1}{m} \sum_{i = 1}^m  \EE\big( f_n( g^i \cdot x) - f( g^i \cdot x) \big) \Big| \\
            &\leq \tfrac{1}{m} \sum_{i = 1}^m \big| \EE\big(  f_n( g^i \cdot x) - f( g^i \cdot x)  \big) \big| \\
            &= \tfrac{1}{m} \sum_{i = 1}^m \big| \EE\big(  \EE( f_n( g^i \cdot x) \mid \mathcal{D}_X ) - f( g^i \cdot x)  \big) \big| \\
            &\leq \tfrac{1}{m} \sum_{i = 1}^m  \EE\big( \big| \EE( f_n( g^i \cdot x) \mid \mathcal{D}_X ) - f( g^i \cdot x) \big| \big)  \\
            &=  \tfrac{1}{m} \sum_{i = 1}^m  \EE\big( \big| \EE( f_n( g^i \cdot x) \mid \mathcal{D}_X ) - f( g^i \cdot x) \big| \mid N_{g^i \cdot x, h} > 0) \big) \PP( N_{g^i \cdot x, h} > 0) + \\
            &\qquad \EE\big( \big| \EE( f_n( g^i \cdot x) \mid \mathcal{D}_X ) - f( g^i \cdot x) \big| \mid N_{g^i \cdot x, h} = 0) \big) \PP( N_{g^i \cdot x, h} = 0) \\
            &\leq  \tfrac{1}{m} \sum_{i = 1}^m  B h^\beta + |f( g^i \cdot x )| \exp( - n p_{ g^i \cdot x, h } )
        \end{align}
        where $N_{x,h} = | B_{\mathcal{X}}(x,h) \cap \mathcal{D}_X | $. This uses assumption (E3c) and lemma \ref{lem:tail_bounds_no_data}. In this fixed design regime these bounds hold for sufficiently large $n$ as per section \ref{ssec:stat_problem}. 
    \end{proof}

\begin{Lem}[Point-wise Variance of $S_{\rho_m^x} f_n$]
        \label{lem:pointwise_var_disc_sym}
        Under the conditions of proposition \ref{prop:point_wise_error_bound}, we have:
        \begin{equation}
            \mathrm{Var} ( S_{\rho_m^x} f_n(x)  )  \leq \tfrac{1}{m} \sup_{i \in [m] } \big( \tfrac{2 V}{ n p_{g^i \cdot x, h} } + V \exp( - n p_{g^i \cdot x, h} / 8 ) + m B^2 h^{2\beta} \big)
        \end{equation}
        when $h = a n^{-1 / ( 2\beta + d - d^G ) }$. This bound holds for all $n$ in the random deisgn regime, and for all sufficiently large $n$ in the fixed design regime. 
    \end{Lem}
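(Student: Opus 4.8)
\textit{Proof plan.} Write $S=S_{\rho_m^x}f_n(x)=m^{-1}\sum_{i=1}^m f_n(g^i\cdot x)$ with $g^i=g^i_{x,h}$, and set $N_i:=|B_\mathcal{X}(g^i\cdot x,h)\cap\mathcal{D}_X|\sim\mathrm{Bin}(n,p_{g^i\cdot x,h})$. The plan is to apply the law of total variance conditioning on the design $\mathcal{D}_X$, so that the two groups of terms in the claimed bound come from separate sources: the ``response noise'' $\EE[\mathrm{Var}(S\mid\mathcal{D}_X)]$ will give the $2V/(np_{g^i\cdot x,h})$ and $V\exp(-np_{g^i\cdot x,h}/8)$ terms, and the ``design noise'' $\mathrm{Var}(\EE[S\mid\mathcal{D}_X])$ will give the (squared-bias-type) $B^2h^{2\beta}$ term. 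The first structural input is Proposition \ref{prop:packing_nos}, which guarantees $d_\mathcal{X}(g^i\cdot x,g^j\cdot x)\ge 2h$ for $i\ne j$; by the strict-locality assumption (E1) this makes $f_n(g^1\cdot x),\dots,f_n(g^m\cdot x)$ \emph{conditionally independent} given $\mathcal{D}_X$, hence $\mathrm{Var}(S\mid\mathcal{D}_X)=m^{-2}\sum_i\mathrm{Var}(f_n(g^i\cdot x)\mid\mathcal{D}_X)$.

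For the conditional-variance term: by (E2) and (E3b), $\mathrm{Var}(f_n(g^i\cdot x)\mid\mathcal{D}_X)\le V/N_i$ when $N_i\ge1$ and equals $0$ when $N_i=0$, so it is always $\le V$. Split on $\{N_i\ge np_{g^i\cdot x,h}/2\}$, on which $V/N_i\le 2V/(np_{g^i\cdot x,h})$ and $N_i\ge1$, and bound the complement using the multiplicative Chernoff inequality $\PP(N_i<np_{g^i\cdot x,h}/2)\le\exp(-np_{g^i\cdot x,h}/8)$ together with the crude bound $\mathrm{Var}(\cdot\mid\mathcal{D}_X)\le V$; this gives $\EE[\mathrm{Var}(f_n(g^i\cdot x)\mid\mathcal{D}_X)]\le 2V/(np_{g^i\cdot x,h})+V\exp(-np_{g^i\cdot x,h}/8)$. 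Averaging over $i$ and dominating the average of $m$ terms by $m^{-1}$ times their supremum produces the first two summands.

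For the conditional-mean term, put $Z_i:=\EE[f_n(g^i\cdot x)\mid\mathcal{D}_X]-f(g^i\cdot x)$. Since $m^{-1}\sum_i f(g^i\cdot x)$ is deterministic, $\mathrm{Var}(\EE[S\mid\mathcal{D}_X])=\mathrm{Var}(m^{-1}\sum_i Z_i)\le\EE[(m^{-1}\sum_i Z_i)^2]\le m^{-1}\sum_i\EE[Z_i^2]$ by Jensen. On $\{N_i\ge1\}$ the bias bound (E3a) gives $|Z_i|\le Bh^\beta$; on $\{N_i=0\}$ we have $\EE[f_n(g^i\cdot x)\mid\mathcal{D}_X]=0$ by (E2), so $Z_i=-f(g^i\cdot x)$ with $|Z_i|\le\|f\|_\infty\le L$, and $\PP(N_i=0)\le\exp(-np_{g^i\cdot x,h})$. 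Hence $\EE[Z_i^2]\le B^2h^{2\beta}+L^2\exp(-np_{g^i\cdot x,h})$, and the exponentially small remainder is dominated by the other terms (for instance it is bounded by $V\exp(-np_{g^i\cdot x,h}/8)$ after enlarging $V$, or absorbed into the constants), leaving $B^2h^{2\beta}$. Combining the two pieces, and using that $B^2h^{2\beta}$ is constant in $i$ so that $B^2h^{2\beta}=m^{-1}\sup_i(mB^2h^{2\beta})$, yields the stated inequality. The bandwidth choice $h=an^{-1/(2\beta+d-d^G)}$ enters only to ensure $h<H$, and hence $p_{g^i\cdot x,h}>0$ and the (E) bounds, for every $n$ in the random design; in the fixed-design regime one additionally needs $n$ large enough that $N_i\ge np_{g^i\cdot x,h}/2\ge1$, whence the ``sufficiently large $n$'' caveat.

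The main obstacle is the dependence among the $m$ summands: they share the design and in fact satisfy $\sum_i N_i\le n$, so one cannot naively add variances. Conditioning on $\mathcal{D}_X$ turns the response-noise part into an honest sum of conditionally independent terms, but the conditional means remain dependent; the step I expect to be least sharp (though lossless at the level of rates) is the Jensen bound $\mathrm{Var}(m^{-1}\sum Z_i)\le m^{-1}\sum\EE[Z_i^2]$, which deliberately discards the negative correlations. The other delicate point is the blow-up of $1/N_i$ near $N_i=0$: controlling this is exactly what forces the Chernoff split and is responsible for the otherwise-mysterious $V\exp(-np_{g^i\cdot x,h}/8)$ term.
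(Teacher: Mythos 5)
Your proof follows the same overall strategy as the paper's: law of total variance conditioned on $\mathcal{D}_X$; conditional independence of the $f_n(g^i\cdot x)$ from strict locality (E1) together with the $2h$-separation guaranteed by Proposition~\ref{prop:packing_nos}; a Chernoff/no-data split for $\EE[\mathrm{Var}(\cdot\mid\mathcal{D}_X)]$ giving the $2V/(np_{g^i\cdot x,h})$ and $V\exp(-np_{g^i\cdot x,h}/8)$ pieces; and a separate bound on $\mathrm{Var}(\EE[\cdot\mid\mathcal{D}_X])$ giving $B^2h^{2\beta}$.

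The one genuine difference is in the conditional-mean term. The paper first bounds $\mathrm{Var}(m^{-1}\sum_i T_i)\le\sup_i\mathrm{Var}(T_i)$ via Cauchy--Schwarz on the covariances (where $T_i=\EE[f_n(g^i\cdot x)\mid\mathcal{D}_X]$), and then argues $\mathrm{Var}(T_i)\le\mathrm{Var}(T_i\mid\Xi_i^C)\le B^2h^{2\beta}$ by Popoviciu. You instead center $T_i$ at the constant $f(g^i\cdot x)$, apply Jensen, and bound each second moment $\EE[Z_i^2]$ directly, which yields $B^2h^{2\beta}+L^2\exp(-np_{g^i\cdot x,h})$. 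Your route is in fact the sharper one. The paper's step $\mathrm{Var}(T_i)\le\mathrm{Var}(T_i\mid\Xi_i^C)$ is not valid in general: writing $p=\PP(\Xi_i)$, $q=1-p$, $a=\EE[T_i\mid\Xi_i^C]$, $b=\EE[T_i^2\mid\Xi_i^C]$, one computes
\begin{equation*}
\mathrm{Var}(T_i)-\mathrm{Var}(T_i\mid\Xi_i^C)=(bq-a^2q^2)-(b-a^2)=p\bigl(a^2(2-p)-b\bigr),
\end{equation*}
which is \emph{positive} whenever $a^2(1-p)>\mathrm{Var}(T_i\mid\Xi_i^C)$, i.e.\ for all large $n$ once $f(g^i\cdot x)\ne0$, since $a\to f(g^i\cdot x)$ while $\mathrm{Var}(T_i\mid\Xi_i^C)\le B^2h^{2\beta}\to0$. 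So the extra exponentially small term you find is not a flaw in your argument; it is the term the paper's bound silently drops. Your proposed absorption ``by enlarging $V$'' is a bit loose, since $V$ is the fixed constant from (E3) and enlarging it would also enlarge the $2V/(np)$ term; the cleaner fix is simply to carry the additional $L^2\exp(-np_{g^i\cdot x,h})$ summand through to the proof of Proposition~\ref{prop:point_wise_error_bound}, which already combines Lemmas~\ref{lem:pointwise_bias_disc_sym} and~\ref{lem:pointwise_var_disc_sym} with a term of exactly this exponential shape coming from the bias lemma, so the final rate and the form of $C_{G,x}$ are unaffected up to constants.
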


    \begin{proof}[Proof of Lemma \ref{lem:pointwise_var_disc_sym}]
        First consider expanding the variance by the law of total variance:
        \begin{align}
            \mathrm{Var} ( S_{\rho_m^x} f_n(x)  ) &= \EE\big( \mathrm{Var}( S_{\rho_m^x} f_n(x) \mid \mathcal{D}_X ) \big) + \mathrm{Var}\big( \EE( S_{\rho_m^x} f_n(x) \mid \mathcal{D}_X ) \big)
        \end{align}
        We now consider each term. Let $N_{x,h} = | B_{\mathcal{X}}(x,h) \cap \mathcal{D}_X |$.
        \begin{align}
            \EE\big( \mathrm{Var}( S_{\rho_m^x} f_n(x) \mid \mathcal{D}_X ) \big) &= \EE\big( \mathrm{Var}( \tfrac{1}{m} \sum_{i = 1}^m f_n(g^i \cdot x) \mid \mathcal{D}_X ) \big) \\
            &= \EE\big( \tfrac{1}{m^2} \mathrm{Var}(  \sum_{i = 1}^m f_n(g^i \cdot x) \mid \mathcal{D}_X ) \big) \\
            &=  \tfrac{1}{m^2} \sum_{i = 1}^m \EE\big( \mathrm{Var}(   f_n(g^i \cdot x) \mid \mathcal{D}_X ) \big)
        \end{align}
        because of the strict locality of $f_n$, the construction of $\{ g^i \}_{i = 1}^m$ which ensures $2h$-spacing. Now we expand each expectation over the $\mathcal{D}_X$-events $\Xi_i = \{ N_{g^i \cdot x, h} = 0 \}$. 
        \begin{align}
            \EE\big( \mathrm{Var}(   f_n(g^i \cdot x) \mid \mathcal{D}_X ) \big) &= \EE( \mathrm{Var}(   f_n(g^i \cdot x) \mid \mathcal{D}_X ) \mid \Xi_i ) \PP( \Xi_i ) + \EE( \mathrm{Var}(   f_n(g^i \cdot x) \mid \mathcal{D}_X ) \mid \Xi_i^C ) (1 -  \PP( \Xi_i ) ) \\
            &= 0 \times \PP( \Xi_i ) + \EE\big( \tfrac{V}{N_{g^i \cdot x, h }} \mid N_{g^i \cdot x , h } > 0 \big) (1 -  \PP( \Xi_i ) ) \\
            &\leq \EE\big( \tfrac{V}{N_{g^i \cdot x, h }} \mid N_{g^i \cdot x , h } > 0 \big) \\
            &= \EE\big( \tfrac{V}{N_{g^i \cdot x, h }} \mid 0 < N_{g^i \cdot x , h } < n p_{g^i \cdot x, h} / 2 )  \big) \PP( 0 < N_{g^i \cdot x , h } < n p_{g^i \cdot x, h} / 2 ) ) + \\
            &\qquad \EE\big( \tfrac{V}{N_{g^i \cdot x, h }} \mid N_{g^i \cdot x , h } > n p_{g^i \cdot x, h} / 2 )  \big) \PP( N_{g^i \cdot x , h } > n p_{g^i \cdot x, h} / 2 ) ) \\
            &\leq V \PP( N_{g^i \cdot x , h } < n p_{g^i \cdot x, h} / 2 ) ) + \tfrac{2 V}{ n p_{g^i \cdot x, h} }  \\
            &\leq \tfrac{2 V}{ n p_{g^i \cdot x, h} } + V \exp( - n p_{g^i \cdot x, h} / 8 )
        \end{align}
        which uses the concentration inequality in Lemma \ref{lem:tail_bounds_no_data}. Thus we have:
        \begin{align}
            \EE\big( \mathrm{Var}( S_{\rho_m^x} f_n(x) \mid \mathcal{D}_X ) \big) &\leq \tfrac{1}{m} \sup_{i \in [m] } \big( \tfrac{2 V}{ n p_{g^i \cdot x, h} } + V \exp( - n p_{g^i \cdot x, h} / 8 ) \big)
        \end{align}
       
        For the second term, consider:
        \begin{align}
            \mathrm{Var}\big( \EE( S_{\rho_m^x} f_n(x) \mid \mathcal{D}_X ) \big) &= \mathrm{Var}\big( \EE( \tfrac{1}{m} \sum_{i = 1}^m f_n(g^i \cdot x) \mid \mathcal{D}_X ) \big) \\
            &= \mathrm{Var}\big(  \tfrac{1}{m} \sum_{i = 1}^m  \EE(f_n(g^i \cdot x) \mid \mathcal{D}_X ) \big) \\
            &\leq \sup_{i \in [m]} \mathrm{Var} \big( \EE(f_n(g^i \cdot x) \mid \mathcal{D}_X ) \big)
        \end{align}

        For ease, let $T_i$ be the random variable $\EE(f_n(g^i \cdot x) \mid \mathcal{D}_X )$. Note that if $N_{g^i\cdot x, h} >0$ then $T_i \in [f(x) - B(x)h^\beta, f(x) - B(x)h^\beta ]$ by assumption (E3) and otherwise $T_i = 0$. Thus we can use assumption (E2) and Popoviciu's inequality for bounded random variables to find:
        \begin{align}
            \mathrm{Var}( T_i ) &= \EE( T_i^2 \mid \Xi ) \PP( \Xi_i ) + \EE( T^2_i \mid  \Xi^C_i ) (1 - \PP( \Xi_i )) - ( \EE( T_i \mid \Xi_i ) \PP( \Xi_i ) + \EE( T_i \mid \Xi^C_i ) ( 1 - \PP(\Xi_i) ) )^2 \\
            &= 0 \times \PP( \Xi_i ) + \EE( T^2_i \mid  \Xi^C_i ) (1 - \PP( \Xi_i )) - ( 0 \times \PP( \Xi_i ) + \EE( T_i \mid \Xi^C_i ) ( 1 - \PP(\Xi_i) ) )^2 \\
            &\leq \mathrm{Var} (T_i \mid \Xi_i^C ) \\
            &\leq B^2 h^{2\beta} 
        \end{align}
        Therefore we have the variance bound:
        \begin{align}
            \mathrm{Var} ( S_{\rho_m^x} f_n(x)  ) \leq \tfrac{1}{m} \sup_{i \in [m] } \big( \tfrac{2 V }{ n p_{g^i \cdot x, h} } + V \exp( - n p_{g^i \cdot x, h} / 8 ) + m B^2 h^{2\beta}  \big)
        \end{align}
        as required.
    \end{proof}
    
\begin{proof}[Proof of Proposition \ref{prop:point_wise_error_bound}]
    The pointwise error of the partially symmetrised estimator has: 
    \begin{align}
        \EE \big( ( S_{\rho_m^x} f_n (x) - S_{\rho_m^x} f(x))^2 \big) &= \big( \EE( S_{\rho_m^x} f_n (x) ) - S_{\rho_m^x} f(x) \big)^2 + \mathrm{Var}\big( S_{\rho_m^x} f_n (x) \big) \\
        &\leq B^2 h^{2\beta} + \sup_{i \in [m]} \exp( -n p_{g^i \cdot x,h} ) \| f \|_\infty + \\
            &\qquad \tfrac{1}{m} \sup_{i \in [m] } \big( \tfrac{2 V }{ n p_{g^i \cdot x, h} } + V \exp( - n p_{g^i \cdot x, h} / 8 ) + m B^2 h^{2\beta}  \big) \label{eq:ptw_risk_bound_1}
    \end{align}
    Now with the choice of bandwidth $h = a n^{-1 / (2 \beta + d - d^G )}$, the bound on $m$ from Prop \ref{prop:packing_nos}, and the distribution property $\mu( B_{\mathcal{X}}(x,h) ) \in [c_x h^{d } , C_x h^{d } ]$ we have that:
    \begin{align}
        h^{2\beta} &\leq a^{2 \beta} \phi_n^{\beta, d - d^G }
    \end{align}
    and
    \begin{align}
        ( m n p_{g^i \cdot x, h} )^{-1} &\leq \tfrac{ (2h)^{d^G} }{ ( R_{x}^{[x]_U})^{d^G } n c_{g^i \cdot x} h^{ d } } \\
        &= \tfrac{ 2^{d^G} }{ c_{g^i \cdot x}  ( R_{x}^{[x]_U})^{d^G } } \times \phi_n^{\beta, d - d^G }
    \end{align}
    Thus we can substitute these back into equation (\ref{eq:ptw_risk_bound_1}) to find:
    \begin{align}
        \EE \big( S_{\rho_m^x} f_n (x) - S_{\rho_m^x} f(x))^2 \big) &\leq \sup_{i \in [m] }  \big( 2 B^2  a^{2 \beta} + + \tfrac{ 2^{1 + d^G}V }{ c_{g^i \cdot x}  ( R_{x}^{[x]_U})^{d^G } } \big) \phi_n^{\beta, \dim{\mathcal{X}} - d^G }  + \\
        &\qquad   (\|f \|_\infty +  V ) \exp( - n p_{g^i \cdot x, h} / 8 )  \\
        &\leq \sup_{g \in U}  2 \big( B(g \cdot x)^2  a^{2 \beta}  + \tfrac{ 2^{d^G}V }{ c_{g \cdot x}  ( R_{x}^{[x]_U})^{d^G } } \big) \phi_n^{\beta, \dim{\mathcal{X}} - d^G }  + \\
        &\qquad   (\|f \|_\infty +  V ) \exp\big( - \tfrac{ c_{g \cdot x} a^{d } }{8}  n^{\tfrac{ 2 \beta - d^G }{2 \beta + \dim\mathcal{X} - d^G } }\big) \\
        &\leq \sup_{g \in U} \Big( 2 \big( B^2  a^{2 \beta}  + \tfrac{ 2^{d^G}V }{ c_{g \cdot x}  ( R_{x}^{[x]_U})^{d^G } } \big) +  \\
        &\qquad \qquad ( \| f \|_\infty + V ) \big( \tfrac{ 16 \beta }{c_{g\cdot x} a^{d} ( 2 \beta - d^G ) } \big)^{ - \tfrac{ 2 \beta }{d}} \Big) \phi_n^{\beta, \dim{\mathcal{X}} - d^G }
    \end{align}
    The result follows from noting that $\sup_g c_{g\cdot x}^{-1} = 1 / \inf_{g} c_{g \cdot x} $ and that $\|f \|_\infty \leq L$ for all $f \in \mathcal{F} ( L, \beta )$.
\end{proof}

\begin{proof}[Proof of Theorem \ref{thm:MISPE_tilde_f}]
    See that the pointwise bound of Proposition \ref{prop:point_wise_error_bound} can be used with the conditional expectation tower law to find: 
    \begin{align}
        \EE(  (S_{\rho_m} f_n (X) - S_{\rho_m} f(X))^2 \mid X \in \Omega) &= \EE( \EE( (S_{\rho_m^X} f_n(X) - S_{\rho_m^X} f(X))^2 \mid X ) \mid X \in \Omega ) \\
            &\leq \EE( C_{G,X} \mid X \in \Omega ) \phi_n^{\beta, d - d^G }
    \end{align}
    Lastly we check the integrability of $C_{G,X}$. See that:
    \begin{align}
        \EE( C_{G,X} \mid X \in \Omega) &= \EE \Big( 2 \big( B^2  a^{2 \beta}  + \tfrac{ 2^{d^G}V }{ c_{*}(X)  ( R_{ X}^{[x]_U})^{d^G } } \big) + ( L + V ) \big( \tfrac{c_{*}(X) a^{d} ( 2 \beta - d^G ) }{ 16 \beta } \big)^{ \tfrac{ 2 \beta }{d}} \Big\mid X \in \Omega \Big) \\
        &=  2 B^2  a^{2 \beta}  + 2 \EE \big( \tfrac{ 2^{d^G}V }{ c  ( R_{ X}^{[x]_U})^{d^G } } \big) + ( L + V ) \EE \Big( \big( \tfrac{c_{*}(X) a^{d} ( 2 \beta - d^G ) }{ 16 \beta } \big)^{ \tfrac{ 2 \beta }{d}}  \Big\mid X \in \Omega \Big) \\
        &\leq 2 B^2  a^{2 \beta}  + 2 \big( \tfrac{ 2^{d^G}V }{ c  } \big) \EE( ( R_{ X}^{[x]_U})^{-d^G } \mid X \in \Omega ) + ( L + V ) \Big( \big( \tfrac{ c a^{d} ( 2 \beta - d^G ) }{ 16 \beta } \big)^{ \tfrac{ 2 \beta }{d}}  \Big)
    \end{align}
    Assumption (Q2) and it's conditioned hypothesis in this theorem then control the expectation term, giving the integrability of $C_X \mid X \in \Omega$.
\end{proof}

\begin{proof}[Proof of Corollary \ref{cor:MSIPE_of_syms}]
	First, note that using the shift invariance of the Haar measure we have that if $g \sim U(G)$, then $g g^i \overset{D}{=} g \sim U( G)$ for all $g^i$, so 
	\begin{align}
		S_G S_{\rho_m^x} \phi ( y )  = m^{-1} \sum_{i = 1}^m \EE( \phi ( (g g^i) \cdot y ) \mid \mathcal{D} ) = m^{-1} \sum_{i = 1}^m \EE( \phi ( g  \cdot y ) \mid \mathcal{D} ) = S_G \phi( y )
	\end{align}
 for all $\phi \in L^2(\mathcal{X} )$. 
	Thus we can say that almost surely: 
	\begin{align}
		\| S_G f_n - S_G f \|_2 &= \| S_G S_{\rho_m} f_n - S_G S_{\rho_m} f \|_2 \leq \| S_{\rho_m} f_n  - S_{\rho_m} f \|_2
	\end{align}
	using the fact that $S_G$ is a projection with operator norm $1$ in the last inequality. Lastly, we can then take expectations and use Proposition \ref{thm:MISPE_tilde_f} to get the stated result. 
\end{proof}

\begin{proof}[Proof of Corollary \ref{cor:MSIPE_f_hat_G}]
The estimator $\hat{f}_{n,G}$ is a Monte-Carlo estimate of $S_G f_n$, so we have:
\begin{align}
    \label{eq:hat_est_bound}
   \EE \| \hat{f}_{n,G } - S_G f \|_2^2 \leq 2 \EE \| \hat{f}_{n,G } - S_G f_n \|_2^2 + 2 \EE \|  S_G f_n - S_G f \|_2^2
\end{align}
using the sum of squares inequality $(a + b)^2 \leq 2 a^2 + 2 b^2$ (Lemma \ref{lem:squared_sum}). The first term describes the Monte-Carlo error of the approximation $\hat{f}_{n,G} \approx S_G f_n$, which is bounded:
\begin{align}
    \EE \| \hat{f}_{n,G } - S_G f_n \|_2^2 &= \EE \big( (  \hat{f}_{n,G }(X) - S_G f_n(X) )^2 \big) \\
    &= \EE\big( \EE( ( \hat{f}_{n,G}(X) - S_G f_n (X) )^2 \mid \mathcal{D}, X ) \big) \\
    &= \EE\big( \mathrm{Var}( \hat{f}_{n,G}(X) \mid \mathcal{D}, X) \big) \\
    &= \tfrac{1}{M} \EE\big( \mathrm{Var}( {f}_{n}(g \cdot X) \mid \mathcal{D}, X) \big) \\
    &= \tfrac{1}{M} \EE\big( ( {f}_{n}(g \cdot X) - S_G f_n (X) )^2 \big) \\
    &\leq \tfrac{3}{M}  \EE\big( \| {f}_{n} \circ (g \cdot)  -  f \circ (g \cdot) \|_2^2 + \| S_G f_n - S_G f \|_2^2 + \| ( S_G f ) \circ (g \cdot)  - f \circ (g \cdot)  \|_2^2 \big) \\
    &\leq \tfrac{6}{M} ( C \phi_n^{\beta, d} + \| f \|_2^2 ) \\
    &\leq \tfrac{6 C + 6 L^2}{M}
\end{align}
using the fact that $S_G f_n = \EE( \hat{f}_{n,G}(X) \mid \mathcal{D}, X )$, the independence of the $g_i$ samples, the sum of squares inequality $(a + b + c)^2 \leq 3( a^2 + b^2 + c^2)$, the $G$-invariance of $\mu$, and lastly the bounds Assumption (E3) and the H\"{o}lder ball bound on $\|f \|_2^2$. Thus we have:
\begin{align}
    \EE ( \| \hat{f}_{n,G } - S_G f \|_2^2 ) &\leq \tfrac{12}{M} (C + L^2 ) + 2 C_G \phi_n^{\beta, d - d^G } \\
    &\leq (12 C + 12 L^2 + 2 C_G ) \phi_n^{\beta, d - d^G }
\end{align}
when $M = n$, as required.
\end{proof}

\subsection{Proof in Section 4}

\begin{proof}[Proof of Proposition \ref{prop:close_groups}]
    Since $f$ is continuous, we know that $f = f_0$ in the definition of invariance. Thus for any such $f \in\mathcal{F}(L, \beta)$, $x \in \mathcal{X}$, and $g \sim \rho_{G}^x$ we have:
    \begin{align}
        ( ( S_{\rho_G^x} f )(x) - f(x) )^2 &= \EE( f(x) - f(g \cdot x) )^2 \\
            &= \EE( \inf_{h \in H \cap U} f(h \cdot x) - f(g \cdot x) )^2 \\
            &\leq L^2 \EE( \inf_{h \in H \cap U} d( h \cdot x, g \cdot x)^\alpha )^2 \\
            &\leq L^2 ( \sup_{g \in G \cap U} \inf_{h \in H \cap U} d( h \cdot x, g \cdot x)^{ \alpha} )^2 \\
            &\leq L^2 \sup_{g \in G \cap U} \inf_{h \in H \cap U} ( L_\mathcal{G} d_\mathcal{G} (g ,h ) )^{2 \alpha} \\
            &\leq L^2 L^{2\alpha}_\mathcal{G} d_{\mathrm{Haus}(U)}( G, H)^{2\alpha} 
    \end{align}
    giving the required inequality.
\end{proof}

\begin{proof}
    Suppose that $\dim [x]_{G_{\max}(f, \mathcal{G}) } = D$. Let $G_i^D$ be any group in $K^D$ with $d_{\mathrm{Haus}}( G_{\max}(f, \mathcal{G}), G_i^D) \leq \delta$ (of which there must be at least one). Then we have:
    \begin{align}
         \| S_{\rho_{\hat{G}_\delta}} f_{n}  -  f \|_2^2 &= \hat{\mathcal{E}}( S_{\rho_{\hat{G}_\delta}} f_{n} ) + ( \| S_{\rho_{\hat{G}_\delta}} f_{n}  -  f \|_2^2 - \hat{\mathcal{E}}( S_{\rho_{\hat{G}_\delta}}  f_{n} ) ) \\
         &\leq \hat{\mathcal{E}}( S_{\rho_{G_i^D}} f_{n} ) + ( \| S_{\rho_{\hat{G}_\delta}}  f_{n}  -  f \|_2^2 - \hat{\mathcal{E}}( S_{\rho_{\hat{G}_\delta}}  f_{n, h} ) ) \\
         &= \| S_{\rho_{G_i^D}}  f_n - f \|_2^2 + (\hat{\mathcal{E}}( S_{\rho_{G_i^D}}  f_{n} ) - \| S_{\rho_{G_i^D}}  f_n - f \|_2^2) + ( \| S_{\rho_{\hat{G}_\delta}} f_{n}  -  f \|_2^2 - \hat{\mathcal{E}}( S_{\rho_{\hat{G}_\delta}} f_{n} ) )
    \end{align}
    Taking expectations (over both the original data used for estimating $f_n$ and the independent copy used for the calculating $\hat{\mathcal{E}}$) then gives:
    \begin{equation}
        R( S_{\hat{G}_\delta} f_{n} ) \leq R( S_{G_i^D} f_{n} ) + \sigma^2 - \sigma^2 
    \end{equation}
    So we need only consider the properties of the group that best approximates $G_{\max}(f, \mathcal{G} )$. We have:
    \begin{align}
        R( S_{G_i^D} f_{n} ) &= \EE( \|  S_{\rho_{G_i^D}}  f_n - f \|_2^2 ) \\
            &\leq 2 \EE( \| S_{\rho_{G_i^D}}  f_n - S_{\rho_{G_i^D}} f \|_2^2 ) + 2 \| S_{\rho_{G_i^D}}  f -  f \|_2^2  \\
            &\leq 2 C_{G_i^D} \phi_{n}^{\beta, d - d^{G_i^D}} + 2 \int_\mathcal{X} ( S_{\rho_{G_i^D}^x}  f(x) -  f(x) )^2 \dd \mu(x) \\
            &\leq 2 C_{G_i^D} \phi_{n}^{\beta, d - d^{G_i^D}} + 2 \int_\mathcal{X} L^2 L_\mathcal{G}^{2\alpha} d_U( G_i^D, G_{\max}(f, \mathcal{G} )^{2\alpha} ) \dd \mu(x) \\
            &\leq 2 C_{G_i^D} \phi_{n}^{\beta, d - d^{G_i^D}} + 2 L^2 L_\mathcal{G}^{2\alpha} \delta^{2\alpha}
    \end{align}
    where $\alpha = \min( \beta, 1 )$. This comes from the sum of squares inequality $(a + b)^2 \leq 2 a^2 + 2 b^2$ (lemma \ref{lem:squared_sum}), Theorem \ref{thm:MISPE_tilde_f}, Proposition \ref{prop:close_groups}, and lastly the $\delta$-closeness of $G_i^D$ to $G_{\max}(f, \mathcal{G})$.   Now we have:
    \begin{align}
        2 L^2 L_{\mathcal{G}}^{2 \min ( \beta ,1 ) } \delta^{2 \min( \beta, 1) } &\leq 2 L^2 L_{\mathcal{G}}^{2 \min ( \beta ,1 ) } ( L_{\mathcal{G}}^{-1} \big( \frac{ \phi_n^{\mathcal{G}} }{2 L^2} \big)^{1/ 2 \min( \beta , 1 )} )^{2 \min ( \beta ,1 ) } \\
        &\leq \phi_n^{\beta, d - d^\mathcal{G} } \\
        &\leq \phi_n^{\beta, d - d^{G_{\max}(f, \mathcal{G})}}
    \end{align}
    Therefore we have:
    \begin{align}
        R( S_{ \rho_{\hat{G}_\delta}} f_n ) &\leq 2 C_G \phi_n^{\beta, d - d^{G_{\max}(f, \mathcal{G})} }  + 2  \phi_n^{\beta, d - d^{G_{\max}(f, \mathcal{G})}} \\
        &\leq 2 ( \sup_{G \in K(\mathcal{G}) } C_G + 1) \phi_n^{\beta, d - d^{G_{\max}(f, \mathcal{G})} }
    \end{align}
    as required.
\end{proof}

\begin{proof}[Proof of Theorem \ref{thm:local_rates_LEMS}]
    Let $G_i^D$ be the minimiser of $d( G_i^D, G_{\max}(f, \mathcal{D} ) )$ over $\cup_\ell \{ G_i^\ell \} $. First, we have the minimisation inequality:
    \begin{align}
        \EE\big( ( S_{\rho_{\hat{G}_\delta (\Omega)}} &f_n (X) - f(X) )^2 \mid f_n,  X \in \Omega \big) \\
        &= \hat{\mathcal{E}}_\Omega(  S_{\rho_{\hat{G}_\delta (\Omega)}} f_n ) + \EE\big( ( S_{\rho_{\hat{G}_\delta (\Omega)}} f_n (X) - f(X) )^2 \mid f_n,  X \in \Omega \big) - \hat{\mathcal{E}}_\Omega(  S_{\rho_{\hat{G}_\delta (\Omega)}} f_n ) \\
        &\leq \hat{\mathcal{E}}_\Omega(  S_{\rho_{G^D_i}} f_n ) + \EE\big( ( S_{\rho_{\hat{G}_\delta (\Omega)}} f_n (X) - f(X) )^2 \mid f_n,  X \in \Omega \big) - \hat{\mathcal{E}}_\Omega(  S_{\rho_{\hat{G}_\delta (\Omega)}} f_n ) \\
        &= \EE( ( S_{\rho_{G^D_i}}f_n(X) - f(X))^2 \mid f_n,  X \in \Omega ) + \\
        &\qquad \hat{\mathcal{E}}_\Omega(  S_{\rho_{G^D_i}} f_n ) - \EE\big( ( S_{\rho_{G^D_i}}f_n(X) - f(X))^2 \mid f_n, X \in \Omega \big) + \\
        &\qquad \EE \big( ( S_{\rho_{\hat{G}_\delta (\Omega)}} f_n (X) - f(X) )^2 \mid f_n, X \in \Omega ) - \hat{\mathcal{E}}_\Omega(  S_{\rho_{\hat{G}_\delta (\Omega)}} f_n )
    \end{align}
    Now conditioned on the event $\Xi'$ that $\Omega$ contains at least one $X_i'$, we have:
    \begin{align}
       \EE(  \hat{\mathcal{E}}_\Omega(  S_{\rho_{G}} f_n ) \mid f_n, \Xi' ) &= \sigma^2 + \EE\big( ( S_{\rho_{G}}f_n(X) - f(X))^2 \mid f_n, X \in \Omega \big) 
    \end{align}
    for all subgroups $G \in \cup_\ell \{ G_i^\ell \}$. In the complementary event $\Xi'^{C}$, we have $\hat{G}_\delta( \Omega) = I$ and so:
    \begin{align}
        \EE(  \hat{\mathcal{E}}_\Omega(  S_{\rho_{G}} f_n ) \mid f_n, \Xi'^{C} ) &= L
    \end{align}
    Thus the integrated error splits over these events as:
    \begin{align}
        \EE\big( ( S_{\rho_{\hat{G}_\delta (\Omega)}} &f_n (X) - f(X) )^2 \mid X \in \Omega \big) \\
            &= \EE\big( \EE\big( ( S_{\rho_{\hat{G}_\delta (\Omega)}} f_n (X) - f(X) )^2 \mid f_n,  X \in \Omega \big) \big) \\
            &= \EE\big( \EE\big( ( S_{\rho_{\hat{G}_\delta (\Omega)}} f_n (X) - f(X) )^2 \mid f_n,  X \in \Omega \big) \mid \Xi_i \big) \PP( \Xi_i ) + \\
            &\qquad \EE\big( \EE\big( ( S_{\rho_{\hat{G}_\delta (\Omega)}} f_n (X) - f(X) )^2 \mid f_n,  X \in \Omega \big) \mid \Xi_i^C \big) \PP( \Xi_i^C ) \\
            &\leq \big(  \EE( ( S_{\rho_{G^D_i}}f_n(X) - f(X))^2 \mid f_n,  X \in \Omega ) + \sigma^2 - \sigma^2 \big) \\
            &\qquad \big( \EE( ( S_{\rho_{\hat{G}_\delta( \Omega )}}f_n(X) - f(X))^2 \mid f_n,  X \in \Omega ) + L - L \big) \PP( \Xi_i'^C ) \\
            &\leq  2 C_{G^D_i, \Omega} \phi_n^{\beta, d - D } + 2 \EE( (S_{\rho_{G^D_i}} f (X) - f( X) )^2 \mid X \in \Omega) + \\
            &\qquad \sup_{G \in K(\mathcal{G})} ( 2 C_{G, \Omega} \phi_n^{\beta, d} +2 \EE( (S_{\rho_{G}} f (x) - f( X) )^2 \mid X \in \Omega) )\PP( \Xi'^C )  \\
            &\leq 2 C_{G^D_i, \Omega} \phi_n^{\beta, d - D } + 2 L^2 L_\mathcal{G}^{2 \alpha} \delta^{2\alpha} +  \sup_{G \in K(\mathcal{G})} ( 2 C_{G, \Omega} \phi_n^{\beta, d} + 4 L )\PP( \Xi'^C ) \\
            &\leq 2 C_{G^D_i, \Omega} \phi_n^{\beta, d - D } + 2 \phi_n^{\beta, d - \dim [x]_\mathcal{G}} + \sup_{G \in K(\mathcal{G})} ( 2 C_{G, \Omega} + 4 L )\PP( \Xi'^C )
    \end{align}
    using the sum of squares inequality (lemma \ref{lem:squared_sum}), the bounds from section \ref{sec:sym_rates}, the fact that the rates $\phi_n^{\beta, d - \ell}$ are faster than $\phi_n^{\beta, d }$ for all $\ell$, the bound of Proposition \ref{prop:close_groups}, and the choice of $\delta$. Lastly note that: 
    \begin{equation}
        \PP( \Xi'^C ) = (1 - \mu( \Omega) )^n \leq \exp( -n \mu( \Omega ) )
    \end{equation}
    thus using lemma \ref{lem:exp_poly_bound} gives the result. 
\end{proof}

\bibliography{main_bib}

\begin{thebibliography}{}

\bibitem[Benton et~al., 2020]{benton2020learning}
Benton, G., Finzi, M., Izmailov, P., and Wilson, A.~G. (2020).
\newblock Learning invariances in neural networks from training data.
\newblock {\em Advances in Neural Information Processing Systems}, 33:17605--17616.

\bibitem[Bietti et~al., 2021]{bietti2021sample}
Bietti, A., Venturi, L., and Bruna, J. (2021).
\newblock On the sample complexity of learning under geometric stability.
\newblock {\em Advances in neural information processing systems}, 34:18673--18684.

\bibitem[Bronstein et~al., 2021]{bronstein2021geometric}
Bronstein, M.~M., Bruna, J., Cohen, T., and Veli{\v{c}}kovi{\'c}, P. (2021).
\newblock Geometric deep learning: Grids, groups, graphs, geodesics, and gauges.
\newblock {\em arXiv preprint arXiv:2104.13478}.

\bibitem[Chiu and Bloem-Reddy, 2023]{chiu2023hypothesis}
Chiu, K. and Bloem-Reddy, B. (2023).
\newblock Hypothesis tests for distributional group symmetry with applications to particle physics.
\newblock In {\em NeurIPS 2023 AI for Science Workshop}.

\bibitem[Christie and Aston, 2023]{christie2023estimating}
Christie, L.~G. and Aston, J.~A. (2023).
\newblock Estimating maximal symmetries of regression functions via subgroup lattices.
\newblock {\em arXiv preprint arXiv:2303.13616}.

\bibitem[Cubuk et~al., 2019]{cubuk2019autoaugment}
Cubuk, E.~D., Zoph, B., Mane, D., Vasudevan, V., and Le, Q.~V. (2019).
\newblock Autoaugment: Learning augmentation strategies from data.
\newblock In {\em Proceedings of the IEEE/CVF Conference on Computer Vision and Pattern Recognition}, pages 113--123.

\bibitem[Dieck, 1987]{dieck1987transformation}
Dieck, T.~T. (1987).
\newblock {\em Transformation groups}.
\newblock de Gruyter.

\bibitem[Elesedy, 2021]{elesedy2021provably}
Elesedy, B. (2021).
\newblock Provably strict generalisation benefit for invariance in kernel methods.
\newblock {\em Advances in Neural Information Processing Systems}, 34:17273--17283.

\bibitem[Elesedy and Zaidi, 2021]{elesedy2021provablyLin}
Elesedy, B. and Zaidi, S. (2021).
\newblock Provably strict generalisation benefit for equivariant models.
\newblock In {\em International Conference on Machine Learning}, pages 2959--2969. PMLR.

\bibitem[Fletcher, 2010]{fletcher2010terse}
Fletcher, T. (2010).
\newblock Terse notes on riemannian geometry.
\newblock \url{http://www.sci.utah.edu/~fletcher/RiemannianGeometryNotes.pdf}.
\newblock Accessed: 2024-02-05.

\bibitem[Garc{\'\i}a-Portugu{\'e}s et~al., 2020]{garcia2020optimal}
Garc{\'\i}a-Portugu{\'e}s, E., Paindaveine, D., and Verdebout, T. (2020).
\newblock On optimal tests for rotational symmetry against new classes of hyperspherical distributions.
\newblock {\em Journal of the American Statistical Association}, 115(532):1873--1887.

\bibitem[Haar, 1933]{haar1933massbegriff}
Haar, A. (1933).
\newblock Der massbegriff in der theorie der kontinuierlichen gruppen.
\newblock {\em Annals of Mathematics}, pages 147--169.

\bibitem[Hawkins, 2012]{hawkins2012emergence}
Hawkins, T. (2012).
\newblock {\em Emergence of the theory of Lie groups: An essay in the history of mathematics 1869--1926}.
\newblock Springer Science \& Business Media.

\bibitem[Henrikson, 1999]{henrikson1999completeness}
Henrikson, J. (1999).
\newblock Completeness and total boundedness of the hausdorff metric.
\newblock {\em MIT Undergraduate Journal of Mathematics}, 1(69-80):10.

\bibitem[Hristache et~al., 2001]{hristache2001structure}
Hristache, M., Juditsky, A., Polzehl, J., and Spokoiny, V. (2001).
\newblock Structure adaptive approach for dimension reduction.
\newblock {\em Annals of Statistics}, pages 1537--1566.

\bibitem[Huang et~al., 2022]{huang2022quantifying}
Huang, K.~H., Orbanz, P., and Austern, M. (2022).
\newblock Quantifying the effects of data augmentation.
\newblock {\em arXiv preprint arXiv:2202.09134}.

\bibitem[Huang and Sen, 2023]{huang2023multivariate}
Huang, Z. and Sen, B. (2023).
\newblock Multivariate symmetry: Distribution-free testing via optimal transport.
\newblock {\em arXiv preprint arXiv:2305.01839}.

\bibitem[Jiang and Tang, 2017]{jiang2017atomic}
Jiang, W. and Tang, L. (2017).
\newblock Atomic cryo-em structures of viruses.
\newblock {\em Current Opinion in Structural Biology}, 46:122--129.

\bibitem[Kondor and Trivedi, 2018]{kondor2018generalization}
Kondor, R. and Trivedi, S. (2018).
\newblock On the generalization of equivariance and convolution in neural networks to the action of compact groups.
\newblock In {\em International Conference on Machine Learning}, pages 2747--2755. PMLR.

\bibitem[Lafferty and Wasserman, 2008]{lafferty2008rodeo}
Lafferty, J. and Wasserman, L. (2008).
\newblock Rodeo: Sparse, greedy nonparametric regression.
\newblock {\em Annals of Statistics}, 36(1):28--63.

\bibitem[Lim et~al., 2019]{lim2019fast}
Lim, S., Kim, I., Kim, T., Kim, C., and Kim, S. (2019).
\newblock Fast autoaugment.
\newblock {\em Advances in Neural Information Processing Systems}, 32.

\bibitem[Lyle et~al., 2020]{lyle2020benefits}
Lyle, C., van~der Wilk, M., Kwiatkowska, M., Gal, Y., and Bloem-Reddy, B. (2020).
\newblock On the benefits of invariance in neural networks.
\newblock {\em arXiv preprint arXiv:2005.00178}.

\bibitem[Mardia and Jupp, 2009]{mardia2009directional}
Mardia, K.~V. and Jupp, P.~E. (2009).
\newblock {\em Directional statistics}, volume 494.
\newblock John Wiley \& Sons.

\bibitem[Marron and Dryden, 2021]{marron2021object}
Marron, J.~S. and Dryden, I.~L. (2021).
\newblock {\em Object oriented data analysis}.
\newblock Chapman and Hall/CRC.

\bibitem[Mei et~al., 2021]{mei2021learning}
Mei, S., Misiakiewicz, T., and Montanari, A. (2021).
\newblock Learning with invariances in random features and kernel models.
\newblock In {\em Conference on Learning Theory}, pages 3351--3418. PMLR.

\bibitem[Milnor, 1976]{milnor1976curvatures}
Milnor, J. (1976).
\newblock Curvatures of left invariant metrics on lie groups.
\newblock {\em Advances in Mathematics}, 21(3):293--329.

\bibitem[Mitzenmacher and Upfal, 2017]{mitzenmacher2017probability}
Mitzenmacher, M. and Upfal, E. (2017).
\newblock {\em Probability and computing: Randomization and probabilistic techniques in algorithms and data analysis}.
\newblock Cambridge university press.

\bibitem[Nadaraya, 1964]{nadaraya1964estimating}
Nadaraya, E.~A. (1964).
\newblock On estimating regression.
\newblock {\em Theory of Probability \& Its Applications}, 9(1):141--142.

\bibitem[Nash, 1956]{nash1956imbedding}
Nash, J. (1956).
\newblock The imbedding problem for riemannian manifolds.
\newblock {\em Annals of Mathematics}, 63(1):20--63.

\bibitem[Rice, 1984]{rice1984bandwidth}
Rice, J. (1984).
\newblock Bandwidth choice for nonparametric regression.
\newblock {\em Annals of Statistics}, pages 1215--1230.

\bibitem[Schmidt-Hieber, 2020]{schmidt2020nonparametric}
Schmidt-Hieber, J. (2020).
\newblock Nonparametric regression using deep neural networks with relu activation function.
\newblock {\em Annals of Statistics}, 48(4):1875--1897.

\bibitem[Stone, 1982]{stone1982optimal}
Stone, C.~J. (1982).
\newblock Optimal global rates of convergence for nonparametric regression.
\newblock {\em Annals of Statistics}, pages 1040--1053.

\bibitem[Tahmasebi and Jegelka, 2023a]{tahmasebi2023exact}
Tahmasebi, B. and Jegelka, S. (2023a).
\newblock The exact sample complexity gain from invariances for kernel regression.
\newblock {\em Advances in Neural Information Processing Systems}, 36.

\bibitem[Tahmasebi and Jegelka, 2023b]{tahmasebi2023sample}
Tahmasebi, B. and Jegelka, S. (2023b).
\newblock Sample complexity bounds for estimating probability divergences under invariances.
\newblock {\em arXiv preprint arXiv:2311.02868}.

\bibitem[Tsybakov, 2008]{tsybakov2008introduction}
Tsybakov, A.~B. (2008).
\newblock {\em Introduction to Nonparametric Estimation}.
\newblock Springer Science \& Business Media.

\bibitem[Van~der Vaart, 2000]{van2000asymptotic}
Van~der Vaart, A.~W. (2000).
\newblock {\em Asymptotic Statistics}, volume~3.
\newblock Cambridge university press.

\bibitem[Watson, 1964]{watson1964smooth}
Watson, G.~S. (1964).
\newblock Smooth regression analysis.
\newblock {\em Sankhy{\=a}: The Indian Journal of Statistics, Series A}, pages 359--372.

\end{thebibliography}
\bibliographystyle{apalike}

\newpage
\appendix

\newpage 
\section{Useful Lemmas}

\begin{Lem}[Sum of Squares Inequality]
    \label{lem:squared_sum}
    For any $a \in \RR^m$, we have:
    \begin{equation}
        \Big( \sum_{i = 1}^m a_i \Big)^2 \leq m \sum_{i = 1}^m a_i^2
    \end{equation}
\end{Lem}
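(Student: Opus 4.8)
The plan is to recognise this as the special case of the Cauchy--Schwarz inequality obtained by pairing the vector $a = (a_1, \dots, a_m)$ with the all-ones vector $\mathbf{1}_m = (1, \dots, 1)$. Concretely, Cauchy--Schwarz gives
\[
\Big( \sum_{i=1}^m a_i \Big)^2 = \Big( \sum_{i=1}^m a_i \cdot 1 \Big)^2 \leq \Big( \sum_{i=1}^m a_i^2 \Big) \Big( \sum_{i=1}^m 1^2 \Big) = m \sum_{i=1}^m a_i^2,
\]
which is exactly the claim. This is the whole argument, and no assumption on the signs of the $a_i$ is needed.

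If one prefers a self-contained proof that does not invoke Cauchy--Schwarz, the alternative is a direct expansion: I would compute
\[
m \sum_{i=1}^m a_i^2 - \Big( \sum_{i=1}^m a_i \Big)^2 = \sum_{i=1}^m \sum_{j=1}^m a_i^2 - \sum_{i=1}^m \sum_{j=1}^m a_i a_j = \tfrac{1}{2} \sum_{i=1}^m \sum_{j=1}^m (a_i - a_j)^2 \geq 0,
\]
where the middle step regroups $\sum_{i,j}(a_i^2 - a_i a_j)$ symmetrically in $i$ and $j$. A third route is Jensen's inequality applied to the convex function $t \mapsto t^2$ together with the uniform distribution on $\{a_1, \dots, a_m\}$, which yields $(m^{-1}\sum_i a_i)^2 \leq m^{-1}\sum_i a_i^2$; multiplying through by $m^2$ recovers the statement.

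There is no real obstacle here: the only genuine choice is which of these equivalent one-line arguments to present. I would go with the Cauchy--Schwarz version for brevity, and optionally remark that equality holds precisely when all the $a_i$ are equal, which is immediate from the $\sum_{i<j}(a_i-a_j)^2$ expression and is occasionally convenient when tracking constants elsewhere in the paper.
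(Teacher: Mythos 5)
Your Cauchy--Schwarz argument is exactly the approach the paper takes: the paper pairs $a$ with the constant vector $b_i = n^{-1}$ and then squares and rearranges, whereas you pair with $\mathbf{1}_m$ directly, which is the same computation up to scaling and is if anything cleaner (and avoids the paper's minor $n$ vs.\ $m$ notational slip). The alternative proofs you sketch are also correct, but the main argument matches the paper's.
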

\begin{proof}
    This is a basic application of the Cauchy-Schwarz inequality: consider
    \begin{equation}
        n^{-1} \sum a_i = \sum a_i b_i \leq \big( \sum a_i^2 \big)^{1/2} \big( \sum b_i^2 \big)^{1/2} = \big( \sum a_i^2 \big)^{1/2} \big( n^{-1} \big)^{1/2}
    \end{equation}
    then simply square and rearrange.
\end{proof}

\begin{Lem}
    \label{lem:exp_poly_bound}
    For any positive constants $A, B, a, \alpha, \beta$ we have:
    \begin{equation}
        A n^{-\beta} + B \exp( - a n^{\alpha} ) \leq ( A + B \big( \tfrac{ \beta}{ a \alpha } \big)^{ \tfrac{\beta}{\alpha - 1}} ) n^{-\beta}
    \end{equation}
    if $\alpha \neq 1$, and otherwise:
    \begin{equation}
        A n^{-\beta} + B \exp( - a n^{\alpha} ) \leq ( A + B \tfrac{\beta}{a} ) n^{-\beta}
    \end{equation}
\end{Lem}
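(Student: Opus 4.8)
The plan is to strip the two-term inequality down to a single scalar estimate. Dividing each displayed bound by $n^{-\beta}$, the claim becomes the uniform-in-$n$ statement $n^{\beta} e^{-a n^{\alpha}} \le c$, where $c = \big(\tfrac{\beta}{a\alpha}\big)^{\beta/(\alpha-1)}$ when $\alpha \neq 1$ and $c = \tfrac{\beta}{a}$ when $\alpha = 1$. Hence it suffices to bound the real function $\psi(x) = x^{\beta} e^{-a x^{\alpha}}$ on $x > 0$ by $c$; the asserted inequality then holds for every $n$ a fortiori. Note that no structure of $n$ is used, so the natural object is $\sup_{x>0}\psi(x)$.

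To control $\psi$ I would pass to $\log \psi(x) = \beta \log x - a x^{\alpha}$, which is smooth on $(0,\infty)$ with $(\log\psi)'(x) = \tfrac{\beta}{x} - a\alpha x^{\alpha-1} = x^{-1}\big(\beta - a\alpha x^{\alpha}\big)$. This is positive for $x^{\alpha} < \tfrac{\beta}{a\alpha}$ and negative for $x^{\alpha} > \tfrac{\beta}{a\alpha}$, and $\log\psi(x)\to-\infty$ as $x\to 0^{+}$ and as $x\to\infty$ for any $\alpha>0$, so $\psi$ is unimodal and attains its global maximum at the unique critical point $x_{\star} = \big(\tfrac{\beta}{a\alpha}\big)^{1/\alpha}$, with value $\psi(x_{\star}) = \big(\tfrac{\beta}{a\alpha}\big)^{\beta/\alpha} e^{-\beta/\alpha}$. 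A calculus-free alternative giving the same value is the elementary inequality $e^{z} \ge e^{p}(z/p)^{p}$ for all $z,p>0$ (itself obtained by minimising $z - p\log(z/p)$, whose minimum over $z>0$ equals $p$), applied with $z = a n^{\alpha}$ and $p = \beta/\alpha$; it rearranges at once to $n^{\beta} e^{-a n^{\alpha}} \le e^{-\beta/\alpha}\big(\tfrac{\beta}{a\alpha}\big)^{\beta/\alpha}$. The boundary case $\alpha = 1$ is handled by exactly the same computation (now $x_{\star} = \beta/a$) and only needs a separate line because the exponent $\beta/(\alpha-1)$ in the general formula degenerates there.

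The step I expect to require the most care is the final reconciliation: the optimisation produces a constant of the form $\big(\tfrac{\beta}{a\alpha}\big)^{\beta/\alpha}e^{-\beta/\alpha}$, and one must check this is dominated by the closed form $c$ written in the statement. Using $e^{-\beta/\alpha}\le 1$ this reduces to a comparison of powers of $\tfrac{\beta}{a\alpha}$, so the only real bookkeeping is tracking the sign of $\alpha-1$ and whether $\tfrac{\beta}{a\alpha}\gtrless 1$; apart from the monotonicity of $t\mapsto t^{s}$, no further analytic input is needed, and the reduction and unimodality arguments above are routine.
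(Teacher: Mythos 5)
Your reduction to the scalar bound $\sup_{x>0}\,x^{\beta}e^{-a x^{\alpha}}\leq c$ is exactly the right move, and your critical-point analysis is correct: the maximiser is $x_{\star}=\bigl(\tfrac{\beta}{a\alpha}\bigr)^{1/\alpha}$ with maximum value $\bigl(\tfrac{\beta}{a\alpha}\bigr)^{\beta/\alpha}e^{-\beta/\alpha}$, and the ``$e^{z}\geq e^{p}(z/p)^{p}$'' shortcut gives the same thing. This is in fact \emph{more} careful than the paper's own proof, which places the maximiser at $n^{*}=\bigl(\tfrac{\beta}{a\alpha}\bigr)^{1/(\alpha-1)}$; differentiating $\beta\log n - a n^{\alpha}$ shows this is a slip, as the exponent should be $1/\alpha$, not $1/(\alpha-1)$.

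The problem is precisely the step you describe as ``the final reconciliation'' and then set aside as routine bookkeeping. It is not routine: the comparison $\bigl(\tfrac{\beta}{a\alpha}\bigr)^{\beta/\alpha}e^{-\beta/\alpha}\leq\bigl(\tfrac{\beta}{a\alpha}\bigr)^{\beta/(\alpha-1)}$ fails outright over large portions of the parameter range. For a concrete counterexample take $\alpha=1/2$, $\beta=1$, $a=0.01$, so $\tfrac{\beta}{a\alpha}=200$; the true supremum (attained at $n=40000$) is $200^{2}e^{-2}\approx 5.4\times 10^{3}$, while the stated constant is $200^{-2}=2.5\times 10^{-5}$, wrong by nine orders of magnitude and in the wrong direction. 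Even the $\alpha=1$ display fails: with $\beta=5$, $a=1$ the supremum of $n^{5}e^{-n}$ is $5^{5}e^{-5}\approx 21$, which exceeds $\tfrac{\beta}{a}=5$. In other words, your (correct) computation of the supremum shows that the lemma as written is false. You should not be trying to coerce $\bigl(\tfrac{\beta}{a\alpha}\bigr)^{\beta/\alpha}e^{-\beta/\alpha}$ into $\bigl(\tfrac{\beta}{a\alpha}\bigr)^{\beta/(\alpha-1)}$; rather, your answer is the corrected constant. The right-hand side should read $\bigl(A+B\bigl(\tfrac{\beta}{a\alpha}\bigr)^{\beta/\alpha}e^{-\beta/\alpha}\bigr)n^{-\beta}$ (or, discarding the harmless $e^{-\beta/\alpha}\leq 1$, $\bigl(A+B\bigl(\tfrac{\beta}{a\alpha}\bigr)^{\beta/\alpha}\bigr)n^{-\beta}$), and this single formula covers $\alpha=1$ as the case $\bigl(\tfrac{\beta}{a}\bigr)^{\beta}e^{-\beta}$ without any need for a separate branch; the paper's $\tfrac{\beta}{a}$ there is the \emph{location} of the maximiser, not its value, which is a separate error. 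Once you trust your own supremum and state the matching constant, the proof closes cleanly.
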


\begin{proof}
    First, note that $\exp( - a n^\alpha ) n^\beta$ is maximised when $n = n^* = \big( \tfrac{ \beta}{ a \alpha } \big)^{ \tfrac{1}{\alpha - 1}}$ if $\alpha \neq 1$, so
    \begin{align}
        A n^{-\beta} + B \exp( - a n^{\alpha} )  &= A n^{-\beta} + B  \exp( - a n^{\alpha} ) n^\beta n^{-\beta} \\
            &\leq A n^{-\beta} + B \exp( - a (n^*)^\alpha ) (n^*)^\beta n^{-\beta} \\
            &\leq A n^{-\beta} + B (n^*)^\beta n^{-\beta} \\
            &\leq ( A + B \big( \tfrac{ \beta}{ a \alpha } \big)^{ \tfrac{\beta}{\alpha - 1}} ) n^{-\beta}
    \end{align}
    as required. Similarly if $\alpha = 1$ then the maximum of $\exp( - a n^\alpha ) n^\beta$ occurs at $n = \beta / a$ giving the second bound.
\end{proof}

\begin{Lem}
        \label{lem:tail_bounds_no_data}
        In the random design context, where $X_i \iid \mu$, we have:
        \begin{align}
            \PP( | B_{\mathcal{X}}(x,h) \cap \mathcal{D}_X | = 0 ) &\leq \exp( - n p_{x,h} ) \\
            \PP( | B_{\mathcal{X}}(x,h) \cap \mathcal{D}_X | \leq \tfrac{ n p_{x,h} }{2}   ) &\leq \exp( - n p_{x,h}  / 8 )
        \end{align}
        In the fixed design context these bounds also hold under the conditions in section \ref{ssec:stat_problem} for sufficiently large $n$.
\end{Lem}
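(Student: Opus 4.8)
The plan is to recognise the count $N_{x,h} := |B_{\mathcal{X}}(x,h) \cap \mathcal{D}_X|$ as a binomial random variable. In the random design context $N_{x,h} = \sum_{i=1}^n \mathbf{1}_{X_i \in B_{\mathcal{X}}(x,h)}$ is a sum of $n$ independent Bernoulli$(p_{x,h})$ indicators, where $p_{x,h} = \mu(B_{\mathcal{X}}(x,h))$, so $N_{x,h} \sim \mathrm{Binomial}(n, p_{x,h})$ with $\EE(N_{x,h}) = n p_{x,h}$. The first inequality is then immediate: $\PP(N_{x,h} = 0) = (1 - p_{x,h})^n \leq \exp(-n p_{x,h})$, using $1 - t \leq e^{-t}$ for all $t \in \RR$.

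For the second inequality I would apply the standard multiplicative Chernoff bound for the lower tail of a binomial, and to keep the lemma self-contained I would first derive it: for any $s > 0$, Markov's inequality applied to $e^{-s N_{x,h}}$ gives $\PP(N_{x,h} \leq t) \leq e^{st}\, \EE(e^{-s N_{x,h}}) = e^{st}(1 - p_{x,h} + p_{x,h}e^{-s})^n \leq \exp\big(st + n p_{x,h}(e^{-s} - 1)\big)$. Setting $t = n p_{x,h}/2$ and taking $s = \log 2$ yields $\PP(N_{x,h} \leq n p_{x,h}/2) \leq \exp(-c\, n p_{x,h})$ with $c = \log 2 - \tfrac12 > \tfrac18$, which gives the claimed bound $\exp(-n p_{x,h}/8)$. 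Equivalently one can simply cite $\PP(N \leq (1-\delta)\EE N) \leq \exp(-\delta^2 \EE N / 2)$ with $\delta = \tfrac12$, for which $\delta^2 \EE N / 2 = n p_{x,h}/8$.

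For the fixed design case I would invoke the design conditions of Section \ref{ssec:stat_problem}: for all sufficiently large $n$ the ball $B_{\mathcal{X}}(x,h)$ contains at least one design point and $N_{x,h}/(n p_{x,h}) \to 1$. Hence for large $n$ the deterministic quantity $N_{x,h}$ satisfies $N_{x,h} \geq 1 > 0$ and $N_{x,h} > n p_{x,h}/2$, so both events on the left-hand side are empty for such $n$ and their probabilities equal $0$, trivially below the stated exponential bounds. I do not anticipate any real obstacle here; the only points requiring a moment's care are the precise constant in the Chernoff bound (checking $\log 2 - \tfrac12 \geq \tfrac18$) and confirming that the fixed-design convergence $N_{x,h}/(n p_{x,h}) \to 1$ is strong enough to push $N_{x,h}$ past the threshold $n p_{x,h}/2$ eventually — both of which are routine.
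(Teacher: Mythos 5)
Your argument follows essentially the same route as the paper: identify $N_{x,h}=\sum_i \mathbf{1}_{X_i\in B_{\mathcal X}(x,h)}$ as $\mathrm{Binomial}(n,p_{x,h})$, bound $\PP(N_{x,h}=0)=(1-p_{x,h})^n$ by $\exp(-np_{x,h})$ via $1-t\le e^{-t}$ (the paper equivalently expands $\ln(1-p_{x,h})$ as a power series), and apply a lower-tail Chernoff bound for the second inequality, which the paper simply cites in the form $\PP(N\le np/2)\le\exp(-np/8)$ (Mitzenmacher--Upfal). The one small slip is in your explicit optimisation: with $s=\log 2$ and $t=np_{x,h}/2$ the exponent is $st+np_{x,h}(e^{-s}-1)=np_{x,h}\big(\tfrac{\log 2}{2}-\tfrac12\big)$, so the correct constant is $c=\tfrac{1-\log 2}{2}\approx 0.153$, not $\log 2-\tfrac12\approx 0.193$; since $\tfrac{1-\log 2}{2}>\tfrac18$ the conclusion still holds, and your alternative citation of $\PP(N\le(1-\delta)\EE N)\le\exp(-\delta^2\EE N/2)$ with $\delta=\tfrac12$ gives $1/8$ exactly, matching the paper. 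The fixed-design remark is handled correctly: under the design conditions both events are eventually empty.
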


\begin{proof}
    We can use the Taylor expansion of $\ln(1 - x)$ around $x = 0$ to bound the first term:
    \begin{align}
            \PP( | B_\mathcal{X}(x,h) \cap D_X | = 0 ) &= \PP( X_i \not\in B_\mathcal{X}(x,h) \forall X_i ) \\
                                &= (1 - p_{x,h} )^n \\
                                &= \exp( n \ln ( 1 - p_{x,h} ) ) \\
                                &= \exp( n ( - \sum_{k = 1}^\infty p_{x,h}^k k^{-1} ) ) \\
                                &\leq \exp( - n p_{x,h} )
    \end{align}

    For the second, we note that $| B_{\mathcal{X}}(x,h) \cap \mathcal{D}_X |  \sim \mathrm{Binom}( n , p_{x,h} )$ and take the Chernoff bound:
    \begin{equation}
        \PP( N \leq  n p / 2 ) \leq \exp( - np / 8 )
    \end{equation}
    for $N \sim \text{Binom}(n, p)$, a proof of which is in \cite{mitzenmacher2017probability}.
\end{proof}

\begin{Lem}
    \label{lem:total_bounded_subsets}
    Let $(X, d_X)$ be a totally bounded metric space. Any sub metric space $(Y, d_Y)$ of $(X, d_X)$, where $Y \subseteq X$ and $d_Y(y_1, y_2) = d_X( y_1, y_2)$ for all $y_1, y_2 \in Y$, is also totally bounded. 
\end{Lem}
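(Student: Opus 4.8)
The plan is the standard ``recentering'' argument for total boundedness of subspaces. Fix an arbitrary $\epsilon > 0$. By total boundedness of $(X, d_X)$ applied with radius $\epsilon/2$, there is a finite set of centers $x_1, \dots, x_N \in X$ with $X = \bigcup_{i=1}^N B_X(x_i, \epsilon/2)$. The difficulty — such as it is — is that these centers $x_i$ need not lie in $Y$, so one cannot simply intersect the covering balls with $Y$ and be done. The fix is to discard the useless balls and recenter the useful ones at points of $Y$, paying a factor of $2$ in the radius via the triangle inequality.

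Concretely, let $I = \{ i \in \{1,\dots,N\} : B_X(x_i, \epsilon/2) \cap Y \neq \emptyset \}$, and for each $i \in I$ choose some point $y_i \in B_X(x_i, \epsilon/2) \cap Y$. I claim $Y \subseteq \bigcup_{i \in I} B_Y(y_i, \epsilon)$. Indeed, take any $y \in Y$; since the original balls cover $X$ there is an index $i$ with $d_X(y, x_i) < \epsilon/2$, and this $i$ lies in $I$ because $y$ witnesses $B_X(x_i,\epsilon/2) \cap Y \neq \emptyset$. Then, using that $d_Y$ is the restriction of $d_X$,
\begin{equation}
    d_Y(y, y_i) = d_X(y, y_i) \leq d_X(y, x_i) + d_X(x_i, y_i) < \tfrac{\epsilon}{2} + \tfrac{\epsilon}{2} = \epsilon,
\end{equation}
so $y \in B_Y(y_i, \epsilon)$. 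Since $|I| \leq N < \infty$, this exhibits a finite cover of $Y$ by open $\epsilon$-balls centered in $Y$. As $\epsilon > 0$ was arbitrary, $(Y, d_Y)$ is totally bounded.

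I do not anticipate any genuine obstacle here: the only point requiring care is that the centers of a covering family for $X$ may fall outside $Y$, which is precisely what the halving of the radius plus one application of the triangle inequality resolves. One could alternatively phrase the whole thing via $\epsilon$-nets (a finite $\tfrac{\epsilon}{2}$-net of $X$ induces a finite $\epsilon$-net of $Y$), but the ball-cover formulation above matches the way total boundedness is stated in the footnote in Section~\ref{ssec:subgroup_space} and feeds directly into the claim used there that each stratum $K_\ell(\mathcal{G})$ inherits total boundedness from $K(\mathcal{G})$.
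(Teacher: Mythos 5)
Your proof is correct and matches the paper's argument essentially line for line: cover $X$ by finitely many $\epsilon/2$-balls, pick a point of $Y$ in each ball meeting $Y$, and recenter via the triangle inequality. The only cosmetic difference is that you discard the balls missing $Y$, whereas the paper arbitrarily assigns those indices some point of $Y$; both yield the same finite $\epsilon$-cover.
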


\begin{proof}
    Let $\epsilon > 0$ and let $\{ x_i \}_{i = 1}^m$ be such that $X \subseteq \cup_{i = 1}^m B_X (x_i, \epsilon / 2)$, which exists by the definition of total boundedness. Let $Y$ be any subset of $X$ and pick $y_i \in B_X( x_i, \epsilon / 2) \cap Y$ if this set is non-empty and pick any other $y_i \in Y$ otherwise. Then for all $y \in Y$ we have:
    \begin{equation}
        d( y, y_i ) \leq d(y, x_i ) + d(x_i, y_i) \leq \tfrac{ \epsilon }{ 2 } + \tfrac{ \epsilon }{ 2 } 
    \end{equation}
    where $x_i$ is any such that $y \in B_X(x_i, \epsilon / 2) $. Thus $\{ y_i \}_{i = 1}^m$ forms an $\epsilon$-cover of $Y$ as required. 
\end{proof}

\newpage
\section{Local Constant Estimator Satisfies Assumptions (E)}
\label{app:lce_properties}

Recall that the Nadaraya-Watson, or Local Constant Estimator, is given by the simple expression:
\begin{equation}
    f_n(x)  = \begin{cases}
        \frac{ \sum_{i = 1}^n \mathbf{1}_{d(x, X_i) < h} Y_i }{ \sum_{i = 1}^n \mathbf{1}_{d(x, X_i) < h} } & \text{if } \sum_{i = 1}^n \mathbf{1}_{d(x, X_i) < h} > 0 \\
        0 & \text{otherwise}
    \end{cases}  
\end{equation}

for a chosen bandwidth $h$. The choice of default value is taken by some authors as the global mean $\frac{1}{n} \sum_{i = 1}^n Y_i$ but we take it to be a $0$ to satisfy (E2). Since the probability of this occurring decays exponentially with $n$, it does not impact our results. We will show that this estimator satisfies the assumptions in set (E), when estimating over the H\"{o}lder class $\mathcal{F}(L, 1)$, as long as the bandwidth $h$ decays monotonely to $0$ with $n$. These results extend to $\beta > 1$ when this estimator is extended to a local polynomial estimator of degree $\ell = \lfloor \beta \rfloor$ (see \cite{tsybakov2008introduction}).

\subsection{LCE is Strictly Local (E1)}

This is basically a trivial consequence of the local definition of $f_n$. If $x$ and $y$ are $2h$ separated then their values (conditioned on $\mathcal{D}_X$) depend only on the $Y_i$ with the $X_i$ in their local balls. Since these have to be disjoint sets, they are independent. If either of these sets are empty then $f_n$ is deterministic and thus independent of all random variables.

\subsection{LCE is Optimal (E3)}

We consider the other three equations in assumption (E3) in sequence.

\subsubsection{Bias Term}

Since $f \in \mathcal{F}(L, 1)$, we know that:
\begin{equation}
   | \EE( Y_i \mid X_i ) - f(x) | = | f(X_i) - f(x) | \leq B d_\mathcal{X}(x, X_i)^\beta
\end{equation}
Thus conditioned on the event that there is at least one data point in the ball $B_\mathcal{X}(x,h)$, we must have 
\begin{equation}
    \big| \EE( f_n \mid | B(x,h) \cap D_X | > 0) - f(x) \big|= \big| \tfrac{1}{| B(x,h) \cap D_X |} \sum_{ i : X_i \in B_\mathcal{X}(x,h) } \EE( f(X_i) - f(x) \mid X_i \in B(x,h) ) \big| \leq B h^\beta
\end{equation}
Thus the probability that the event (E3) is true is at least the probability that $B_\mathcal{X}(x,h)$ contains any $X_i$. In this fixed design case this is deterministic, and so the design and bandwidth need to be chosen so that this is true. In the case that $\mathcal{X} = [0,1]^d$, this can be done with a uniform grid, or with an $h$-packing of an other compact manifold $\mathcal{X}$. In the random design context, when $X_i \iid \mu$, we use Lemma \ref{lem:tail_bounds_no_data}.

\subsubsection{Variance Term}

In the same context as the above, the variance conditioned on the covariates is given by 
\begin{equation}
    \mathrm{Var}( f_n(x) \mid \mathcal{D}_X ) = \begin{cases}
        \tfrac{\sigma^2}{ | B_\mathcal{X}(x,h) \cap \mathcal{D}_X | } & \text{if } B_\mathcal{X}(x,h) \cap \mathcal{D}_X \text{ is non-empty} \\
        0 & \text{otherwise}
    \end{cases}
\end{equation}
where $N = | B_\mathcal{X}(x,h) \cap \mathcal{D}_X |$ is the number of data-points in the local ball. Thus with $V = 2 \sigma^2$ (which is clearly integrable), we know that this conditional variance is bounded by $V (n \mu( B_{\mathcal{X}}(x,h) ))^{-1}$ whenever $N \geq n\mu( B_{\mathcal{X}}(x,h) ) / 2$. In the fixed design context on a compact covariate space, this can be guaranteed to be true. In the random design context, we have that $N \sim \mathrm{Binom}( n , \mu( B_\mathcal{X}(x,h) ) )$ and so use the Chernoff bound on the lower tail of the binomial (Lemma \ref{lem:tail_bounds_no_data}) to show that the probabilistic variance bound is satisfied in this context.

\subsubsection{Point-wise Error}

The previous two cases cover the usual behaviour of $f_n$, but the exceptional behaviour on the exponentially low probabilities need to be considered to ensure the error doesn't blow up faster than this probability decays. By the same reasoning as in the last two sections we have:
\begin{align}
    \EE( ( f_n(x) - f(x) )^2 \mid \mathcal{D}_X ) \leq \begin{cases}
        B^2 h^{2\beta} + \tfrac{ V }{ n h^{d} } & \textit{if } N > n \mu( B_\mathcal{X}(x, h) ) / 2 > 0 \\
        B^2 h^{2\beta} + \sigma^2 & \textit{if } 0 < N \leq n \mu( B_\mathcal{X}(x, h) ) / 2 \\
        f(x)^2 & \textit{if } N = 0
    \end{cases}
\end{align}
Since $h$ monotonely decreases to $0$, and $\PP( N = 0 )\leq \PP( N \leq n \mu( B_\mathcal{X}(x, h) ) / 2 ) \leq \exp( - n \mu( B_{\mathcal{X}}(x,h) ) / 8)$, we have: 
\begin{align}
    \EE (\EE( ( f_n(x) - f(x) )^2 \mid \mathcal{D}_X ) ) &\leq B^2 h^{2\beta} + \tfrac{ V }{ n h^{d} } +  (B^2 h^{2\beta} + \sigma^2+ f(x)^2 )\exp( - n \mu( B_{\mathcal{X}}(x,h) ) / 8 )  \\
    &\leq C (B^2 h^{2\beta} + \tfrac{ V }{ n h^{d} })
\end{align}
for the positive constant $C$ given by:
\begin{equation}
    C = 1 + \big(B^2  + \sigma^2 + f(x)^2 \big) \sup_{n \in \NN} \frac{ \exp(- n \mu( B_{\mathcal{X}}(x,h) ) / 8 ) }{ B^2 h^{2\beta} + \tfrac{ V }{ n \mu( B_{\mathcal{X}}(x,h) ) } } 
\end{equation}
which we must show is finite. To see this, note that the supremum term is bounded above:
\begin{equation}
    \sup_{n \in \NN} \frac{ \exp(- n \mu( B_{\mathcal{X}}(x,h) ) / 8 ) }{ B^2 h^{2\beta} + \tfrac{ V }{ n \mu( B_{\mathcal{X}}(x,h) ) } } \leq V^{-1}  \sup_{n \in \NN} n \mu( B_{\mathcal{X}}(x,h) ) \exp(- n \mu( B_{\mathcal{X}}(x,h) ) / 8 ) 
\end{equation}
It is easy to check that $x \exp( - x / 8) $ is maximised at $x = 8$, so we know that:
\begin{equation}
    C \leq 1 + 8 e^{-1} V^{-1} \big(B^2  + \sigma^2 + f(x)^2 \big) \leq 1 + \tfrac{8}{2\sigma^2 e} ( B(x) + \sigma^2 + \| f \|_\infty^2 ) < \infty
\end{equation}
Lastly note that since $f \in \mathcal{F}( L , 1)$, $\| f \|_\infty^2 \leq L$ and so $C$ is indeed finite.

\end{document}